\newtheorem{proposition}{Proposition}[section]
\newtheorem{theorem}[proposition]{Theorem}
\newtheorem{corollary}[proposition]{Corollary}
\newtheorem{thmx}{Theorem}
\newtheorem{thmxprime}{Theorem}
\newtheorem*{theorem*}{Theorem}
\newtheorem*{proposition*}{Proposition}
\newtheorem*{lemma*}{Lemma}
\newtheorem*{corollary*}{Corollary}
\theoremstyle{definition}
\newtheorem{definition}[proposition]{Definition}
\newtheorem{remark}[proposition]{Remark}
\newcommand{\Int}{\operatorname{Int}}
\newcommand{\Hom}{\operatorname{Hom}}
\newcommand{\rk}{\operatorname{rk}}
\newcommand{\tb}{\operatorname{tb}}
\newcommand{\rot}{\operatorname{rot}}
\newcommand{\Q}{\mathbb{Q}}
\newcommand{\N}{\mathbb{N}}
\newcommand{\Z}{\mathbb{Z}}
\newcommand{\F}{\mathcal{F}}
\newcommand{\Fn}{\mathcal{F}_n}
\newcommand{\Pn}{\mathcal{P}_n}
\newcommand{\Nn}{\mathcal{N}_n}
\newcommand{\Fnpointfive}{\mathcal{F}_{n.5}}
\newcommand{\B}{\mathcal{B}}
\newcommand{\Bn}{\mathcal{B}_n}
\newcommand{\C}{\mathcal{C}}
\newcommand{\K}{\mathcal{K}}
\newcommand{\A}{\mathcal{A}}
\newcommand{\Arf}{\operatorname{Arf}}
\newcommand{\Bl}{\mathcal{B}\ell}
\DeclareMathOperator{\lk}{lk}
\newcommand{\into}{\hookrightarrow}
\numberwithin{equation}{section}
\begin{document}
\title[Linear independence of cables in the knot concordance group]{Linear independence of cables in the knot concordance group}

\author{Christopher W.\ Davis}
\address{University of Wisconsin--Eau Claire}
\email{daviscw@uwec.edu}
\urladdr{people.uwec.edu/daviscw}

\author{JungHwan Park}
\address{Max-Planck-Institut f\"{u}r Mathematik}
\email{jp35@mpim-bonn.mpg.de }
\urladdr{http://people.mpim-bonn.mpg.de/jp35/}

\author{Arunima Ray}
\address{Max-Planck-Institut f\"{u}r Mathematik}
\email{aruray@mpim-bonn.mpg.de }
\urladdr{http://people.mpim-bonn.mpg.de/aruray/}

\date{\today}

\subjclass[2010]{%
  57M27, 
  57N70, 
  57M25
}

\begin{abstract} 
We produce infinite families of knots $\{K^i\}_{i\ge 1}$ for which the set of cables $\{K^i_{p,1}\}_{i,p\ge 1}$ is linearly independent in the knot concordance group, $\C$.  We arrange that these examples lie arbitrarily deep in the solvable and bipolar filtrations of~$\C$, denoted by $\{\mathcal{F}_n\}$ and $\{\mathcal{B}_n\}$ respectively.  As a consequence, this result cannot be reached by any combination of algebraic concordance invariants, Casson-Gordon invariants, and Heegaard-Floer invariants such as $\tau$, $\varepsilon$, and $\Upsilon$.  
We give two applications of this result. First, for any $n\ge 0$, there exists an infinite family $\{K^i\}_{i\geq 1}$ such that for each fixed $i$, $\{K^i_{2^j,1}\}_{j\geq 0}$ is a basis for an infinite rank summand of $\mathcal{F}_n$ and $\{K^i_{p,1}\}_{i, p\geq 1}$ is linearly independent in $\mathcal{F}_{n}/\mathcal{F}_{n.5}$. Second, for any $n\ge1$, we give filtered counterexamples to Kauffman's conjecture on slice knots by constructing smoothly slice knots with genus one Seifert surfaces where one derivative curve has nontrivial Arf invariant and the other is nontrivial in both $\F_n/\F_{n.5}$ and $\B_{n-1}/\B_{n+1}$. We also give examples of smoothly slice knots with genus one Seifert surfaces such that one derivative has nontrivial Arf invariant and the other is topologically slice but not smoothly slice.  
\end{abstract}
\maketitle

\section{Introduction}
Two knots are said to be (\emph{smoothly}) \emph{concordant} if they cobound a smooth annulus in~$S^3\times [0,1]$. The set of smooth concordance classes of knots forms an abelian group called the (\emph{smooth}) \emph{knot concordance group}, denoted by $\C$, under the connected sum operation. This group has been the subject of much study since its introduction by Fox and Milnor in~\cite{Fox-Milnor:1966-1}. A knot that is concordant to the unknot, or equivalently, bounds a smoothly embedded disk in $B^4$, is called a (\emph{smoothly}) \emph{slice} knot. There is a parallel, weaker notion of \emph{topological concordance} and \emph{topologically slice} knots, where the annuli and disks are required to be locally flat rather than smooth. 

Let $p,q$ be relatively prime integers and let $T_{p,q}$ denote the $(p,q)$ torus knot. Given a knot~$K$, the $(p,q)$ cable of $K$, denoted $K_{p,q}$, is obtained as the satellite of $K$ with pattern~$T_{p,q}$. In our notation, $K_{p,q}$ winds $p$ times in the longitudinal direction and $q$ times in the meridional direction. Observe that $K_{1,1}$ is isotopic to $K$. It is straightforward to see that cabling gives a well-defined function on $\C$ for any fixed $p,q$.  

From~\cite{Litherland:1979-1, Livingston-Melvin:1985-1}, we know that $\sigma_\omega(K_{p,q})=\sigma_\omega(T_{p,q})+\sigma_{\omega^p}(K)$ for any relatively prime integers $p$ and $q$, any knot $K$, and any $\omega$ on the unit circle away from roots of Alexander polynomials. For $\lvert p \rvert, \lvert q\rvert \geq 2$, this formula can be used to show that~$K$ and~$K_{p,q}$ are not concordant for any knot $K$. Indeed, $(p,q)$ cabling in general need not even preserve sliceness, since the nontrivial torus knots arise as cables of the unknot and are not slice. In contrast, $(p,1)$ cabling is more subtle. For instance, if $K$ is a slice knot, the knot~$K_{p,1}$ is slice for any $p$. That is, $K$ and $K_{p,1}$ are concordant. On the other hand, if $K$ has nontrivial signature function, for example, if $K$ is the right-handed trefoil, it is easy to see that~$\{K_{p,1}\}_{p\ge 1}$ is linearly independent. A similar conclusion may be drawn for many knots with non-vanishing Casson-Gordon sliceness obstructions~\cite{Kim:2005, Litherland:1984-1} or non-vanishing~$\Upsilon$-invariant~\cite{Ozsvath-Stipsicz-Szabo:2017-1, Feller-Park-Ray:2016-1,Chen:2016-1,Kim-Park:2016-1}. (For a variety of other results on the concordance of cables see also~\cite{Kawauchi:1980-1, Cochran-Franklin-Hedden-Horn:2013-1,Hedden:2009-1}.) This gives rise to the natural question: What can be said about the linear independence of $\{K_{p,1}\}_{p\ge 1}$ when $K$ is not slice, but all of these invariants vanish?  In this paper, we answer this question by producing infinite families of knots whose $(p,1)$ cables are linearly independent but which are too subtle to be detected by any of the tools mentioned above. More precisely, our examples will lie deep within certain filtrations of $\C$, which we now recall.  

The \emph{solvable filtration} of $\C$~\cite{Cochran-Orr-Teichner:2003-1}, 
\[
\cdots \subset \mathcal{F}_{n+1} \subset \mathcal{F}_{n.5} \subset \mathcal{F}_{n} \subset \cdots \subset \mathcal{F}_0 \subset \C,
\]
provides an infinite sequence of nontrivial sliceness obstructions~\cite{Cochran-Orr-Teichner:2003-1,Cochran-Orr-Teichner:2004-1,Cochran-Teichner:2007-1,Jiang:1981-1,Livingston:1999-2,Cochran-Harvey-Leidy:2009-1,Cochran-Harvey-Leidy:2009-03}. In particular, its lower levels encapsulate several classical concordance invariants. For example, a knot $K$ lies in $\F_0$ if and only if $\Arf(K)=0$; similarly, $K$ lies in $\F_{0.5}$ if and only if it is algebraically slice. Thus, any knot in $\F_{0.5}$ has vanishing signature function.  Additionally, every knot in $\F_{1.5}$ has vanishing Casson-Gordon sliceness obstructions.   
We remark in passing that if $K$ is topologically slice, $K\in \bigcap \F_n$ (however, the converse is open).  Moreover, there is an analogue of the solvable filtration, denoted $\{\Fn^\text{top}\}$, for the topological concordance group, and it is known that~$\Fn^\text{top}/\Fnpointfive^\text{top} \cong \Fn/\Fnpointfive$ for all $n$~\cite[p.\ 1423]{Cochran-Harvey-Leidy:2009-1}. Thus, the solvable filtration gives a language to organize knots for which topological concordance is increasingly subtle.  

In \cite{Cochran-Harvey-Horn:2013-1} Cochran-Harvey-Horn define a similar filtration,

\[
\cdots \subset \mathcal{B}_{n+1} \subset \mathcal{B}_{n}  \subset \cdots \subset \mathcal{B}_0 \subset \C,
\] called the \emph{bipolar filtration}, specifically geared towards the study of the smooth knot concordance group. This filtration has proved particularly useful in the study of smooth concordance classes of topologically slice knots~\cite{Cochran-Horn:2015-1, Cha-Powell:2014-1, Cha-Kim:2017-1}. For the smooth concordance group, it was shown in~\cite{Cochran-Harvey-Horn:2013-1} that the bipolar filtration provides an infinite sequence of nontrivial sliceness obstructions, and in particular, several strong concordance invariants, including the Heegaard-Floer invariants $\tau$~\cite{Ozsvath-Szabo:2003-1} and $\varepsilon$~\cite{Hom:2014-1}, as well as Rasmussen's~$s$-invariant~\cite{Rasmussen:2010}, vanish on $\B_0$. Further, we know from~\cite{Cochran-Harvey-Horn:2013-1, Ozsvath-Stipsicz-Szabo:2017-1, Ni-Wu:2015-1, Hom-Wu:2016-1} that the $\nu^+$-invariant~\cite{Hom-Wu:2016-1} and $\Upsilon$-invariant~\cite{Ozsvath-Stipsicz-Szabo:2017-1} also vanish on $\mathcal{B}_0$. Thus, the bipolar  filtration gives a language to organize knots for which smooth concordance is increasingly subtle. 

In the present paper, we study the effect of cabling on $\C$ through the lens of the solvable and bipolar filtrations.  It is easy to see that $(p,1)$ cabling is a well-defined operation on~$\F_n$ and~$\B_n$ for any $n$ and $p$ (Proposition~\ref{prop:cablefiltration}). The following is our main result. 

\newtheorem*{thm:cables-indep}{Theorem~\ref{thm:cables-indep}}
\begin{thmx}~\label{thm:cables-indep}
For any $n\ge1$, there exists an infinite family of knots $\{K^i\}_{i\geq 1}\subset \F_n\cap \B_{n-1}$, such that the set of cables $\{K^i_{p,1}\}_{i,p\geq 1}$ is linearly independent in $\Fn/\Fnpointfive$ and in $\B_{n-1}/\B_{n+1}$.
\end{thmx}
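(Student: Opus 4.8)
The plan is to construct the knots $K^i$ by iterated infection (generalised satellite operations), in the style of Cochran--Orr--Teichner and Cochran--Harvey--Leidy, and to detect their cables by higher-order von Neumann $\rho$-invariants. First I would fix a slice ``seed'' knot $R$ carrying a tree of infection curves whose innermost curve $\eta$ lies simultaneously in the $n$-th derived subgroup of $\pi_1$ of the relevant complement and in the $(n-1)$-bipolar position of the associated $4$-manifolds; $R$ must be chosen so that its Alexander polynomial, and the higher-order Alexander polynomials appearing at each stage of the infection, satisfy the coprimality and ``rationally universal'' coefficient hypotheses needed for the metabolizer arguments below. For each $i$, let $J^i$ be an auxiliary knot with $\rho_0(J^i)=\int_{S^1}\sigma_{J^i}\neq 0$, with the family $\{J^i\}$ chosen by the standard device (connected sums of torus knots whose signature jumps sit near $\Q$-linearly independent points of the circle) so that the relevant collection of signature integrals is $\Q$-linearly independent; I would additionally arrange that the reciprocal root pairs of the $\Delta_{K^i}$ are multiplicatively independent enough that the polynomials $\{\Delta_{K^i}(t^p)\}_{i,p\ge 1}$ are pairwise coprime over $\Q[t^{\pm 1}]$. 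Set $K^i$ to be the infection of $R$ by $J^i$ along $\eta$ (and along the intermediate curves of the tree). By the theorems that satellite operations preserve the solvable and bipolar filtrations, $K^i\in\F_n\cap\B_{n-1}$, and by Proposition~\ref{prop:cablefiltration} the same holds for every cable $K^i_{p,1}$.

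The cabling-specific input is that a $(p,1)$-cable is a satellite with an \emph{unknotted} pattern of winding number $p$, so $K^i_{p,1}$ is again an iterated-infection knot with $\Delta_{K^i_{p,1}}(t)=\Delta_{K^i}(t^p)$, and the substitution $t\mapsto t^p$ propagates up the whole tower of higher-order Alexander and Blanchfield modules. I expect that the innermost infection knot $J^i$ then contributes to the order-$n$ $\rho$-invariant of $K^i_{p,1}$ through a representation that factors through the degree-$p$ self-cover: the \emph{value} of that contribution will still be $\rho_0(J^i)$ (a Litherland-type satellite formula for $\rho$, using that $T_{p,1}$ is unknotted), but the \emph{representation} genuinely depends on $p$, and the coprimality of the $\Delta_{K^i}(t^p)$ is what will let me isolate one pair $(i,p)$ at a time by a single first-order coefficient system.

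For the lower bound, suppose a finite linear combination $L:=\#_{i,p}\,a_{i,p}\,K^i_{p,1}$ lies in $\F_{n.5}$; then $L$ is $(n)$-solvable, and applying the von Neumann $\rho$-invariant obstruction theorem of Cochran--Orr--Teichner (in the refined form of Cochran--Harvey--Leidy) to $S^3_0(L)$ shows that for every representation $\phi$ of $\pi_1(S^3_0(L))$ into an appropriate $(n)$-solvable PTFA group compatible with a metabolizer of the higher-order Blanchfield form, $\rho(S^3_0(L),\phi)=0$. Choosing $\phi$ from a first-order coefficient system supported on a single summand $K^i_{p,1}$ (possible by the pairwise coprimality), additivity of $\rho$ under connected sum reduces the equation to $a_{i,p}\cdot\rho(S^3_0(K^i_{p,1}),\phi|)=0$; computing the latter by propagating $\phi$ through the infection tower and invoking the satellite formula yields $a_{i,p}\cdot\rho_0(J^i)$, up to terms that vanish because of the genericity of the intermediate Blanchfield forms. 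Since $\rho_0(J^i)\neq 0$, every $a_{i,p}=0$, proving linear independence in $\F_n/\F_{n.5}$. The same argument, now using the Cochran--Harvey--Horn bipolar obstruction theorem (in the generality needed for arbitrary $n$, after Cha), with the same order-$n$ $\rho$-invariant obstructing membership in $\B_{n+1}$, gives linear independence in $\B_{n-1}/\B_{n+1}$; I would also need to check that the $d$-invariant part of the bipolar obstruction is automatically satisfied for these knots and so does not interfere.

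The hard part will not be the construction but verifying that the infection tower is sufficiently ``robust'': at each of the $n$ levels one must show that the relevant higher-order Blanchfield form is nondegenerate and that the piece of its metabolizer carrying the innermost signature contribution survives, and this has to be done uniformly in $n$ and simultaneously for \emph{both} obstruction theorems. Pinning down a seed knot $R$ with all of the required coprimality, rationally-universal-coefficient, and solvable-and-bipolar placement properties at once, and then carrying out the bookkeeping showing that a single finite family of coefficient systems isolates each coefficient $a_{i,p}$ (which in fact upgrades linear independence to the infinite-rank-summand statements), is where essentially all of the work lies; the cabling input --- the $t\mapsto t^p$ satellite formula together with the coprimality of $\{\Delta_{K^i}(t^p)\}_{i,p}$ --- is precisely what makes that isolation feasible.
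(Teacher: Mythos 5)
Your overall architecture is the same as the paper's: iterated infection on ribbon seed knots, detection by higher-order von Neumann $\rho$-invariants via the Cochran--Harvey--Leidy machinery, the observation that $(p,1)$-cabling replaces $t$ by $t^p$ in the (outermost) Alexander module so that coprimality of the resulting polynomials isolates the coefficients, and the Cochran--Harvey--Horn remark that the same $\rho$-obstructions rule out membership in $\B_{n+1}$. However, the proposal defers exactly the steps that constitute the content of the paper, and for two of them it is not clear your plan would go through as stated. First, your assertion that the innermost infection still contributes the value $\rho_0(J^i)$ ``by a Litherland-type satellite formula'' is not a formal consequence of anything standard: what is actually needed is that the $(p,1)$-cable of a robust doubling operator is again robust, and in particular that \emph{every} first-order signature of $R_{p,1}$ associated to a non-ribbon isotropic submodule is nonzero. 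The paper proves this (Theorem~\ref{prop:cablerobust}) by building an explicit cobordism $V$ from $M(R)$ to $M(R_{p,1})$ out of a $1$-handle and a $2$-handle, checking that $H_1(M(R_{p,1});\Q[s,s^{-1}])\to H_1(V;\Q[s,s^{-1}])$ is an isomorphism, and showing $\sigma^{(2)}(V,\overline{\phi_{Q'}})=\sigma(V)=0$ via an $L^2$-dimension computation over the Ore localization $\K(\Gamma)$; this yields $\mathcal{FOS}(R_{p,1})=\mathcal{FOS}(R)$. Nothing in your sketch supplies this identity, and without it the nonvanishing of the order-$n$ obstruction for the cabled knots is unproved. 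Relatedly, the substitution $t\mapsto t^p$ does \emph{not} propagate up the whole tower: the cable of the iterated infection knot is the composite in which only the outermost operator is cabled, and it is precisely the fact that the inner levels are untouched that lets one quote CHL's theorem with sequences differing only in the last entry.

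Second, the coprimality you invoke requires that $\delta(t^p)$ remain irreducible in $\Q[t,t^{-1}]$ for every $p$, i.e., strong primality of $\delta$. This is a genuine number-theoretic constraint, not something one can ``arrange'' by multiplicative independence of roots: for $\delta_k(t)=kt-(k+1)$ the paper needs a generalized Eisenstein criterion for $k=8$ and Mih\u{a}ilescu's theorem (the Catalan conjecture) to rule out $k+1=x^a$, $k=y^b$ for $k\neq 8$. If $\delta(t^p)$ factors, the module $\A(R_{p,1})$ acquires extra isotropic submodules and the whole metabolizer analysis breaks. Finally, placing the examples in $\B_{n-1}$ is not achieved by positioning a curve in ``the $(n-1)$-bipolar position'': the paper must choose the innermost infection knots to be $0$-positive (twist knots unknotted by a positive crossing change) and verify $0$-negativity of the first-stage infection by explicit crossing changes to a satellite of a $0$-negative knot, before applying the doubling operators $n-1$ more times. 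These three items --- robustness of the cabled operator, strong irreducibility, and the bipolar placement --- are the substance of the proof and are missing from the proposal.
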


In particular, the examples given above which lie in $\F_1$ have vanishing Levine-Tristram signature function, those in $\F_2$ have vanishing Casson-Gordon sliceness obstructions, and those in $\B_0$ have vanishing $\tau$-, $\varepsilon$-, and $\Upsilon$-invariants. Consequently, Theorem~\ref{thm:cables-indep} shows that the above tools are not sufficient to completely detect the linear independence of cables in~$\C$.

The proof of Theorem~\ref{thm:cables-indep} uses the technology of~\cite{Cochran-Harvey-Leidy:2011-1}, namely \emph{robust doubling operators} (Definition~\ref{def:robust}) and von Neumann $\rho$-invariants (Section~\ref{sec:L2-signature-invariants}). The main technical result shows that for any $p$, the $(p,1)$ cable of a robust doubling operator is also a robust doubling operator, under a mild hypothesis on the Alexander polynomial (Theorem~\ref{prop:cablerobust}).

As an application of our main result, we show that certain families of $(p,1)$ cables form bases for infinite rank summands of $\Fn$. 

\newtheorem*{thm:summand}{Theorem~\ref{thm:summand}}
\begin{thmx}~\label{thm:summand} For any $n\geq 0$, there exists an infinite family of knots $\{K^i\}_{i\ge 1}\subset  \F_n$ such that, for each fixed $i$, the set $\{K^i_{2^j,1}\}_{j\geq 0}$ is a basis for an infinite rank summand of $\Fn$ and for which $\{K^i_{p,1}\}_{i, p\geq 1}$ is linearly independent in $\mathcal{F}_{n}/\mathcal{F}_{n.5}$.
\end{thmx}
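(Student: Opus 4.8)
The plan is to deduce this from Theorem~\ref{thm:cables-indep} together with a standard projection argument for splitting off summands of abelian groups, in the spirit of Cochran--Harvey--Leidy. First I would invoke Theorem~\ref{thm:cables-indep} at level $n+1$ (so that the resulting knots lie in $\F_{n+1}\subseteq \F_n$, ensuring in particular that they are in $\F_n$ as required) to obtain an infinite family $\{K^i\}_{i\ge 1}$ for which $\{K^i_{p,1}\}_{i,p\ge 1}$ is linearly independent in $\F_{n+1}/\F_{(n+1).5}$, hence \emph{a fortiori} linearly independent in $\F_n/\F_{n.5}$ — this already gives the last assertion of the theorem. For the case $n=0$ one must instead observe that the family may be taken in $\F_0$ directly; the point of passing to level $n+1$ is only to absorb the $n\ge 1$ hypothesis of Theorem~\ref{thm:cables-indep}, and for $n=0$ we simply take $n=1$ in that theorem and note $\F_1\subseteq\F_0$.

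Next, to realize $\{K^i_{2^j,1}\}_{j\ge 0}$ as a \emph{basis for a summand} of $\F_n$ (not merely a linearly independent set), I would fix $i$ and construct a homomorphism $\Phi\colon \F_n \to \bigoplus_{j\ge 0}\Z$ (or into $\Q^\infty$, then clear denominators) whose restriction to the subgroup generated by $\{K^i_{2^j,1}\}_{j\ge 0}$ is an isomorphism onto $\bigoplus_{j\ge 0}\Z$. The coordinates of $\Phi$ are built from the von Neumann $\rho$-invariants of Section~\ref{sec:L2-signature-invariants} associated to the robust doubling operators underlying the $K^i$: by Theorem~\ref{prop:cablerobust}, the $(2^j,1)$ cable of a robust doubling operator is again robust, and robustness is precisely what makes the relevant $\rho$-invariant a well-defined homomorphism on $\F_n$ that is nonzero on the corresponding cable and vanishes on all the others and on $\F_{n.5}$. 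Choosing one such $\rho$-invariant $\rho_j$ for each $j$ and setting $\Phi = (\rho_0,\rho_1,\dots)$ — after rescaling each coordinate by the reciprocal of $\rho_j(K^i_{2^j,1})$ so that $\Phi(K^i_{2^j,1})=e_j$ — exhibits the desired splitting: $\F_n \cong \ker\Phi \oplus \langle K^i_{2^j,1} : j\ge 0\rangle$.

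The main obstacle is the \emph{orthogonality} needed for $\Phi$ to be an isomorphism on the span, i.e.\ arranging that $\rho_j$ detects $K^i_{2^j,1}$ but is insensitive to $K^i_{2^k,1}$ for $k\ne j$. This is exactly the kind of statement that the robust-doubling-operator machinery of~\cite{Cochran-Harvey-Leidy:2011-1} is designed to produce, via a careful choice of the coefficient systems (representations) defining each $\rho_j$ so that the Alexander-module/Blanchfield-pairing data of the different cables are mutually ``invisible''; one must check that the distinct cabling parameters $2^j$ do not create unwanted coincidences among the relevant localized Blanchfield pairings, which is where the hypothesis on the Alexander polynomial in Theorem~\ref{prop:cablerobust} is used. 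A secondary technical point is that $\bigoplus_{j}\Z$ is free, so any surjection onto it splits; thus once $\Phi$ is shown to restrict to a surjection (equivalently, an isomorphism, given linear independence) on the span, the summand conclusion is automatic, and only the quotient statement in $\F_n/\F_{n.5}$ needs the separate (already established) input from Theorem~\ref{thm:cables-indep}. Finally, I would note that the family $\{K^i\}_{i\ge 1}$ can be chosen once and for all to work simultaneously for all $i$, since Theorem~\ref{thm:cables-indep} already provides linear independence of the doubly-indexed set $\{K^i_{p,1}\}_{i,p\ge 1}$.
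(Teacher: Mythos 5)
There are two genuine gaps here. First, your reduction of the linear independence statement is backwards: linear independence in $\F_{n+1}/\F_{(n+1).5}$ does \emph{not} imply linear independence in $\F_n/\F_{n.5}$. Since $\F_{n+1}\subseteq \F_{n.5}$, every knot produced by Theorem~\ref{thm:cables-indep} at level $n+1$ is already trivial in $\F_n/\F_{n.5}$, so the ``a fortiori'' step fails outright. In particular your $n=0$ case collapses: knots in $\F_1$ are algebraically slice, hence zero in $\F_0/\F_{0.5}$. For $n\ge 1$ one should simply run the machinery at level $n$; the paper handles $n=0$ separately via classical results of Levine and Litherland together with the identifications of $\F_0$ and $\F_{0.5}$ with the Arf-trivial and algebraically slice knots.

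Second, and more seriously, the summand argument cannot be carried out with $\rho$-invariants. The first-order (and higher-order) signatures underlying robustness are \emph{not} homomorphisms on $\F_n$ or on $\C$: they depend on choices of isotropic submodules of the Alexander module, which do not behave functorially under connected sum, and the Cochran--Harvey--Leidy machinery extracts from them only \emph{linear independence} in $\F_n/\F_{n.5}$, never a retraction $\Phi\colon\F_n\to\bigoplus_{j}\Z$. The paper's route to the summand is entirely different: it works with the operators $Q^k_{\alpha_k}$ rather than the $R^{k,J}_{\alpha_k}$ of Theorem~\ref{thm:cables-indep}, precisely because these admit Legendrian representatives with $\tb=0$ and $\rot=1$, so that Plamenevskaya's inequality combined with the genus-one bound forces $\tau\bigl((Q^k_{\alpha_k})^n(T_{2j_m})\bigr)=1$ (Proposition~\ref{prop:taucomp}); one then invokes the Feller--Park--Ray theorem (Theorem~\ref{thm:Upsilon}), whose proof uses the $\Upsilon$-invariant --- an honest homomorphism out of $\C$ --- to conclude that $\{K_{2^j,1}\}_{j\ge 0}$ spans an infinite rank summand of $\C$ and hence of $\F_n$. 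Some such genuinely additive invariant is indispensable for the splitting; the solvable-filtration obstructions alone cannot supply it.
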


The proof of the above theorem uses a result of Feller and the second and third authors~\cite{Feller-Park-Ray:2016-1}, using the $\Upsilon$-invariant, which says that for any genus one knot $K$ with $\tau(K)=1$, the set $\{K_{2^j,1}\}_{j\geq 0}$ is a basis for an infinite rank summand of $\C$.

As a second application of Theorem~\ref{thm:cables-indep}, we consider Kauffman's conjecture on slice knots~\cite[p.~226]{Kauffman:1987-1}. For a knot $K$ with a genus $g$ Seifert surface $F$, a $g$-component link $d$ on $F$ is called a \emph{derivative for $K$ associated with $F$} if it is homologically essential and the linking form vanishes on it. Note that if $d$ is slice, so is $K$ since we may obtain a slice disk for $K$ by performing ambient surgery on $F$ using a collection of slice disks for $d$. If $F$ has genus one, $d$ is simply a knot. Elementary linear algebra shows that a genus one Seifert surface for an algebraically slice knot $K$ has exactly two derivatives, up to orientation. In the 1980s, Kauffman conjectured that if a knot $K$ is slice, then any genus one Seifert surface for $K$ will have a derivative knot $d$ such that $d$ is slice, or at least $\Arf(d)=0$. In~\cite{Cochran-Davis:2015-1}, Cochran and the first author constructed counterexamples to Kauffman's conjecture, by finding slice knots with genus one Seifert surfaces where one derivative has nontrivial Arf invariant and the other is of the form $L\#-L_{2,1}$ where $L$ is any knot and $-L_{2,1}$ denotes the reverse of the mirror image of $L_{2,1}$. By judicious choice of $L$, they ensured that $\Arf(L\# -L_{2,1})=1$, i.e.\ each derivative is nontrivial in $\C/\F_0$. We give a filtered version of their result, as follows. 

\newtheorem*{thm:kauffman}{Theorem~\ref{thm:kauffman}}
\begin{thmx}~\label{thm:kauffman}
For any $n\geq 1$, there exists a slice knot $K$ bounding a genus one Seifert surface with derivative curves $d$ and $d'$ such that $\Arf(d')\neq 0$ and $d$ is nontrivial in each of the quotients $\Fn / \Fnpointfive$ and $\B_{n-1}/\B_{n+1}$. 
\end{thmx}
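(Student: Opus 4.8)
The plan is to combine Theorem~\ref{thm:cables-indep} with the construction of Cochran and the first author in~\cite{Cochran-Davis:2015-1}. Recall that the latter produces, for \emph{every} knot $L$, a smoothly slice knot $K=K(L)$ bounding a genus one Seifert surface whose two derivative curves are $d$ and $d'$, where $d$ is of the form $L\#-L_{2,1}$ and $d'$ is a knot with $\Arf(d')\neq0$. In~\cite{Cochran-Davis:2015-1} the companion $L$ is then chosen so that in addition $\Arf(L\#-L_{2,1})=1$; the idea here is instead to choose $L$ so that the derivative $d$ is concordance-theoretically \emph{deep}, at the (harmless) expense of $\Arf(d)=0$, which is forced anyway since $d\in\F_n\subseteq\F_0$.

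Concretely, first I would apply Theorem~\ref{thm:cables-indep} at the given $n\ge1$ and set $L:=K^1$, the first knot of the family it produces, so that $L\in\F_n\cap\B_{n-1}$ and the subset $\{L_{p,1}\}_{p\ge1}$ of $\{K^i_{p,1}\}_{i,p\ge1}$ is linearly independent in $\F_n/\F_{n.5}$ and in $\B_{n-1}/\B_{n+1}$. By Proposition~\ref{prop:cablefiltration} we have $L_{2,1}\in\F_n\cap\B_{n-1}$, and since $\F_n$ and $\B_{n-1}$ are subgroups of $\C$ it follows that $d:=L\#-L_{2,1}\in\F_n\cap\B_{n-1}$. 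In $\C$ the class of $d$ is $[L_{1,1}]-[L_{2,1}]$, a nontrivial $\Z$-linear combination of elements of the linearly independent set $\{L_{p,1}\}_{p\ge1}$, so $d\notin\F_{n.5}$ and $d\notin\B_{n+1}$; that is, $d$ is nontrivial in both $\F_n/\F_{n.5}$ and $\B_{n-1}/\B_{n+1}$. The knot $K=K(L)$ then has all the required properties: it is smoothly slice, it bounds a genus one Seifert surface, and its two derivative curves are $d'$ with $\Arf(d')\neq0$ and $d=L\#-L_{2,1}$, nontrivial in $\F_n/\F_{n.5}$ and $\B_{n-1}/\B_{n+1}$.

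Given Theorem~\ref{thm:cables-indep}, there is essentially no further difficulty. The only point to check is that the construction of~\cite{Cochran-Davis:2015-1} takes an \emph{arbitrary} knot $L$ as its companion and returns a smoothly slice knot $K(L)$ with one derivative equal to $L\#-L_{2,1}$ and the other, $d'$, satisfying $\Arf(d')\neq0$ irrespective of the choice of $L$; this should follow by quoting the relevant statement of~\cite{Cochran-Davis:2015-1} directly, since their argument that $K(L)$ is slice and their computation of $\Arf(d')$ use nothing about $L$. (Since our $L=K^1$ has $\Arf(L)=0$, the derivative $d$ here has $\Arf(d)=0$, unlike in~\cite{Cochran-Davis:2015-1}; we do not claim otherwise, and the statement above does not require it.) Feeding in for $L$ instead an appropriate topologically slice knot for which $L_{2,1}$ is not smoothly slice yields the variant mentioned in the introduction, in which one derivative has nontrivial Arf invariant and the other is topologically but not smoothly slice; that is a separate statement and I would treat it as such.
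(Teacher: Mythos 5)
Your proposal is correct and follows essentially the same route as the paper: apply the Cochran--Davis construction with companion $L$ taken from Theorem~\ref{thm:cables-indep}, and use the linear independence of $\{L_{p,1}\}_{p\ge 1}$ to see that $d=L\#-L_{2,1}=[L_{1,1}]-[L_{2,1}]$ is nontrivial in $\Fn/\Fnpointfive$ and $\B_{n-1}/\B_{n+1}$. The only difference is that the paper verifies $\Arf(d')\neq 0$ for arbitrary $L$ by a short explicit computation ($d'$ is built from the right-handed trefoil by winding number $\pm1$ satellites with companions $L$ and $-L$, so $\Arf(d')=\Arf(T)+\Arf(L)+\Arf(-L)=1$) rather than deferring to the cited statement, but this is cosmetic.
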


To complete the set of examples, we also show that there exist slice knots with derivatives that are topologically slice but not slice. Since all topologically slice knots lie in $\bigcap \F_n$, this is an example of a non-slice derivative in $\bigcap \F_n$. 

\newtheorem*{thm:kauffmanprime}{Theorem~\ref{thm:kauffmanprime}}
\stepcounter{thmxprime}
\stepcounter{thmxprime}
\begin{thmxprime}\label{thm:kauffmanprime}
There exists a slice knot $K$ bounding a genus one Seifert surface with derivative curves $d$ and $d'$, such that $\Arf(d')\neq 0$ and $d$ is topologically slice but not slice.
\end{thmxprime}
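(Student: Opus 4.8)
The strategy is to feed a carefully chosen auxiliary knot into the construction of Cochran and the first author from~\cite{Cochran-Davis:2015-1}, just as in the proof of Theorem~\ref{thm:kauffman}, but now selecting the auxiliary knot so as to separate the smooth and topological categories rather than to reach deep into the filtrations. Recall from~\cite{Cochran-Davis:2015-1} that for \emph{any} knot $L$ there is a smoothly slice knot $K$ bounding a genus one Seifert surface with derivative curves $d'$ and $d$, where $\Arf(d')\neq 0$ and $d=L\#-L_{2,1}$ (here $-L_{2,1}$ is the reverse of the mirror of $L_{2,1}$, as in the introduction). Hence it suffices to produce a knot $L$ for which $d=L\#-L_{2,1}$ is topologically slice but not smoothly slice; taking the corresponding $K$ then proves the theorem.

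For the choice of $L$, I would take a topologically slice knot with $\varepsilon(L)=-1$, where $\varepsilon$ is Hom's concordance invariant~\cite{Hom:2014-1}. Concretely, let $D_+(T_{2,3})$ denote the untwisted positive-clasped Whitehead double of the right-handed trefoil; it has trivial Alexander polynomial and so is topologically slice by Freedman's theorem, and $\tau(D_+(T_{2,3}))=1\neq 0$, which forces $\varepsilon(D_+(T_{2,3}))\neq 0$. Letting $L$ be whichever of $D_+(T_{2,3})$ and its mirror has $\varepsilon=-1$ (the mirror of a topologically slice knot is again topologically slice) produces a topologically slice knot with $\tau(L)=0$ and $\varepsilon(L)=-1$.

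It remains to verify the two properties of $d=L\#-L_{2,1}$. First, $d$ is topologically slice: since satellite operations respect topological concordance and the $(2,1)$-cable of the unknot is again the unknot, a topological slice disk for $L$ yields a topological concordance from $L_{2,1}$ to the unknot, so $L_{2,1}$, hence $-L_{2,1}$, hence $d=L\#-L_{2,1}$, is topologically slice. (In particular $d\in\bigcap_n\F_n$, as in the remark preceding the theorem.) Second, $d$ is not smoothly slice: since $\tau(L)=0$ and $\varepsilon(L)=-1$, Hom's cabling formula~\cite{Hom:2014-1} gives $\tau(L_{2,1})=2\tau(L)+\tfrac{(2-1)(1+1)}{2}=1$, and therefore, by additivity of $\tau$ under connected sum together with $\tau(-J)=-\tau(J)$, we get $\tau(d)=\tau(L)-\tau(L_{2,1})=-1\neq 0$. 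As $\tau$ obstructs smooth sliceness, $d$ is not smoothly slice.

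The only step carrying genuine content is the choice of $L$: one needs a topologically slice knot on which a smooth concordance invariant that behaves nontrivially under $(2,1)$-cabling---here the value $\tau$ takes on the cable, governed by $\varepsilon$---is nonzero. This reflects precisely the fact that $(p,1)$-cabling preserves topological but not smooth sliceness, and the relevant inputs (the computation $\tau(D_+(T_{2,3}))=1$ and Hom's cabling formula) are already available in the literature, so no new work beyond assembling these ingredients with the Cochran--Davis construction is anticipated.
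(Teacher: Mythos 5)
Your overall strategy is the same as the paper's: feed a topologically slice knot $L$ into the Cochran--Davis construction so that $d = L \# -L_{2,1}$ is topologically slice (which both you and the paper verify correctly, you via naturality of cabling under topological concordance, the paper via triviality of the Alexander polynomial) but has $\tau(d)\neq 0$. However, your concrete choice of $L$ contains a genuine error. You correctly record $\tau(D_+(T_{2,3}))=1$, but then assert that taking $L$ to be whichever of $D_+(T_{2,3})$ and its mirror has $\varepsilon=-1$ yields $\tau(L)=0$. This is false: the two candidates have $\tau=+1$ and $\tau=-1$ respectively, and neither has $\tau=0$. The knot with $\varepsilon=-1$ is the mirror, with $\tau(L)=-1$, and then Hom's cabling formula gives $\tau(L_{2,1}) = 2(-1) + \tfrac{(2-1)(1+1)}{2} = -1$, hence $\tau(d) = \tau(L)-\tau(L_{2,1}) = -1-(-1)=0$. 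So with the correct values your $\tau$-obstruction vanishes and the argument does not show $d$ is non-slice. Your computation only closes because you substituted the incorrect value $\tau(L)=0$.

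The fix is to make the choice the paper makes: take $L = D = D_+(T_{2,3})$ itself, with $\tau(D)=1$ and $\varepsilon(D)=+1$. Then Hom's formula in the $\varepsilon=+1$ case gives $\tau(D_{2,1}) = 2\tau(D) + \tfrac{(2-1)(1-1)}{2} = 2$, so $\tau(d) = \tau(D) - \tau(D_{2,1}) = -1 \neq 0$ and $d$ is not smoothly slice. There is no need to arrange $\varepsilon(L)=-1$; what matters is only that $\tau(L_{2,1}) \neq \tau(L)$, which already holds for the positively clasped Whitehead double of the right-handed trefoil. With that correction the rest of your argument (the Arf computation for $d'$ and the topological sliceness of $d$) goes through and coincides with the paper's proof.
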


Let $K$ be a knot with a genus one Seifert surface $F$. Then $F \times I\subset S^3\times I$ is a genus two handlebody. As we noted above, if a derivative curve $d$ on $F$ bounds a topological (resp.\ smooth) slice disk $\Delta_d$ in $B^4$, we can do ambient surgery on $F\times \{1\}$ using the slice disk for $d\times \{1\}\subset S^3\times \{1\}$ to yield a topological (resp.\ smooth) slice disk for $K$; call this new slice disk $\Delta$. 
By gluing a thickened copy of $\Delta_d$ to $F\times I$, we see that the genus one surface~$F \cup \Delta$ bounds a topological (resp.\ smooth) handlebody in $B^4$. Conversely, if there exists some slice disk $\Delta$ for $K$ such that $F\cup \Delta$ bounds a topological (resp.\ smooth) handlebody, then there must be some derivative curve on $F$ which is topologically (resp.\ smoothly) slice. Thus, Theorem~\ref{thm:kauffmanprime} gives a smoothly slice knot $K$ with a genus one Seifert surface $F$ such that there exists a topological slice disk $\Delta_{\text{top}}$ for $K$ such that $F \cup \Delta_{\text{top}}$ bounds a topological handlebody in $B^4$, but $F\cup \Delta$ does not bound a smooth handlebody in $B^4$ for any smooth slice disk $\Delta$ for $K$.  This is done explicitly in Corollary \ref{cor:handlebody}.

\subsection*{Outline}
In Section~\ref{sec:background} we give some background on von Neumann $\rho$-invariants and the solvable and bipolar filtrations of $\C$. Section~\ref{sec:robust} introduces robust doubling operators and we prove the key technical result that $(p,1)$ cables of robust doubling operators are often robust (Theorem \ref{prop:cablerobust}). Sections~\ref{sec:proofofA},~\ref{sec:proofofB}, and~\ref{sec:proofofC} give the proofs of Theorems~\ref{thm:cables-indep},~\ref{thm:summand}, and~\ref{thm:kauffman} respectively. 

\subsection*{Notation and conventions} 
For any knot $K$, $-K$ denotes the mirror image of $K$ with the circle orientation reversed and $M(K)$ refers to the $0$-framed surgery on $S^3$ along $K$. We denote the set of non-negative integers by $\mathbb{N}_0$ and the set of positive integers by $\mathbb{N}$. Throughout the paper, smoothness should be assumed whenever we say concordant, concordance, or slice without qualification. We also assume that any topological embedding is locally flat.

\subsection*{Acknowledgements} Much of this research was completed while the authors participated in a trimester on Topology at the Hausdorff Institute for Mathematics in Bonn, Germany in the fall of 2016. We also worked on this project while the first author visited the second and third at the Max-Planck-Institut f\"{u}r Mathematik in Bonn, Germany in the fall of 2017 and the summer of 2018. We thank both HIM and MPIM for a stimulating research atmosphere. We also thank Jae Choon Cha and Danny Ruberman for helpful conversations.\\

\section{Background}~\label{sec:background}

\subsection{Von Neumann $\rho$-invariants}\label{sec:L2-signature-invariants}
Let $M$ be a closed, oriented $3$-manifold, $\Gamma$ be a discrete group, and $\phi \colon \pi_1(M) \rightarrow \Gamma$ be a representation. The \emph{von Neumann $\rho$-invariant}, $\rho(M,\phi)\in\mathbb{R}$, was defined by Cheeger and Gromov in \cite{Cheeger-Gromov:1985-1}. For a detailed discussion from our perspective, see \cite[Section~5]{Cochran-Harvey-Leidy:2011-1},\cite[Section~2]{Cochran-Teichner:2007-1}, or \cite[Section~2]{Cochran-Orr-Teichner:2004-1}. We will only need the following properties\label{rho-properties}. 
\begin{enumerate}
\item If $M=M_1\sqcup M_2$ with $\phi=\phi_1\sqcup \phi_2$ where $\phi_1\colon \pi_1(M_1)\to\Gamma$ and $\phi_2\colon \pi_1(M_2)\to\Gamma$, then $\rho(M,\phi)=\rho(M_1,\phi_1)+\rho(M_2,\phi_2)$.
\item If $-M$ denotes the orientation reverse of $M$, then $\rho(-M,\phi)=-\rho(M,\phi)$. 
\item If $(M,\phi) = \partial(W, \psi)$ for some compact, oriented $4$-manifold $W$ and $\psi\colon \pi_1(W)\to \Gamma$, then $\rho(M,\phi) = \sigma^{(2)}(W,\psi) -\sigma(W)$ where $\sigma^{(2)}(W,\psi)$ is the $L^2$-signature of the equivariant intersection form  on 
$H_2(W;\Z[\Gamma])$
\item If $\phi$ factors through  $\phi'\colon \pi_1(M)\to\Gamma'$ where $\Gamma'\leq \Gamma$, then $\rho(M,\phi')=\rho(M,\phi)$. 
\end{enumerate}

For any knot $K$, recall that $M(K)$ refers to the $0$-framed surgery on $S^3$ along $K$.  The von Neumann $\rho$-invariants corresponding to certain special representations of $\pi_1(M(K))$ will be important to us. First, for a knot $K$, we define $\rho_0(K):= \rho(M(K),\phi)$, where 
\[
\phi \colon \pi_1(M(K)) \rightarrow \mathbb{Z}
\]
is the abelianization map. It is known that $\rho_0(K)$ equals the integral of the Levine-Tristram signature function of $K$ over $S^1$~\cite[Proposition~5.1]{Cochran-Orr-Teichner:2004-1}. 

Let $G$ be the group $\pi_1(M(K))$. Since $G/G^{(1)}\cong \Z$, the exact sequence 
$$
0\to G^{(1)}/G^{(2)}\to G/G^{(2)}\to G/G^{(1)}\to 0
$$
splits and so $G/G^{(2)}\cong \Z\ltimes G^{(1)}/G^{(2)}$.    Let $\A(K)$ denote the rational Alexander module of~$K$.  As $G^{(1)}/G^{(2)}$ is $\Z$-torsion free, $G^{(1)}/G^{(2)}\into \A(K)$. 
For any submodule $P \subseteq \mathcal{A}(K)$, 
we obtain the associated homomorphism
\[
\phi_P\colon G\to G/G^{(2)}\cong \Z\ltimes G^{(1)}/G^{(2)}\into \Z\ltimes \A(K)\to  \Z\ltimes \A(K)/P
\]
and thus, the corresponding von Neumann $\rho$-invariant, $\rho(M(K),\phi_P)$. 

\begin{remark}\label{rem:alex-module-defn}
For any knot $K$, we have the inclusion-induced isomorphism 
\[
H_1(E(K);\Q[t,t^{-1}])\xrightarrow{\cong} H_1(M(K);\Q[t,t^{-1}]),
\]
where $E(K)$ is the exterior of $K$ in $S^3$. As a result, either of the two above modules may be taken to be the rational Alexander module of $K$. As is common practice, we will switch back and forth between these two definitions when convenient and without comment.
\end{remark}

Recall that there exists a form $\Bl:\mathcal{A}(K)\times \mathcal{A}(K)\to \Q(t)/\Q[t,t^{-1}]$ called the \emph{Blanchfield form}. A submodule $P\subseteq \mathcal{A}(K)$ is said to be \emph{isotropic} if $P\subseteq P^\perp$ and \emph{Lagrangian} if $P=P^\perp$, where $P^\perp := \{q\in \mathcal{A}(K) \mid \Bl(p,q)=0 \text{ for all } p\in P\}$.

\begin{definition}[{\cite[Section~4]{Cochran-Harvey-Leidy:2008-1}},{\cite[Section~3]{Cochran-Harvey-Leidy:2009-1}}]The \emph{first-order signatures} of a knot $K$ are the real numbers $\rho(M(K),\phi_P)$ where $P$ is any isotropic submodule of $\mathcal{A}(K)$ with respect to the Blanchfield form on $\A(K)$. For any knot $K$, the set of all first-order signatures of $K$ will be denoted $\mathcal{FOS}(K)$.
\end{definition}

\begin{remark}\label{rem:1stsgn-defn} Note that $\phi_P$ and $\rho(M(K),\phi_P)$ are defined in a slightly different way elsewhere in the literature, such as in~\cite{Cochran-Harvey-Leidy:2008-1,Cochran-Harvey-Leidy:2009-1}. For any submodule $P \subseteq \mathcal{A}(K)$, let 
\[
\phi'_P\colon G\to G/\widetilde{P},
\]
where 
$$\widetilde{P} = \{x\mid x \in \ker(G^{(1)} \to G^{(1)}/G^{(2)} \to \A(K)/P)\}.$$ Since $\widetilde{P} = \ker \phi_P$, we see that $\phi_P$ factors through~$\phi'_P\colon G\to G/\widetilde{P}$, where~$G/\widetilde{P} \leq \Z\ltimes \A(K)/P$. Then by property $(4)$ for von Neumann $\rho$-invariants given above, $\rho(M(K),\phi_P)=\rho(M(K),\phi'_P)$. This justifies the slight alteration in the definitions.
\end{remark}

If a knot $K$ bounds a slice disk $\Delta \subset B^4$, we have the following submodule: 
\[
P_\Delta := \ker \left(H_1(M(K);\mathbb{Q}[t,t^{-1}])\rightarrow H_1(B^4 \setminus \Delta; \mathbb{Q}[t,t^{-1}])\right).
\]
By~\cite[Corollary 2]{Kearton:1975-2}, $P_\Delta$ is a Lagrangian submodule of $\mathcal{A}(K)$ with respect to the  Blanchfield form and we say that the Lagrangian $P_\Delta$ \emph{corresponds to the slice disk $\Delta$}. By \cite[Theorem 4.2]{Cochran-Orr-Teichner:2003-1}, $\rho(M(K), \phi_{P_\Delta}) = 0$.

\begin{remark}~\label{rmk:metalag}
If $d$ is a derivative on a Seifert surface $F$ for a knot $R$, let $\langle d \rangle$ be the submodule of $\A(R)$ generated by a lift of $d$ in the infinite cyclic cover of $S^3 \setminus R$. It is well-known that $\langle d \rangle$ is a  Lagrangian submodule of $\A(R)$ with respect to the  Blanchfield form (see e.g.\ \cite[Theorem~2]{Kearton:1975-2}\cite[Lemma~5.9]{Davis:2012-2}). We say that $\langle d \rangle$ is the \emph{Lagrangian generated by the derivative $d$}. If $d$ is slice, then $\langle d \rangle$ is in fact the Lagrangian of $\A(K)$ corresponding to the slice disk for $R$ obtained by performing ambient surgery on $F$ using a collection of slice disks for $d$.\end{remark}

\subsection{Filtrations of $\C$}

\begin{definition}[{\cite{Cochran-Orr-Teichner:2003-1}}]\label{defn:n-solvable}
A knot $K$ is said to be \emph{$n$-solvable} for $n \in \N_0$ if $K$ bounds a smooth, properly embedded disk $\Delta$ in a smooth, compact, oriented $4$-manifold $W$ with~$\partial W = S^3$ and $H_1(W;\Z)=0$ such  that $H_2(W;\Z)$ has a basis consisting of $2k$ smoothly embedded, connected, compact, oriented surfaces $\{L_1,\dots ,L_k,D_1,\dots,D_k\}$ in $W\setminus\Delta$, for some $k$, with trivial normal bundles, satisfying:
\begin{enumerate}
\item  $\pi_1(L_i) \subset \pi_1(W\setminus \Delta)^{(n)}$ and $\pi_1(D_i) \subset \pi_1(W\setminus \Delta)^{(n)}$ under inclusion for all $i=1,\dots,k$, and 
\item the geometric intersection numbers are $L_i \cdot L_j = 0 = D_i\cdot D_j$ and $L_i \cdot D_j = \delta_{ij}$ for all $i,j =1,\dots,k$.
\end{enumerate}
Such a $4$-manifold~$W$ is called an \emph{$n$-solution} for $K$. 

If, in addition, $\pi_1(L_i) \subset \pi_1(W)^{(n+1)}$ for all $i$, then we say that $K$ is \emph{$(n.5)$-solvable} and~$W$ is an \emph{$(n.5)$-solution} for $K$. 

The collection of $n$-solvable (resp.\ $(n.5)$-solvable) knots forms a subgroup of $\C$, denoted by $\mathcal{F}_{n}$ (resp.\ $\mathcal{F}_{n.5}$).  
\end{definition}

Above, recall that for any group $G$ and $n\geq 0$, $G^{(n)}$ denotes the $n$th term of the \emph{derived series} for $G$, that is, $G^{(0)}=G$ and $G^{(n+1)}=[G^{(n)},G^{(n)}]$.

In~\cite{Cochran-Harvey-Horn:2013-1}, three new filtrations of $\C$ were defined as follows. Note that the main difference is in the intersection form of the $4$-manifolds being considered. 

\begin{definition}[{\cite{Cochran-Harvey-Horn:2013-1}}]~\label{defn:bipolar}
A knot $K$ is said to be \emph{$n$-positive} (resp.\ \emph{$n$-negative}) for $n\in\mathbb{N}_0$ if $K$ bounds a smooth, properly embedded disk $\Delta$ in a smooth, compact, oriented $4$-manifold $W$ with $\partial W = S^3$ and $\pi_1(W)=1$ such that $H_2(W;\Z)$ has a basis consisting of smoothly embedded, connected, compact, oriented surfaces $\{S_1,\dots, S_k\}$ in $W\setminus \Delta$, for some~$k$, satisfying:
\begin{enumerate}
\item  $\pi_1(S_i) \subset \pi_1(W\setminus \Delta)^{(n)}$ under inclusion for each $i=1,\dots, k$, and
\item the intersection form on $H_2(W;\Z)$ is positive definite (resp.\ negative definite).
\end{enumerate}
Such a $4$-manifold $W$ is called an \emph{$n$-positon} (resp.\ \emph{$n$-negaton}) for $K$. 

The collection of $n$-positive (resp.\ $n$-negative) knots forms a submonoid of $\C$, denoted by~$\mathcal{P}_{n}$ (resp.\ $\mathcal{N}_{n}$). 

We say a knot $K$ is \emph{$n$-bipolar} if $K$ is both $n$-positive and $n$-negative. The collection of $n$-bipolar knots forms a subgroup of $\mathcal{C}$, denoted by $\mathcal{B}_{n}$.
\end{definition}

\begin{remark}\label{rem:rho-obstructions}
In~\cite[Corollary~5.6]{Cochran-Harvey-Horn:2013-1}, the following relationships were noted.
\[
\Bn\subset \langle \Pn\rangle=\langle \Nn\rangle = \langle \Pn\cup \Nn\rangle \subseteq \Fn^\mathbb{Q}
\]
where $\{\Fn^\mathbb{Q}\}$ denotes the filtration of $\C$ obtained using the same definition as Definition~\ref{defn:n-solvable}, except that all occurrences of $\Z$ are replaced by $\Q$. Importantly, it is known that all the obstructions to membership in $\Fnpointfive$ arising from von Neumann $\rho$-invariants also obstruct membership in $\Fnpointfive^\Q$~\cite[p.\ 2123]{Cochran-Harvey-Horn:2013-1}\cite[Section~7]{Cochran-Harvey-Horn:2013-1}\cite[Section~4]{Cochran-Orr-Teichner:2003-1}, and thus, obstruct membership in $\B_{n+1}$.
\end{remark}

We end this section by stating an elementary construction of $0$-positive and $0$-negative knots.

\begin{proposition}[{\cite[Proposition~3.1]{Cochran-Harvey-Horn:2013-1}}]~\label{prop:postive} Any knot that can be transformed to a knot in $\mathcal{P}_0$ by changing positive crossings to negative crossings lies in $\mathcal{P}_0$. Similarly, any knot that can be transformed to a knot in $\mathcal{N}_0$ by changing negative crossings to positive crossings lies in $\mathcal{N}_0$.
\end{proposition}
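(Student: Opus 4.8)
The plan is to manufacture a $0$-positon for the given knot out of a $0$-positon for the nearby knot in $\mathcal{P}_0$, exploiting the fact that undoing a positive crossing is realized by a \emph{positive}-definite handle attachment. The case $n=0$ is special precisely because condition~(1) of Definition~\ref{defn:bipolar} is then vacuous: for any $4$-manifold $W$ and disk $\Delta\subset W$ one automatically has $\pi_1(S)\subset\pi_1(W\setminus\Delta)^{(0)}=\pi_1(W\setminus\Delta)$, so all that is required of the generating surfaces of $H_2$ is that they lie in the complement of the slice disk and that the intersection form be definite. First I reduce to $\mathcal{P}_0$: mirroring a diagram exchanges positive and negative crossings, while reversing the orientation of a defining $4$-manifold negates its intersection form and carries $0$-negatons to $0$-positons, so the $\mathcal{N}_0$ assertion follows from the $\mathcal{P}_0$ one applied to mirror images. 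So assume $K$ has a diagram in which switching some positive crossings to negative produces a diagram of some $J\in\mathcal{P}_0$; equivalently, $K$ is recovered from $J$ by switching those (now negative) crossings back to positive. Then, by the standard surgery description of crossing changes, there is an $m$-component unlink $\eta_1,\dots,\eta_m\subset S^3\setminus K$ --- with $\eta_i$ a small circle about the $i$-th switched crossing, $\lk(\eta_i,K)=0$, and $\eta_i$ bounding a disk meeting $K$ transversely in two points of opposite sign --- such that simultaneous $(+1)$-surgery on all the $\eta_i$ carries $(S^3,K)$ to $(S^3,J)$. The relevant sign is exactly that switching a positive crossing to a negative one is $(+1)$-surgery; as a sanity check, the right-handed trefoil becomes the unknot after one such switch and so must lie in $\mathcal{P}_0$, consistent with $\tau>0$ obstructing membership in $\mathcal{N}_0$.

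Next I build the $4$-manifold. Let $Z$ be obtained from $S^3\times[0,1]$ by attaching $2$-handles $h_1,\dots,h_m$ along $\eta_1\times\{1\},\dots,\eta_m\times\{1\}$, each with framing $+1$, and regard $K$ as lying in $S^3\times\{0\}$. Since $\{\eta_i\}$ is a $+1$-framed unlink, the top boundary of $Z$ is again $S^3$, the image of $K\times\{1\}$ under the surgery is $J$, one has $\pi_1(Z)=1$ and $H_1(Z;\Z)=0$, and $H_2(Z;\Z)\cong\Z^m$ with intersection form $\operatorname{diag}(+1,\dots,+1)$, a basis being represented by the closed surfaces $\Sigma_i\cup(\text{core of }h_i)$; here $\Sigma_i\subset S^3\times\{1\}$ is obtained from the spanning disk of $\eta_i$ by tubing its two oppositely-signed intersections with $K\times\{1\}$ along an arc of $K\times\{1\}$, so that $\Sigma_i$ is a connected, compact, oriented, smoothly embedded surface disjoint from $K\times\{1\}$ (such a surface exists precisely because $\lk(\eta_i,K)=0$). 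Let $V$ be a $0$-positon for $J$ with corresponding slice disk $\Delta_J\subset V$, and set $W:=Z\cup V$, gluing $V$ to the top of $Z$ by a diffeomorphism identifying the two copies of $J$. Then $\Delta_K:=(K\times[0,1])\cup_J\Delta_J$ is a smoothly and properly embedded disk in $W$ with $\partial\Delta_K=K\subset\partial W=S^3$ (after rounding the corner along the gluing sphere).

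Finally I check that $W$ is a $0$-positon for $K$, which I expect to be routine. One has $\partial W=S^3$; $\pi_1(W)=1$ by van Kampen since $\pi_1(Z)=\pi_1(V)=1$ and the gluing sphere is simply connected; and a Mayer--Vietoris argument (using that the gluing sphere and $H_1(Z;\Z)$ vanish in the relevant degrees) gives $H_2(W;\Z)\cong H_2(Z;\Z)\oplus H_2(V;\Z)$, on which the intersection form is the orthogonal sum of $\operatorname{diag}(+1,\dots,+1)$ and the positive definite form of $V$, hence positive definite, with a basis given by the surfaces $\Sigma_i\cup(\text{core of }h_i)$ together with the basing surfaces $S_1,\dots,S_k$ of $V$. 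Each of these is a connected, compact, oriented, smoothly embedded surface lying in $W\setminus\Delta_K$: the $\Sigma_i\cup(\text{core of }h_i)$ because $\Sigma_i$ was arranged disjoint from $K\times\{1\}=\Delta_K\cap(S^3\times\{1\})$ and the cores lie inside the handles, and the $S_j$ because they avoid $\Delta_J=\Delta_K\cap V$. Since condition~(1) of Definition~\ref{defn:bipolar} is automatic when $n=0$, it follows that $W$ is a $0$-positon for $K$, so $K\in\mathcal{P}_0$. The only genuinely delicate point, which I would isolate beforehand as a short lemma, is the sign in the surgery description of a crossing change --- that switching a positive crossing to negative corresponds to a $+1$-framed rather than a $-1$-framed handle; the positive definiteness of $W$, and hence the whole argument, rests on this.
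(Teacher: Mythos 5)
There is a genuine gap, and it sits exactly at the point you flag as delicate: your sign for the surgery description of a crossing change is backwards. With the standard conventions (surgery coefficient measured by the Seifert framing in the standardly oriented $S^3$, under which attaching a $(+1)$-framed $2$-handle along an unknot to $B^4$ gives punctured $\mathbb{CP}^2$), blowing down a $(-1)$-framed unknot inserts a \emph{right}-handed full twist on the strands through its disk (it raises framings by $+\lambda^2$), and a right-handed full twist on two \emph{antiparallel} strands is a pair of \emph{negative} crossings. Hence $(-1)$-surgery on the linking-number-zero circle $\eta_i$ converts a positive crossing of $K$ into a negative one, and $(+1)$-surgery converts a negative crossing into a positive one --- the opposite of what you assert. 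One can see the sign must be this way from your own sanity check pushed one step further: if switching the positive crossing of the right-handed trefoil were $(+1)$-surgery, then mirroring would say the left-handed trefoil bounds a null-homologous disk in a punctured $\overline{\mathbb{CP}}{}^2$-type positive... in fact under your sign the image of the trefoil in the surgered sphere is not the unknot at all (the $+1$-framed blow-down inserts a left-handed twist, adding a positive clasp rather than undoing the crossing). Concretely, in your construction either the top boundary of $Z$ does not contain $J$ (if you keep the $+1$-framings), or, after correcting to $-1$-framings so that it does, $Z$ has intersection form $\operatorname{diag}(-1,\dots,-1)$ and $W=Z\cup V$ is indefinite; either way $W$ fails to be a $0$-positon, so the proof collapses at the step on which, as you say, the whole argument rests.

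The repair --- and this is how Cochran--Harvey--Horn argue (the paper only cites their Proposition~3.1 and does not reproduce the proof) --- is to build the cobordism from the side of the knot already in $\mathcal{P}_0$. Since $K$ is recovered from $J$ by changing \emph{negative} crossings of $J$ to positive, this is realized by $(+1)$-surgeries on an unlink $\eta_1',\dots,\eta_m'\subset S^3\setminus J$ with $\lk(\eta_i',J)=0$. Take a $0$-positon $V$ for $J$ with disk $\Delta_J$, glue a collar $\partial V\times[0,1]$ to $V$, and attach $(+1)$-framed $2$-handles along the $\eta_i'\times\{1\}$. The new boundary is $S^3$ containing $K$, the disk $\Delta_J\cup(J\times[0,1])$ is a properly embedded disk for $K$ disjoint from the handles, and $H_2$ of the result is $H_2(V)\oplus\Z^m$ with the new classes represented by the spanning disks of the $\eta_i'$ (tubed along $J$ to miss the disk, exactly as in your write-up) capped with the handle cores, each of square $+1$; so the form is $Q_V\oplus\langle+1\rangle^m$, positive definite, and the $\pi_1$-condition is vacuous at level $0$, giving $K\in\mathcal{P}_0$. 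Your reduction of the $\mathcal{N}_0$ statement by mirroring, the tubing construction, and the van Kampen/Mayer--Vietoris bookkeeping are all fine; only the direction of the handle attachment (equivalently the surgery sign) needs to be reversed.
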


\subsection{Infection on knots}
We will need to construct elements of $\Fn$ and $\Bn$ for any $n$. These examples will be built using the process of iterated \emph{infection} on ribbon knots, which we now describe. Let $R$ be a fixed knot in $S^3$ and $\alpha$ be a curve in $S^3\setminus R$ which is unknotted in $S^3$. The complement $S^3\setminus \alpha$ is a solid torus. Given any knot $K$, the knot~$R_\alpha(K)$, the result of \emph{infection on $R$ with $K$ along $\alpha$}, is the image of $R$ in the copy of $S^3$ obtained by gluing the solid torus $S^3\setminus \alpha$ to $S^3\setminus K$ by sending the meridian of $\alpha$ to the longitude of $K$ and the longitude of $\alpha$ to the meridian of $K$, as shown in Figure~\ref{fig:infection}. The reader should note that this is simply a different perspective on the classical (untwisted) satellite operation, where~$R\subset S^3\setminus \alpha$ is the pattern and $K$ is the companion. Pictorially, we can imagine cutting the strands of $R$ piercing through the disk bounded by $\alpha$, tying them into the knot~$K$ (with no twisting), and then gluing the strands back together. It is easy to see that a fixed knot~$R$ along with a choice of unknotted curve $\alpha\subset S^3\setminus R$ yields a function~$R_\alpha:\C\to\C$, mapping~$K\mapsto R_\alpha(K)$. We will denote such a function by the link~$(R,\alpha)$ as in the left panel of Figure~\ref{fig:infection}. 

\begin{figure}[htb]
\labellist
\small\hair 2pt
\pinlabel $\alpha$ at 92 30
\pinlabel $R$ at 60 -10
\pinlabel $K$ at 225 65
\pinlabel $K$ at 353 30
\pinlabel $R_\alpha(K)$ at 353 -10
\endlabellist
\centering
\includegraphics{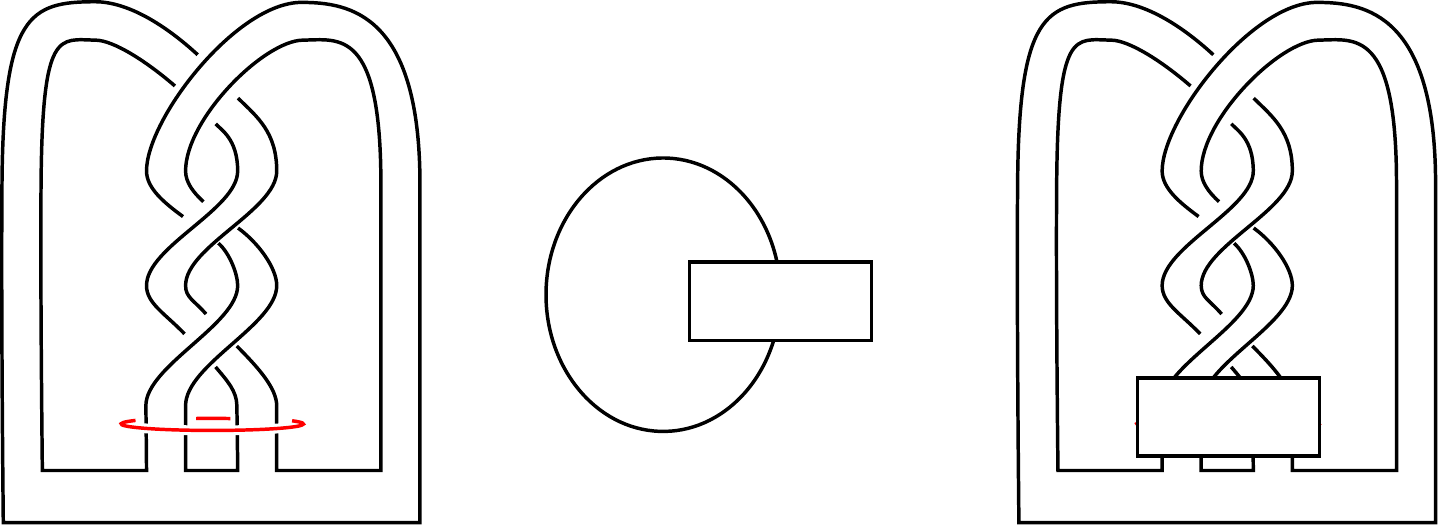}
\vspace*{5mm}
\caption{Left: A knot $R$ along with an unknotted curve $\alpha\subset S^3\setminus R$. The link $(R,\alpha)$ shows a doubling operator. Middle: A knot $K$. Right: The result of infection on $R$ with $K$ along $\alpha$. Any solid box containing $K$ indicates that all the strands passing through the box are tied into $0$-framed parallels of the knot $K$.}\label{fig:infection}
\end{figure}

\begin{definition}A \emph{doubling operator} is a function  $R_{\alpha} : \mathcal{C} \rightarrow \mathcal{C}$ arising from infection on a ribbon knot $R$ along an unknotted curve $\alpha\subset S^3\setminus R$ where $\lk(R, \alpha) = 0$.
\end{definition}

Doubling operators enable us to build knots deep in the various filtrations of $\C$ by the following result. 

\begin{proposition}[{\cite[Lemma~6.4]{Cochran-Harvey-Leidy:2008-1}}{\cite[Proposition~3.3]{Cochran-Harvey-Horn:2013-1}}]\label{prop:infection}
Let $R_\alpha$ be a doubling operator.
\begin{enumerate}
\item If $K\in\Fn$ for some $n\in\N_0$, then $R_\alpha(K)\in\F_{n+1}$;
\item if $K\in\Pn$ for some $n\in\N_0$, then $R_\alpha(K)\in\mathcal{P}_{n+1}$;
\item if $K\in\Nn$ for some $n\in\N_0$, then $R_\alpha(K)\in\mathcal{N}_{n+1}$; and
\item if $K\in\Bn$ for some $n\in\N_0$, then $R_\alpha(K)\in\B_{n+1}$.
\end{enumerate}
\end{proposition}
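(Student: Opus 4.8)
The plan is to construct an $(n+1)$-solution (resp.\ $(n+1)$-positon, $(n+1)$-negaton) for $R_\alpha(K)$ by gluing a slice-disk exterior of $R$ onto an $n$-solution (resp.\ $n$-positon, $n$-negaton) for $K$ along a solid torus, and then to verify the defining conditions directly; this is the argument of \cite[Lemma~6.4]{Cochran-Harvey-Leidy:2008-1} and \cite[Proposition~3.3]{Cochran-Harvey-Horn:2013-1}. Throughout I use the standard reformulation of these filtrations in terms of the zero-surgery manifold: $K\in\Fn$ precisely when $M(K)$ bounds a compact oriented $4$-manifold $V$ such that the inclusion induces an isomorphism $H_1(M(K);\Z)\xrightarrow{\cong}H_1(V;\Z)\cong\Z$ and $H_2(V;\Z)$ admits a basis of disjointly embedded surfaces with trivial normal bundles, each lying in $\pi_1(V)^{(n)}$, with the intersection pattern of Definition~\ref{defn:n-solvable}; for the positon/negaton versions one requires instead that these surfaces span a positive-definite (resp.\ negative-definite) $H_2(V;\Z)$ and that $\pi_1(V)$ be normally generated by the meridian. (One passes between this and the $\partial=S^3$ formulations of Definitions~\ref{defn:n-solvable} and~\ref{defn:bipolar} by adding or deleting a neighborhood of the slice disk, as in~\cite{Cochran-Orr-Teichner:2003-1}.)

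First, since $R$ is ribbon it is in particular slice; fix a slice disk $D_R\subset B^4$ and set $B_R:=B^4\setminus\nu(D_R)$, so $\partial B_R=M(R)$ and, by a Mayer--Vietoris computation, $H_1(B_R;\Z)=\Z\langle\mu_R\rangle$ while $H_2(B_R;\Z)=0$. View $\alpha\subset S^3\setminus R\subset M(R)=\partial B_R$; since $\lk(R,\alpha)=0$, its class in $H_1(B_R;\Z)$ vanishes, so $[\alpha]\in\pi_1(B_R)^{(1)}$. Now let $V$ be a $4$-manifold witnessing $K\in\Fn$ (resp.\ $\Pn$, $\Nn$) as above, with surfaces $\Sigma_1,\dots,\Sigma_m$, and let $J\subset\partial V=M(K)$ be the zero-surgery solid torus, whose compressing disk is bounded by $\lambda_K$ and whose core maps to a conjugate of $\mu_K$ in $\pi_1(M(K))$. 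Set
\[
W:=B_R\cup_{g}V,
\]
where $g$ identifies $\nu(\alpha)\subset\partial B_R$ with $J\subset\partial V$ by a homeomorphism of solid tori carrying the compressing disk of $\nu(\alpha)$ (bounded by $\mu_\alpha$) onto that of $J$ (bounded by $\lambda_K$) and the core $\alpha$ of $\nu(\alpha)$ onto the core of $J$; such a $g$ exists and is unique up to isotopy. The remaining boundary is then $\big(M(R)\setminus\nu(\alpha)\big)\cup_{T^2}E(K)$ with the identification $\mu_\alpha\leftrightarrow\lambda_K$, $\lambda_\alpha\leftrightarrow\mu_K$, which is precisely $M(R_\alpha(K))$; so $\partial W=M(R_\alpha(K))$, with meridian $\mu_{R_\alpha(K)}=\mu_R$ on the $B_R$ side.

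It remains to check that $W$ has the structure required at level $n+1$. Van Kampen presents $\pi_1(W)$ as the pushout of $\pi_1(B_R)\leftarrow\Z\to\pi_1(V)$, where the amalgamating $\Z=\pi_1(\nu(\alpha))$ (generated by $\alpha$) maps to $[\alpha]$ on the left and to a conjugate of $\mu_K$ on the right; in particular $\pi_1(W)$ is normally generated by $\mu_R$, since $\pi_1(B_R)$ is normally generated by $\mu_R$ and $[\alpha]\in\pi_1(B_R)$, which settles the $\pi_1$-condition in the positon/negaton cases. A Mayer--Vietoris computation, using $[\alpha]=0\in H_1(B_R;\Z)$ and $H_2(B_R;\Z)=0$, gives $H_1(W;\Z)=\Z\langle\mu_R\rangle$, with the inclusion $M(R_\alpha(K))=\partial W\hookrightarrow W$ inducing an isomorphism on first homology, together with an inclusion-induced isomorphism $H_2(V;\Z)\xrightarrow{\cong}H_2(W;\Z)$. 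Isotoping the surfaces $\Sigma_i$ into $\mathrm{int}\,V$, they remain disjointly embedded in $W$ with the same trivial normal bundles and the same intersection form — in particular positive-/negative-definite in the positon/negaton cases — so they still give the required basis of $H_2(W;\Z)$. Finally — the heart of the matter — the composite $\pi_1(V)\to\pi_1(W)\to H_1(W;\Z)$ factors through $H_1(V;\Z)=\Z\langle\mu_K\rangle$ and annihilates $\mu_K$ (as $\mu_K\mapsto[\alpha]=0$), so the image of $\pi_1(V)$ in $\pi_1(W)$ lies in $\pi_1(W)^{(1)}$; hence $\pi_1(\Sigma_i)\subseteq\pi_1(V)^{(n)}$ maps into $\big(\pi_1(W)^{(1)}\big)^{(n)}=\pi_1(W)^{(n+1)}$. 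Therefore $W$ witnesses $R_\alpha(K)\in\F_{n+1}$ (resp.\ $\mathcal{P}_{n+1}$, $\mathcal{N}_{n+1}$), proving (1) (resp.\ (2), (3)); and (4) is then immediate, since $\B_{n+1}=\mathcal{P}_{n+1}\cap\mathcal{N}_{n+1}$.

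The crux is this last step: explaining why the surfaces, which sit at the $n$-th derived level of $\pi_1(V)$, land at the $(n{+}1)$-st derived level of $\pi_1(W)$. It rests squarely on the two hypotheses built into a doubling operator — that $R$ is slice, so $B_R$ contributes nothing to $H_2$ and supplies a generator for the new meridian class, and that $\lk(R,\alpha)=0$, so $[\alpha]$ already lies one derived level down in $\pi_1(B_R)$, which through the pushout over $\Z$ forces all of $\pi_1(V)$ one derived level down in $\pi_1(W)$. The remaining points are routine but need care: pinning down the gluing homeomorphism $g$ and the orientations so that $\partial W$ is genuinely $M(R_\alpha(K))$ and not some other Dehn filling, and the bookkeeping involved in passing between the $\partial=S^3$ definitions of the excerpt and the zero-surgery reformulations used above.
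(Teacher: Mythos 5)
Your argument is correct and is essentially the standard construction behind the cited results of Cochran--Harvey--Leidy and Cochran--Harvey--Horn: glue the ribbon disk exterior of $R$ to the $n$-solution (resp.\ positon/negaton) for $K$ along the surgery solid torus, and use $\lk(R,\alpha)=0$ to push the surfaces one level deeper in the derived series. The paper itself gives no proof of this proposition, relying entirely on the citations, so there is nothing further to compare; the only cosmetic slip is the claim that the gluing homeomorphism of solid tori is unique up to isotopy (meridional Dehn twists give distinct classes), but specifying where the longitude goes, as you do, pins down the one you need.
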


Additionally, it is straightforward to show that the $(p,1)$ cables of a knot in $\F_n$ (resp.\ $\Bn$) lie in $\Fn$ (resp.\ $\Bn$) for all $n$ and $p$, as follows.
   
\begin{proposition}\label{prop:cablefiltration}
Let $n\geq 0$ and $p\geq 1$. If $K\in\Fn$, then~$K_{p,1}\in\F_n$; if $K\in\Bn$, then~$K_{p,1}\in\B_n$.
\end{proposition}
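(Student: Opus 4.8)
The plan is to exhibit, for any $n$-solution (resp.\ $n$-positon, $n$-negaton) $W$ for $K$, a corresponding $4$-manifold for $K_{p,1}$, by the standard infection/satellite cut-and-paste on the boundary $S^3$. Recall that $K_{p,1}$ is the satellite of $K$ with pattern the torus knot $T_{p,1}$, which is the unknot; so $K_{p,1}$ can be described as infection $P_\eta(K)$, where $P\subset S^3\setminus\eta$ is the unknot $T_{p,1}$ sitting inside a solid torus so that it winds $p$ times longitudinally, and $\eta$ is the curve along which infection is performed (the ``axis'' of the pattern). First I would observe that $\eta$ is an unknot in $S^3$ and $\lk(P,\eta)=p$.

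Next I would form the new $4$-manifold. Start with $W$, the given $n$-solution (resp.\ $n$-positon/negaton) for $K$, whose boundary is $S^3$ containing $K$. Since the pattern knot $P$ is the unknot, it bounds a smooth disk $\Delta_P$ in a $4$-ball; more usefully, the infection construction can be realized at the level of $4$-manifolds: let $W'$ be obtained from $W$ by gluing to $\partial W = S^3$ (in a collar $S^3\times I$, or equivalently by an appropriate surgery description) the standard cobordism realizing $\eta\mapsto$ meridian of $K$, so that $\partial W' = S^3$ now contains $P_\eta(K) = K_{p,1}$ and the disk $\Delta\subset W$ for $K$ gets replaced by a disk $\Delta'\subset W'$ for $K_{p,1}$ obtained by tubing $\Delta$ with $p$ parallel copies of $\Delta_P$ (or by the obvious ambient surgery). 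The key point is that this gluing changes neither $H_1$, $H_2$, nor the intersection form of $W$: because $\eta$ bounds a disk in $S^3$ and $P$ bounds a disk in the solid torus complement, no new homology is created and the pieces added are homologically trivial; in particular definiteness and $H_1(W;\Z)=0$ (resp.\ $\pi_1(W)=1$) are preserved, and the surface basis $\{L_i,D_i\}$ (resp.\ $\{S_i\}$) of $H_2$ persists. (For the solvable case one also checks $H_1(W';\Z)=0$ and $\pi_1(W') = \pi_1(W)$ or at least that $\pi_1$ is generated in the same way; this is routine since $S^3$ is simply connected and we are only modifying a collar.)

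Then I would verify the $\pi_1$-depth condition. We need $\pi_1(L_i),\pi_1(D_i)\subset \pi_1(W'\setminus\Delta')^{(n)}$ (resp.\ $\pi_1(S_i)\subset\pi_1(W'\setminus\Delta')^{(n)}$), and, for the $(n.5)$ half of the statement, the stronger condition on the $L_i$. For this I would compare $\pi_1(W'\setminus\Delta')$ with $\pi_1(W\setminus\Delta)$: the complement $W'\setminus\Delta'$ is obtained from $W\setminus\Delta$ by gluing in the complement of the pattern disk times a collar, and since the pattern is the unknot $T_{p,1}$ its complement in the solid torus is again a solid torus, so up to homotopy $W'\setminus\Delta'$ is obtained from $W\setminus\Delta$ in a way inducing an isomorphism (or at worst an epimorphism with the relevant subgroups matching) on $\pi_1$ that carries the old meridian to a power of the new meridian. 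Under such a map, derived subgroups go to derived subgroups, so $\pi_1(S_i)\subset\pi_1(W\setminus\Delta)^{(n)}$ forces $\pi_1(S_i)\subset\pi_1(W'\setminus\Delta')^{(n)}$, and similarly for the $L_i,D_i$ and the $(n.5)$ refinement with $\pi_1(W')^{(n+1)}$. The same argument handles the bipolar case, where there is no $(n.5)$ refinement but the definiteness of the intersection form is what must be preserved, which it is by the homological invariance above.

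The main obstacle I expect is making the $\pi_1$-bookkeeping precise: specifically, exhibiting the $4$-manifold $W'$ (via a Kirby/surgery description of the infection cobordism) and proving that the inclusion-induced map $\pi_1(W\setminus\Delta)\to\pi_1(W'\setminus\Delta')$ behaves well enough on derived series that membership of the basis surfaces' fundamental groups in the $n$th (resp.\ $(n+1)$st) derived subgroup is preserved. Once that naturality is in hand, everything else — preservation of $H_1$, $H_2$, the intersection form and its definiteness, and the surface basis with its intersection pattern — is immediate, and the conclusions $K_{p,1}\in\F_n$ and $K_{p,1}\in\B_n$ follow directly from Definitions~\ref{defn:n-solvable} and~\ref{defn:bipolar}. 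In fact, since this is essentially the statement that infection by a doubling-type operator with an unknotted pattern preserves these filtrations, one could alternatively deduce it from Proposition~\ref{prop:infection} applied to the operator $P_\eta$ once one checks $P_\eta$ is a (degree-$p$ analogue of a) doubling operator; but the direct $4$-manifold argument above is cleaner and avoids the $\lk = 0$ hypothesis in the definition of doubling operator, which $P_\eta$ does not satisfy since $\lk(P,\eta) = p$.
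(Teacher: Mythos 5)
Your overall strategy---keep the homology and intersection-form data of $W$, replace only the slice disk, and finish with functoriality of the derived series---is the same as the paper's, but your execution introduces unnecessary machinery and rests on one false claim. First, no new $4$-manifold $W'$ is needed: since $K_{p,1}$ lies in a tubular neighborhood $N(K)\subset \partial W = S^3$ of the companion, the satellite operation does not change the ambient $S^3$ at all, and the paper simply shows that $W$ itself is an $n$-solution (resp.\ positon/negaton) for $K_{p,1}$. Second, the new disk is not obtained by ``tubing $\Delta$ with $p$ parallel copies of $\Delta_P$''; rather, $\Delta_{p,1}$ is $p$ parallel copies of $\Delta$ joined by bands, and crucially this is done \emph{inside} $N(\Delta)$. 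That containment $N(\Delta_{p,1})\subset N(\Delta)$ is what makes the $\pi_1$ bookkeeping trivial: it gives an honest inclusion $W\setminus N(\Delta)\hookrightarrow W\setminus N(\Delta_{p,1})$ carrying the surfaces $L_i, D_i$ (resp.\ $S_i$) with it, and any group homomorphism sends derived subgroups into derived subgroups. In your setup this containment is never established, and the justification you offer in its place---that the complement of the pattern $T_{p,1}$ in the solid torus is again a solid torus---is false for $p\ge 2$ (its first homology is $\Z^2$; it is the space $X$ of Figure~\ref{fig:cableexterior}). So the step you yourself flag as the ``main obstacle'' is a genuine gap as written, though it evaporates once the new disk is built inside a neighborhood of the old one. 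Your closing remark that Proposition~\ref{prop:infection} cannot be invoked directly because $\lk(P,\eta)=p\neq 0$ is correct.
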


\begin{proof}
By hypothesis $K$ bounds a properly embedded disk $\Delta\subset W$ where $W$ is an $n$-solution for $K$. We will show that $W$ is also an $n$-solution for $K_{p,1}$. By hypothesis, there exist smoothly embedded surfaces $\{L_1,\dots, L_k, D_1, \dots, D_k\}$ within $W\setminus N(\Delta)$ which form a basis for $H_2(W)$, such that $\pi_1(L_i)\subset \pi_1(W\setminus N(\Delta))^{(n)}$ and $\pi_1(D_i)\subset \pi_1(W\setminus N(\Delta))^{(n)}$ for all $i$, where $N(\Delta)$ is a tubular neighborhood of $\Delta$. Let $\Delta_{p,1}$ be the disk in $W$ bounded by $K_{p,1}$ obtained by taking $p$ parallel copies of $\Delta$ and joining them together by bands, within $N(\Delta)$. It only remains to show that $\pi_1(L_i)\subset \pi_1(W\setminus N(\Delta_{p,1}))^{(n)}$ and $\pi_1(D_i)\subset \pi_1(W\setminus N(\Delta_{p,1}))^{(n)}$ for all $i$, which follows from the functoriality of the derived series since~$W\setminus N(\Delta)\subset W\setminus N(\Delta_{p,1})$. An identical proof for $n$-positons and $n$-negatons gives the second statement.
\end{proof}

We end with the following proposition regarding the first-order signatures of knots obtained by infection.
\begin{proposition}[{\cite[Lemma~2.3]{Cochran-Harvey-Leidy:2009-1}}]\label{prop:rho-infection}
For any doubling operator $R_\alpha$ and knot $K$, there is an isomorphism $i_*:\A(R_\alpha(K))\to \A(R)$ such that for every submodule $P\subseteq \A(R_\alpha(K))$, 
\[
\rho(R_\alpha(K), \phi_{P})=
 \begin{cases} 
      \rho(R, \phi_{i_*(P)})+\rho_0(K) & \text{if }[\alpha]\notin i_*(P) \\
      \rho(R, \phi_{i_*(P)}) & \text{if }[\alpha]\in i_*(P).
   \end{cases}
\]
\end{proposition}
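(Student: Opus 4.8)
The plan is to establish the isomorphism $i_*$ of Alexander modules first, and then use the additivity and bordism properties (1) and (3) of von Neumann $\rho$-invariants to compute $\rho(R_\alpha(K),\phi_P)$ by constructing a suitable cobordism relating $M(R_\alpha(K))$ to $M(R)$ and $M(K)$. Recall that $R_\alpha(K)$ is the image of the pattern $R\subset S^3\setminus\alpha$ under the satellite construction with companion $K$, and since $\lk(R,\alpha)=0$, the solid torus $S^3\setminus N(\alpha)$ is glued to the knot exterior $E(K)$ along their boundary tori in a homologically trivial way. This makes the zero-framed surgeries fit together: $M(R_\alpha(K))$ is obtained from $M(R)$ by removing a neighborhood of $\alpha$ (now sitting inside $M(R)$, since $\lk(R,\alpha)=0$ means $\alpha$ is null-homologous and its zero-framing is inherited) and gluing in $E(K)$. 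The induced map on $H_1(\,\cdot\,;\Q[t^{\pm1}])$ sends the generator coming from $\alpha$ to the class $[\alpha]\in\A(R)$, and because $E(K)$ has trivial rational Alexander module (its $H_1$ with $\Q[t^{\pm1}]$-coefficients vanishes, as $\Delta_K(t)$ is a unit over $\Q(t)$ only after localization — more precisely $H_1(E(K);\Q[t^{\pm1}])$ is $\Q[t^{\pm1}]$-torsion and dies under the relevant identification), gluing in $E(K)$ does not change the Alexander module. This yields the isomorphism $i_*\colon \A(R_\alpha(K))\xrightarrow{\cong}\A(R)$.

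Next I would build the cobordism. Let $E=E(\alpha)\subset M(R)$ be the solid torus neighborhood of $\alpha$, and form a $4$-manifold $W$ by taking $\big(M(R)\setminus \Int E\big)\times[0,1]$ and gluing $E(K)\times[0,1]$ along $\partial E\times[0,1]$ via the satellite identification, together with $M(K)\times[0,1]$-type pieces; concretely, the standard approach (as in~\cite{Cochran-Orr-Teichner:2004-1, Cochran-Harvey-Leidy:2009-1}) is to take the cobordism $C$ whose boundary is $M(R_\alpha(K))\sqcup -M(R)\sqcup -M(K)$, built from $E(K)\times I$ glued into $M(R)\times I$ along a neighborhood of $\alpha$. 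Then extend the representation $\phi_P\colon\pi_1(M(R_\alpha(K)))\to \Z\ltimes\A(R_\alpha(K))/P$ over $\pi_1(C)$: on the $M(R)$ face it restricts to $\phi_{i_*(P)}$ (using the identification $i_*$), and on the $M(K)$ face the restriction factors through $\pi_1(M(K))\to\Z$ when $[\alpha]\notin i_*(P)$ — because the meridian of $K$ maps to the image of $[\alpha]$, which is nonzero in $\A(R)/i_*(P)$, so the full $\Z$-cover is seen — and factors through the trivial map on the $\A$-part when $[\alpha]\in i_*(P)$, so that the restriction to $M(K)$ is $\rho_0$-type in the first case and trivial in the second. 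Applying property (3) to $(C,\psi)$, noting that $\sigma^{(2)}(C,\psi)-\sigma(C)=0$ because $C$ is built from products and the $L^2$-signature of such a cobordism vanishes (the relevant $H_2$ contributions cancel, as in~\cite[proof of Lemma~2.3]{Cochran-Harvey-Leidy:2009-1}), property (1) and (2) give
\[
\rho(M(R_\alpha(K)),\phi_P)-\rho(M(R),\phi_{i_*(P)})-\rho(M(K),\psi|_{M(K)})=0.
\]
Finally, identify $\rho(M(K),\psi|_{M(K)})$: it equals $\rho_0(K)$ when $[\alpha]\notin i_*(P)$ by definition of $\rho_0$ together with property (4), and equals $0$ when $[\alpha]\in i_*(P)$ since then $\psi|_{M(K)}$ is trivial and $\rho$ of a manifold with the trivial representation vanishes (again property (4), factoring through the trivial group, plus $\rho(M,1)=0$). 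This gives the stated formula.

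The main obstacle I anticipate is the careful bookkeeping of the representation when restricted to the $M(K)$ piece: one must verify precisely that the image of a meridian of $K$ in $\A(R)/i_*(P)$ coincides (up to sign and the $\Z$-action) with the class $[\alpha]$, and hence that the condition "$[\alpha]\in i_*(P)$" is exactly the condition under which the restricted representation on $M(K)$ becomes metabelian-trivial on the derived subgroup, collapsing the $\rho_0(K)$ term. This hinges on the satellite formula for Alexander modules and the fact that the longitude of $K$ (glued to the meridian of $\alpha$) is null-homologous, so it maps trivially and does not interfere. The $L^2$-signature vanishing for the product-like cobordism $C$ is standard but also requires attention to make sure $\pi_1$ is tracked correctly; both points are handled in~\cite{Cochran-Harvey-Leidy:2009-1}, whose argument we follow.
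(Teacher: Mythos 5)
The paper does not prove this proposition; it is quoted verbatim from \cite{Cochran-Harvey-Leidy:2009-1}*{Lemma~2.3}, and your reconstruction follows exactly the standard argument of that reference: build the cobordism $C$ with $\partial C = M(R_\alpha(K))\sqcup -M(R)\sqcup -M(K)$ by gluing $M(K)\times[0,1]$ to $M(R)\times[0,1]$ along the surgery solid torus and a neighborhood of $\alpha$, extend $\phi_P$ over $\pi_1(C)$, observe that both signatures of $C$ vanish, and note that the restriction to $M(K)$ factors through $\Z$ via the meridian $\mapsto [\alpha]$, so it contributes $\rho_0(K)$ exactly when $[\alpha]\notin i_*(P)$ (using property (4)) and $0$ otherwise. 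That architecture and the case analysis are correct.

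One sub-justification is wrong, though it does not affect the conclusion. You assert that $H_1(E(K);\Q[t^{\pm1}])$ ``vanishes'' or is ``$\Q[t^{\pm1}]$-torsion'' under the relevant coefficient system. The coefficient system induced on $E(K)$ from $\pi_1(M(R_\alpha(K)))\to\Z$ is \emph{trivial} (the meridian of $K$ is glued to the longitude of $\alpha$, which is null-homologous since $\lk(R,\alpha)=0$), so $H_1(E(K);\Q[t^{\pm1}])\cong H_1(E(K);\Q)\otimes_\Q\Q[t^{\pm1}]\cong \Q[t^{\pm1}]$ is free of rank one, not zero and not torsion; the torsion module $\A(K)$ arises from $K$'s own abelianization, which is not the coefficient system in play here. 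The correct reason $i_*$ is an isomorphism is the Mayer--Vietoris argument: the free generator of $H_1(E(K);\Q[t^{\pm1}])$ (the meridian of $K$) is identified under the gluing with the longitude of $\alpha$, i.e.\ with the class $[\alpha]$ already present in $H_1(M(R)\setminus N(\alpha);\Q[t^{\pm1}])$, while the class of the meridian of $\alpha$ dies because it is glued to the longitude of $K$, which bounds a Seifert surface in $E(K)$. With that repair your argument is the standard one.
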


\subsection{Strong coprimality of sequences of Laurent polynomials}

Recall that an element $p(t)$ of $\Q[t,t^{-1}]$ is said to be \emph{prime} if it is non-zero, not a unit, and whenever~$p(t)$ divides $a(t)b(t)$ for some $a(t), b(t)\in\Q[t,t^{-1}]$, then either $p(t)$ divides $a(t)$ or $p(t)$ divides~$b(t)$. Note that $\Q[t,t^{-1}]$ is a Euclidean domain, where the size function is given by the difference in the degrees of the lowest and highest terms in a given Laurent polynomial. Thus, an element in $\Q[t,t^{-1}]$ is prime if and only if it is irreducible. Recall that the units in~$\Q[t,t^{-1}]$ are exactly the non-zero monomials. As usual, two non-zero elements $p(t)$ and~$q(t)$ of $\Q[t,t^{-1}]$ are said to be \emph{coprime}, denoted by $(p,q)=1$, if their greatest common divisor is a unit in $\Q[t,t^{-1}]$. 

We now give a notion of strong primality and strong coprimality of polynomials, and more generally, sequences of polynomials.

\begin{definition}{{\cite[Definition~4.4 and Definition~6.1]{Cochran-Harvey-Leidy:2011-1}}} A 
non-zero element $p(t)\in\mathbb{Q}[t,t^{-1}]$ is said to be \emph{strongly prime} or \emph{strongly irreducible} if $p(t^k)$ is prime (equivalently, irreducible) in $\mathbb{Q}[t, t^{-1}]$ for every non-zero integer $k$.

Two non-zero elements $p(t), q(t) \in \mathbb{Q}[t, t^{-1}]$ are said to be \emph{strongly coprime}, denoted $\widetilde{(p, q)} = 1$, if, for every pair of non-zero integers $k$ and $\ell$, $p(t^k)$ and $q(t^\ell)$ are coprime in~$\mathbb{Q}[t, t^{-1}]$. 

Given $\mathcal{P} = (p_1(t), \ldots , p_n(t))$ and $\mathcal{Q} = (q_1(t), \ldots , q_n(t))$, two sequences of non-zero elements in $\mathbb{Q}[t,t^{-1}]$, we say that $\mathcal{P}$ is \emph{strongly coprime} to $\mathcal{Q}$ if either $(p_1, q_1) = 1$, or, for some~$i > 1$, $\widetilde{(p_i , q_i )} = 1$.
\end{definition}

\begin{remark}\label{rem:primality}
 Note that if a non-monomial polynomial $p(t)$ is prime in $\Q[t]$, then it is prime in $\Q[t,t^{-1}]$. Since $\Q[t]$ is a principal ideal domain, $p(t)$ is prime in $\Q[t]$ if and only if it is irreducible in $\Q[t]$. It follows from Gauss's Lemma (see for example \cite[Section~9.3, Proposition~5]{DummitFoote}) that if a polynomial with integer coefficients is irreducible in $\Z[t]$ then it is irreducible in $\Q[t]$.  As a result, if a non-monomial polynomial with integer coefficients is irreducible in $\Z[t]$, then it is prime in $\Q[t,t^{-1}]$.
\end{remark}

\section{Robust doubling operators and cabling}\label{sec:robust}

In~\cite{Cochran-Harvey-Leidy:2011-1}, Cochran-Harvey-Leidy show that knots obtained by $n$-fold iterated infection by certain \emph{robust doubling operators} are linearly independent in $\Fn/\Fnpointfive$. The goal of this section is to show that given a robust doubling operator, the operator given by its~$(p,1)$ cable is also robust, under a mild condition on the Alexander polynomials related to the integer~$p$.   We begin by recalling the definition of a robust doubling operator.  

\begin{definition}[{\cite[Definition~7.2]{Cochran-Harvey-Leidy:2011-1}}]\label{def:robust} A doubling operator $R_\alpha \colon \mathcal{C} \rightarrow \mathcal{C}$ is said to be \emph{robust} if 
\begin{enumerate}
\item $\mathcal{A}(R)$ is generated by $[\alpha]$, with 
\[
\mathcal{A}(R) \cong \frac{\mathbb{Q}[t,t^{-1}]}{\langle\delta(t)\cdot\delta(t^{-1})\rangle}
\]
where $\delta(t)$ is prime in $\Q[t,t^{-1}]$, and
\item for each isotropic submodule $P\subseteq A(R)$, either the first-order signature $\rho(M(R),\phi_P)$ is non-zero or $P$ corresponds to a ribbon disk for $R$.
\end{enumerate}
\end{definition}

\begin{remark}\label{rem:3submodules}
Note that the Alexander module of a knot $R$ with~$\mathcal{A}(R) \cong \frac{\mathbb{Q}[t,t^{-1}]}{\langle\delta(t)\cdot\delta(t^{-1})\rangle}$ with~$\delta(t)$ prime in $\Q[t,t^{-1}]$ has at most three proper submodules, namely $\langle 0\rangle$, $\langle \delta(t)\rangle$, and~$\langle \delta(t^{-1})\rangle$. This follows since $\delta(t)$ is prime in $\Q[t,t^{-1}]$ iff $\delta(t^{-1})$ is. Note that the submodules $\langle \delta(t)\rangle$ and $\langle \delta(t^{-1})\rangle$ coincide if $\delta(t)$ and $\delta(t^{-1})$ differ by multiplication by a unit but otherwise, have trivial intersection.

If $R$ is ribbon, it follows that one of the nontrivial proper submodules $\langle \delta(t)\rangle$ and $\langle \delta(t^{-1})\rangle$ must be a Lagrangian which corresponds to a ribbon disk. It is straightforward to check using the properties of the Blanchfield form that the other nontrivial proper submodule is also a Lagrangian. Thus, 
\[
\mathcal{FOS}(R)=\left\{\rho\left(M(R),\phi_{\langle 0\rangle}\right),\rho\left(M(R),\phi_{\langle \delta(t)\rangle}\right),\rho\left(M(R),\phi_{\langle \delta(t^{-1})\rangle}\right)\right\}.
\]
\end{remark}

\begin{figure}[htb]
\labellist
\small\hair 2pt
\pinlabel $R^{k,J}$ at 77 -10
\pinlabel $J$ at 75 43
\pinlabel $\beta_k$ at 145 62
\pinlabel $\alpha_k$ at 37 62
\pinlabel $-k$ at 18 43
\pinlabel $k+1$ at 130 43
\pinlabel $\eta$ at 105 23
\pinlabel $d$ at 127 12

\pinlabel $J$ at 237 43
\pinlabel $\alpha'_k$ at 197 62
\pinlabel $-k$ at 178 43
\pinlabel $k+1$ at 295 43

\pinlabel $R^{k,J}_{2,1}$ at 237 -10
\endlabellist
\centering
\includegraphics{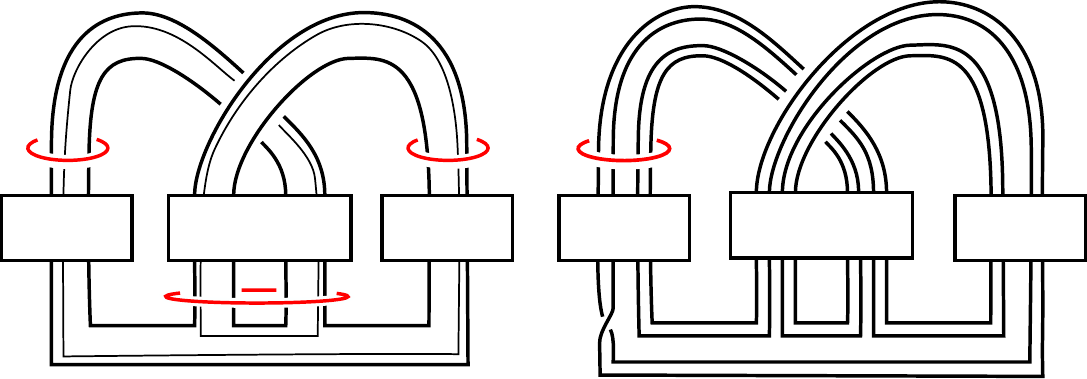}
\vspace*{5mm}
\caption{Each solid box containing an integer indicates the number of full right-handed twists among all the strands passing through the box. Each solid box containing $J$ indicates that all the strands passing through the box are tied into $0$-framed parallels of the knot $J$.  Left: The doubling operator $R_{\alpha_k}^{k,J}$. Right: The doubling operator $\left(R^{k,J}_{2,1}\right)_{\alpha_k'}$. }\label{fig:robust-with-ambiguity}
\end{figure}

Let $R_{\alpha_k}^{k,J}$ denote the doubling operator on the left hand side of Figure~\ref{fig:robust-with-ambiguity}. A direct computation shows that the Alexander polynomial of $R^{k,J}$ is 
\[
\Delta_{R^{k,J}}(t) = \left(kt-(k+1)\right)\left((k+1)t-k\right).
\] 
We now show that these doubling operators are robust for many choices of $J$ and $k\geq 1$. These are the operators we use for the proof of Theorem~\ref{thm:cables-indep}.

\begin{proposition}~\label{prop:robust-with-ambiguity} 
For $k\geq 1$, and any knot $J$ with $-\rho_0(J)\notin \mathcal{FOS}(R^{k,U})$, the doubling operator $R_{\alpha_k}^{k,J}$ is robust.
\end{proposition}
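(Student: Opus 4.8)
The plan is to verify the two clauses of Definition~\ref{def:robust} for $R^{k,J}_{\alpha_k}$, using throughout that $R^{k,J}$ is the result of a single infection of $R^{k,U}$ with $J$ along the curve $\eta$ pictured in Figure~\ref{fig:robust-with-ambiguity}, together with Proposition~\ref{prop:rho-infection}. I would begin by recording two facts visible from the figure: that $R^{k,J}$ is ribbon for every choice of $J$ (so that $R^{k,J}_{\alpha_k}$ is genuinely a doubling operator), and that $[\eta]$ is a \emph{nonzero} element of $\A(R^{k,U})$ (so the infection is nontrivial). Note also that $R^{k,U}$, being $R^{k,J}$ with $J$ the unknot, is itself ribbon.

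\emph{Clause (1).} The genus one Seifert surface of $R^{k,J}$ apparent in Figure~\ref{fig:robust-with-ambiguity} has a Seifert matrix $V$ containing a $\pm 1$ entry, so the presentation matrix $tV - V^{T}$ of the rational Alexander module collapses, via that unit entry, to a $1\times 1$ matrix; hence $\A(R^{k,J})$ is cyclic, generated by $[\alpha_k]$, of order $\det(tV-V^{T})\doteq\Delta_{R^{k,J}}(t)=(kt-(k+1))((k+1)t-k)$. (Alternatively one can transport this along the isomorphism $i_*\colon\A(R^{k,J})\to\A(R^{k,U})$ of Proposition~\ref{prop:rho-infection}, which sends $[\alpha_k]$ to $[\alpha_k]$.) Writing $\delta(t):=kt-(k+1)$, this is a degree one element of $\Q[t,t^{-1}]$, hence irreducible, hence prime; and $\delta(t^{-1})\doteq(k+1)t-k$, whose root $\tfrac{k}{k+1}$ differs from the root $\tfrac{k+1}{k}$ of $\delta(t)$ since $k\ge 1$, so $\delta(t)$ and $\delta(t^{-1})$ are non-associate and $\A(R^{k,J})\cong\Q[t,t^{-1}]/\langle\delta(t)\delta(t^{-1})\rangle$ with $\delta$ prime, as required.

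\emph{Clause (2).} Since $R^{k,J}$ and $R^{k,U}$ are ribbon knots with this cyclic Alexander module, Remark~\ref{rem:3submodules} shows that on each the isotropic submodules are exactly $\langle 0\rangle$ and the two Lagrangians $\langle\delta(t)\rangle,\langle\delta(t^{-1})\rangle$, which are distinct and meet trivially; and $i_*$, being a module isomorphism, carries isotropic submodules to isotropic submodules. Fix an isotropic $P\subseteq\A(R^{k,J})$. If $[\eta]\notin i_*(P)$, then Proposition~\ref{prop:rho-infection} gives
\[
\rho(M(R^{k,J}),\phi_P)=\rho(M(R^{k,U}),\phi_{i_*(P)})+\rho_0(J),
\]
and since $i_*(P)$ is isotropic we have $\rho(M(R^{k,U}),\phi_{i_*(P)})\in\mathcal{FOS}(R^{k,U})$, so the hypothesis $-\rho_0(J)\notin\mathcal{FOS}(R^{k,U})$ forces $\rho(M(R^{k,J}),\phi_P)\ne 0$. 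If instead $[\eta]\in i_*(P)$, then $P\ne\langle 0\rangle$, so $P$ is one of the two Lagrangians, and I claim it is the one corresponding to a ribbon disk for $R^{k,J}$: since $R^{k,J}$ is ribbon it bounds a ribbon disk $\Delta$ with associated Lagrangian $P_\Delta$ and $\rho(M(R^{k,J}),\phi_{P_\Delta})=0$, while if $P_\Delta$ were the Lagrangian \emph{other} than $P$ we would have $[\eta]\notin i_*(P_\Delta)$ (the two Lagrangians meet trivially and $[\eta]\ne 0$), and the computation above would give $\rho(M(R^{k,J}),\phi_{P_\Delta})\ne 0$, a contradiction; hence $P_\Delta=P$. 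In either case the alternative required by Definition~\ref{def:robust}(2) holds, and $R^{k,J}_{\alpha_k}$ is robust.

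The step that demands the most care is this last case: a priori one knows neither which of $\langle\delta(t)\rangle,\langle\delta(t^{-1})\rangle$ contains $[\eta]$ nor which one corresponds to a ribbon disk for $R^{k,J}$, but the $\rho$-invariant computation pins down that these two submodules coincide, so the argument closes uniformly. Apart from this, the only non-formal ingredients are the three facts read off Figure~\ref{fig:robust-with-ambiguity} (the infection description, the Seifert matrix, and the ribbon-ness of $R^{k,J}$ for all $J$); everything else is bookkeeping with $\rho$-invariants via Proposition~\ref{prop:rho-infection} and the submodule structure of the Alexander module.
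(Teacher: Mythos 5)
Your proof is correct and follows essentially the same route as the paper's: both verify clause (1) by a Seifert matrix computation and clause (2) by combining Proposition~\ref{prop:rho-infection}, the hypothesis $-\rho_0(J)\notin\mathcal{FOS}(R^{k,U})$, and the three-submodule structure of Remark~\ref{rem:3submodules}. The only difference is organizational: the paper identifies the ribbon Lagrangian explicitly as $\langle d\rangle$ for the unknotted derivative $d$ visible in Figure~\ref{fig:robust-with-ambiguity} and deduces $[\eta]\in\langle d\rangle$ from the vanishing of $\rho(M(R^{k,J}),\phi_{\langle d\rangle})$, whereas you leave the ribbon Lagrangian anonymous and run the same computation by contradiction to show it must be the submodule containing $[\eta]$ --- note that this same curve $d$ is also the justification for your asserted (but unexplained) fact that $R^{k,J}$ is ribbon for every $J$.
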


Since $\mathcal{FOS}(R^{k,U})$ is a finite set, $J$ can be taken to be the connected sum of an appropriately large number of knots with nontrivial $\rho_0$, for example, the connected sum of many right-handed trefoils. 

\begin{proof}[Proof of Proposition~\ref{prop:robust-with-ambiguity}]
Note that, for any knot $J$, the curve $d$ shown on the left hand side of Figure~\ref{fig:robust-with-ambiguity} is an unknotted derivative for $R^{k,J}$, and as a result, $R^{k,J}$ is ribbon. It is well-known that $\A(R^{k,J})$ is generated by $\{ [\alpha_k],[\beta_k]\}$. A direct Seifert matrix computation shows that $[\beta_k]=-k(1-t)[\alpha_k]$, and thus, $[\alpha_k]$ generates $\A(R^{k,J})$. Let $\delta_k(t) = kt-(k+1)$. Since $\Delta_{R^{k,J}}(t) = \delta_k(t^{})\delta_k(t^{-1})$, we see that $\A(R^{k,J})\cong\frac{\mathbb{Q}[t,t^{-1}]}{\langle\delta_k(t)\cdot\delta_k(t^{-1})\rangle}$. Notice also that $\delta_k(t)$ and $\delta_k(t^{-1})$ are prime and coprime if $k\neq 0,-1$. 

It remains to verify the last condition in the definition of a robust doubling operator. Since $R^{k,U}$ is ribbon, the set $\mathcal{FOS}(R^{k,U})$ contains zero, and thus, $\rho_0(J)\neq 0$ by assumption. From Remark~\ref{rem:3submodules}, since $\delta_k(t)$ and $\delta_k(t^{-1})$ are prime and coprime, we see that there are three isotropic submodules of $\A(R^{k,J})$, where the two nontrivial submodules have trivial intersection. We need to show that the three associated first-order signatures are either non-zero or correspond to ribbon disks for $R^{k,J}$.

Since $d$ is an unknotted derivative for $R^{k,J}$ for any $J$, we see that $\langle d \rangle$ corresponds to a ribbon disk for $R^{k,J}$ and thus, $\rho(M(R^{k,J}),\phi_{\langle d\rangle})=0$ by Remark~\ref{rmk:metalag}. As $R^{k,J} = R^{k,U}_{\eta}(J)$, there is a natural identification $i_*:\A(R^{k,J})\to \A(R^{k,U})$, sending $\langle d\rangle \subset \A(R^{k,J})$ to $\langle d\rangle\subset   \A(R^{k,U})$. In a slight abuse we have not included the dependence of $d$ on $J$ in our notation.  Since $\rho(M(R^{k,J}),\phi_{\langle d\rangle})=\rho(M(R^{k,U}),\phi_{\langle d\rangle})=0$ and $\rho_0(J)\neq 0$, Proposition~\ref{prop:rho-infection} implies that $[\eta]\in \langle d\rangle\subset \A(R^{k,U})$.  

From a Seifert matrix computation, it is easy to see that $[\eta]\neq 0$ in $\A(R^{k,U})$. Let $P$ be the nontrivial isotropic submodule of $\A(R^{k,J})$ which is not $\langle d\rangle$. As noted above, we know that $P\cap \langle d\rangle=\{0\}$, and thus, $[\eta]\notin i_*(P)$. From Proposition~\ref{prop:rho-infection}, we know that 
\[
\rho(M(R^{k,J}),\phi_P)= \rho(M(R^{k,U}),\phi_{i_*(P)})+\rho_0(J)\]
and 
\[
\rho(M(R^{k,J}),\phi_{\langle 0\rangle})= \rho(M(R^{k,U}),\phi_{\langle 0\rangle})+\rho_0(J).
\]
By hypothesis, $-\rho_0(J)\neq \rho(M(R^{k,U}),\phi_{i_*(P)})$, and thus, $\rho(M(R^{k,J}),\phi_P)\neq 0$. Similarly, since $-\rho_0(J)\neq \rho(M(R^{k,U}),\phi_{\langle 0\rangle})$, we see that $\rho(R^{k,J},\phi_{\langle 0\rangle})\neq 0$. \end{proof}

While the above robust doubling operators will be sufficient for our proof of Theorem~\ref{thm:cables-indep}, we will need a different collection of robust doubling operators for Theorem~\ref{thm:summand}. As an added benefit, these new operators will be completely explicit, without any ambiguity such as in the definition of $R^{k,J}_{\alpha_k}$ arising from the choice of the knot $J$. These new operators are denoted $Q_{\alpha_k}^{k}$, for $k\geq 1$, and are shown in Figure~\ref{fig:robust}. A direct computation shows that the Alexander polynomial of $Q^k$ is 
\[
\Delta_{Q^k}(t) = \left(kt-(k+1)\right)\left((k+1)t-k\right).
\] 
We now show that these doubling operators are robust for $k\geq 3$ (see also~\cite[Example~7.3]{Cochran-Harvey-Leidy:2011-1} and~\cite[Theorem~7.2.1]{Davis:2012-1}). Note that our examples are the mirror images of the examples in~\cite{Cochran-Harvey-Leidy:2011-1, Davis:2012-1}.

\begin{figure}[htb]
\labellist
\small\hair 2pt
\pinlabel $Q^k$ at 70 -10
\pinlabel $\alpha_k$ at 102 23
\pinlabel $k$ at 67 40
\pinlabel $\eta_+$ at 140 58
\pinlabel $\eta_-$ at 30 58

\endlabellist
\centering
\includegraphics{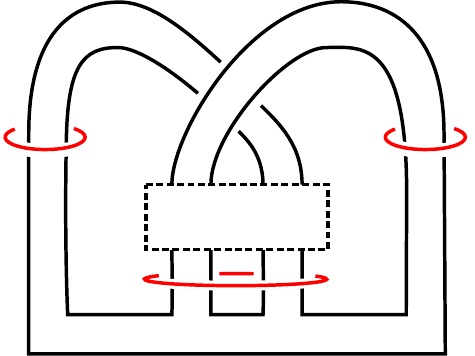}
\vspace*{5mm}
\caption{The  doubling operator $Q_{\alpha_k}^{k}$ (see Figure~\ref{fig:infection} for the case $k=1$). The dashed box containing $k$ indicates that the \emph{bands} passing through the box have $k$ full twists rather than all the strands. }\label{fig:robust}
\end{figure}

\begin{proposition}~\label{prop:robust} 
For $k\geq 3$, $Q_{\alpha_k}^{k}$ is a robust doubling operator.
\end{proposition}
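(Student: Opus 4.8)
The plan is to follow the same template as the proof of Proposition~\ref{prop:robust-with-ambiguity}, exploiting the fact that $Q^k$ is a ribbon knot with the same Alexander polynomial $\Delta_{Q^k}(t) = (kt-(k+1))((k+1)t-k) = \delta_k(t)\delta_k(t^{-1})$, where $\delta_k(t) = kt-(k+1)$. First I would establish condition (1) of Definition~\ref{def:robust}: from a Seifert matrix computation for the surface visible in Figure~\ref{fig:robust}, verify that $\mathcal{A}(Q^k)$ is cyclic, generated by $[\alpha_k]$, so that $\mathcal{A}(Q^k)\cong \Q[t,t^{-1}]/\langle \delta_k(t)\delta_k(t^{-1})\rangle$; since $\delta_k(t)$ is linear it is automatically prime in $\Q[t,t^{-1}]$, and for $k\neq 0,-1$ one checks $\delta_k(t)$ and $\delta_k(t^{-1})$ are coprime. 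By Remark~\ref{rem:3submodules}, $Q^k$ being ribbon then forces $\mathcal{A}(Q^k)$ to have exactly three proper submodules $\langle 0\rangle$, $\langle\delta_k(t)\rangle$, $\langle\delta_k(t^{-1})\rangle$, all Lagrangian, with the two nontrivial ones intersecting trivially.

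Next I would verify condition (2), that each of the three first-order signatures is either nonzero or corresponds to a ribbon disk. The knot $Q^k$ sits as an infection $Q^k = (\text{ribbon knot})_{\eta_\pm}(\cdot)$ — more precisely, examining Figure~\ref{fig:robust}, there is an obvious unknotted derivative curve on the genus one Seifert surface whose associated Lagrangian corresponds to a ribbon disk, so by Remark~\ref{rmk:metalag} one of the three first-order signatures vanishes ``for a good reason.'' The real work is to compute (or bound away from zero) $\rho(M(Q^k),\phi_{\langle 0\rangle})$ and $\rho(M(Q^k),\phi_P)$ for the other nontrivial isotropic submodule $P$. This is where I expect the hypothesis $k\geq 3$ to enter: the relevant von Neumann $\rho$-invariants are integrals of Levine--Tristram-type signature functions of auxiliary knots/links built into the pattern $Q^k$ (the curves $\eta_+$ and $\eta_-$), and for small $k$ these integrals can vanish while for $k\geq 3$ the twisting is large enough to guarantee a nonzero signature integral. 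I would make this precise by identifying the $3$-manifold $M(Q^k)$ together with the representation $\phi_{\langle 0\rangle}$ (resp.\ $\phi_P$) as bounding a standard $4$-manifold built from the infection data, applying property (3) of von Neumann $\rho$-invariants to reduce to an $L^2$-signature computation, and then matching this with the ordinary signature integral $\rho_0$ of a cable or twist knot appearing in the construction, invoking the Litherland-type signature formula quoted in the introduction. Citing \cite[Example~7.3]{Cochran-Harvey-Leidy:2011-1} and \cite[Theorem~7.2.1]{Davis:2012-1}, where the mirror images of these operators are treated, one knows the sign conventions work out so that for $k\geq 3$ all relevant signatures are strictly negative (hence nonzero after mirroring).

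The main obstacle will be the explicit signature computation showing nonvanishing of the two first-order signatures for $k\ge 3$: unlike in Proposition~\ref{prop:robust-with-ambiguity}, where an auxiliary knot $J$ with arbitrarily large $\rho_0$ could be fed in to push the signatures away from zero by fiat, here the operator is rigid and one must genuinely evaluate a signature integral attached to the fixed pattern. I would handle this by reducing, via Proposition~\ref{prop:rho-infection} applied to the internal infection structure of $Q^k$ along $\eta_\pm$, to the values $\rho_0$ of the infecting curves — which are themselves $(\pm 1)$-twist knots or their cables whose signature functions are classically known — and checking that the arithmetic of the three resulting real numbers (one of which is $0$ and corresponds to a ribbon disk) leaves the other two nonzero precisely when $k\geq 3$. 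Since this computation is carried out in the cited references for the mirror operators, I would either reproduce the short version or simply invoke it, noting that mirroring negates all $\rho$-invariants and hence preserves the nonvanishing conclusion.
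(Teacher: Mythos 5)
Your treatment of condition (1) matches the paper's: a Seifert matrix computation shows $\A(Q^k)\cong\Q[t,t^{-1}]/\langle\delta_k(t)\delta_k(t^{-1})\rangle$ is generated by $[\alpha_k]$ (the paper gets this as $\Q[t,t^{-1}]/\langle\delta_k(t)\rangle\oplus\Q[t,t^{-1}]/\langle\delta_k(t^{-1})\rangle$ generated by the band duals $\eta_\pm$, with $[\alpha_k]=[\eta_+]+[\eta_-]$ generating because the two orders are coprime). For condition (2), however, your setup is off in a way that matters. \emph{Both} nontrivial proper submodules $\langle\eta_+\rangle$ and $\langle\eta_-\rangle$ correspond to ribbon disks, obtained by cutting the band dual to $\eta_+$ resp.\ $\eta_-$; so the only first-order signature that has to be shown nonzero is $\rho(M(Q^k),\phi_{\langle 0\rangle})$. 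You instead posit one ``good'' submodule and propose to prove nonvanishing for the other nontrivial $P$ as well --- but that signature is actually $0$ (it corresponds to a ribbon disk), so that half of your program cannot succeed and, fortunately, is not needed.

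More seriously, your proposed mechanism for the remaining nonvanishing does not apply. $Q^k$ has no internal untwisted infection structure along $\eta_\pm$ with nontrivial companions: the $k$ in Figure~\ref{fig:robust} denotes $k$ full twists in a \emph{band} of the Seifert surface, i.e.\ a twisted satellite/framing change, not an infection with linking number zero, so Proposition~\ref{prop:rho-infection} cannot be invoked to reduce $\rho(M(Q^k),\phi_{\langle 0\rangle})$ to $\rho_0$ of twist knots. (This is exactly why $Q^k$ is harder than $R^{k,J}$ in Proposition~\ref{prop:robust-with-ambiguity}, where a genuine infection curve $\eta$ lets one feed in $\rho_0(J)$ by fiat.) The actual argument, from \cite[Theorem~7.2.1]{Davis:2012-1}, is of a different nature: an additivity result produces a Lagrangian $P$ of $\A(Q^k\#Q^k)$ with $\rho(M(Q^k\#Q^k),\phi_P)=2\rho(M(Q^k),\phi_{\langle 0\rangle})$, and this quantity is then bounded away from zero using the Cimasoni--Florens signature of a derivative link generating $P$, estimated explicitly via a $C$-complex; the hypothesis $k\geq 3$ enters in that estimate, not through a Litherland-type cabling formula. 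Your fallback of simply citing \cite{Davis:2012-1} is legitimate --- the paper itself does essentially that --- but your description of what the cited computation does is not accurate, and the route you sketch for reproducing it would fail.
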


\begin{proof} 
Let $\delta_k(t) = kt-(k+1)$. By a direct computation using the obvious genus one Seifert surface visible in Figure~\ref{fig:robust}, we see that 
\[
\mathcal{A}(Q^k)=\frac{\mathbb{Q}[t,t^{-1}]}{\langle\delta_k(t)\rangle}\oplus \frac{\mathbb{Q}[t,t^{-1}]}{\langle\delta_k(t^{-1})\rangle},
\]
where the generators are dual curves to the bands of this Seifert surface, namely the curves~$\eta_\pm$ in the figure. As before, for $k\notin\{-1,0\}$, $\delta_k(t)$ and $\delta_k(t^{-1})$ are prime and coprime. As a result, we see that $\A(Q^k)\cong\frac{\mathbb{Q}[t,t^{-1}]}{\langle\delta_k(t)\cdot\delta_k(t^{-1})\rangle}$ is generated by $[\alpha_k]=[\eta_+]+[\eta_-]$ (where as usual we are suppressing the orientations of the curves). 

It remains to verify the last condition in the definition of a robust doubling operator. We consider the three proper submodules $\langle 0 \rangle$ and $\langle \eta_\pm \rangle$, as noted in Remark~\ref{rem:3submodules}. Each of the submodules $\langle \eta_\pm \rangle$ corresponds to a ribbon disk $\Delta_\pm$ for $Q^{k}$ where $\Delta_+$ (resp.\ $\Delta_-$) is a disk obtained by cutting the band passing through $\eta_+$ (resp.\ $\eta_-$). We then only need to check that ~$\rho(M(Q^k),\phi_{\langle 0\rangle}) \neq 0$. In the earlier work of~\cite{Cochran-Harvey-Leidy:2011-1}, the  authors bypassed this computation by leaving an ambiguity in their doubling operators, consisting of a choice of whether or not to tie a trefoil in the left-hand band. This ambiguity was removed by the first author in his PhD thesis~\cite[Theorem~7.2.1]{Davis:2012-1}; we briefly sketch his argument now. Using an additivity result for first-order signatures \cite[Proposition~3.2.7]{Davis:2012-1}, he shows that there is a Lagrangian $P$ of $\A(Q^k\# Q^k)$ for which $\rho(M(Q^k\# Q^k), \phi_P)  = 2\rho(M(Q^k),\phi_{\langle 0\rangle})$. Then, he uses his result~\cite[Theorems~4.1.4 and~5.3.7]{Davis:2012-1} giving a bound on $\rho(M(Q^k\#Q^k),\phi_{P})$ in terms of the Cimasoni-Florens signature~\cite{Cimasoni-Florens:2008-1} of any derivative $d$ generating $P$. After finding a derivative generating~$P$, the proof reduces to a fairly explicit estimation of the Cimasoni-Florens signature of a link using a $C$-complex. \end{proof}

Given a doubling operator $R_\alpha$, let $(R_{p,1})_{\alpha '}$ denote the doubling operator obtained by taking the $(p,1)$ cable of $R_\alpha$ for some integer $p$, as shown in Figure~\ref{fig:robust-with-ambiguity}. More precisely, we consider the ribbon knot $R_{p,1}$ and the curve $\alpha '$ which is the image of $\alpha$ in $S^3\setminus R_{p,1}$. Observe that for any knot $K$, the $(p,1)$ cable of $R_\alpha(K)$ is isotopic to $(R_{p,1})_{\alpha'}(K)$. The following is the key technical result of this paper, which states that $(R_{p,1})_{\alpha'}$ is often robust when $R_\alpha$ is robust. 

\begin{theorem}~\label{prop:cablerobust} Let $R_\alpha$ be a robust doubling operator and let $\Delta_R(t) = \delta(t) \cdot \delta(t^{-1})$ be the Alexander polynomial of $R$. Let $p$ be a non-zero integer. If $\delta(t^p)$ is prime in $\Q[t,t^{-1}]$, then~$\left(R_{p,1}\right)_{\alpha'}$ is a robust doubling operator.
\end{theorem}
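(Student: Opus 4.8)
The plan is to verify the two defining conditions of a robust doubling operator for $\left(R_{p,1}\right)_{\alpha'}$ directly, leveraging the hypothesis on $\delta(t^p)$ together with the structure of the satellite operation.

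First I would analyze the Alexander module. Since $R$ is ribbon and $R_\alpha$ is a doubling operator, $R_{p,1}$ is also ribbon (performing ambient surgery on the obvious genus one surface for $R_{p,1}$ using the ribbon disk for $R$), and a standard satellite computation gives $\Delta_{R_{p,1}}(t) = \Delta_R(t^p) = \delta(t^p)\delta(t^{-p})$. The key point is that the winding number of the cable pattern $T_{p,1}$ is $p$, so there is a surjection (in fact, one sees an isomorphism onto its image is the wrong picture; rather) the infinite cyclic cover of $S^3\setminus R_{p,1}$ is built from $p$ copies of the infinite cyclic cover of $S^3\setminus R$ glued in a chain, which has the effect of replacing $t$ by $t^p$ throughout. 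Concretely, I would show $\A(R_{p,1})\cong \A(R)\otimes_{\Q[t^{\pm 1}], t\mapsto t^p}\Q[t^{\pm 1}] \cong \frac{\Q[t^{\pm 1}]}{\langle \delta(t^p)\delta(t^{-p})\rangle}$, generated by $[\alpha']$ (the image of the generator $[\alpha]$), using that $[\alpha]$ generates $\A(R)$. Since $\delta(t^p)$ is prime by hypothesis, and $\delta(t^{-p})$ is prime iff $\delta(t^p)$ is, condition (1) of Definition~\ref{def:robust} holds with $\delta$ replaced by $\delta(t^p)$.

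Next, for condition (2) I would use Remark~\ref{rem:3submodules}: $\A(R_{p,1})$ has at most three proper submodules, $\langle 0\rangle$, $\langle \delta(t^p)\rangle$, and $\langle \delta(t^{-p})\rangle$. I claim the map $\A(R_{p,1})\to\A(R)$ induced by the degree-one map $S^3\setminus R_{p,1}\to S^3\setminus R$ collapsing the cable (the satellite map) — equivalently, the algebraic map sending $t^p\mapsto t$ — carries these submodules to $\langle 0\rangle$, $\langle\delta(t)\rangle$, $\langle\delta(t^{-1})\rangle$ respectively, and carries $[\alpha']$ to $[\alpha]$. Since a ribbon disk $\Delta$ for $R$ with corresponding Lagrangian $P$ yields, after the $(p,1)$-cabling construction (take $p$ parallel copies of $\Delta$ banded together), a ribbon disk $\Delta_{p,1}$ for $R_{p,1}$ whose corresponding Lagrangian is the preimage of $P$, the submodules of $\A(R_{p,1})$ corresponding to ribbon disks are exactly the preimages of those corresponding to ribbon disks for $R$. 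For the first-order signatures: I would relate $\rho(M(R_{p,1}),\phi_{P'})$ to $\rho(M(R),\phi_{P})$ where $P'$ is the preimage of $P$. One approach is to build a cobordism $V$ from $M(R_{p,1})$ to $M(R)$ (the standard satellite cobordism, obtained from $M(R_\alpha(U)) = M(R)$... more carefully: $R_{p,1} = (R_{p,1})_{\alpha'}(U)$ and $R = R_\alpha(U)$, and there is a cobordism realizing the satellite relation) over the relevant metabelian quotient, and apply property (3) of von Neumann $\rho$-invariants to conclude that the $\rho$-invariants agree, or differ by a term that vanishes because $U$ is slice. This shows that the three first-order signatures of $R_{p,1}$ coincide with those of $R$ (for the corresponding submodules under the preimage correspondence), so condition (2) is inherited from $R_\alpha$.

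The main obstacle I expect is the precise identification of the metabelian $\rho$-invariants under the cabling operation — i.e., making rigorous the claim that $\rho(M(R_{p,1}),\phi_{P'}) = \rho(M(R),\phi_P)$ for corresponding submodules. The subtlety is that the relevant representation of $\pi_1(M(R_{p,1}))$ factors through $\Z\ltimes \A(R_{p,1})/P'$, and one must check that the satellite cobordism carries this compatibly to $\Z\ltimes\A(R)/P$; the $\Z$-factor is where the substitution $t\mapsto t^p$ is visible, and one must be careful that the longitude of the companion (here the unknot) is nullhomologous so that the cobordism has the right $\pi_1$. An alternative, possibly cleaner route is to observe that $R_{p,1} = (R_{p,1})_{\alpha'}(U)$ while simultaneously $R_{p,1}$ is a satellite with pattern $T_{p,1}$ and companion $R$, and to instead derive the $\rho$-computation from Proposition~\ref{prop:rho-infection} applied in a setting where one views $R$ itself as obtained by infection — but this requires care since $T_{p,1}$ is not a doubling operator (it has nonzero... well, winding number $p$, and $T_{p,1}$ infected with the unknot is $U$, which is fine, but the pattern is knotted for the purposes of being a "doubling operator" in the technical sense). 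I would pursue the direct cobordism argument as the primary strategy, falling back on an explicit $L^2$-signature computation via a $4$-manifold built from the trace of the cabling operation if the abstract argument proves delicate.
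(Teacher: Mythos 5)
Your plan follows the paper's proof almost step for step: identify $\A(R_{p,1})$ with $\A(R)\otimes_{\Q[t^{\pm1}],\,t\mapsto s^p}\Q[s^{\pm1}]$, match the three proper submodules and show that ribbon-disk Lagrangians for $R$ yield ribbon-disk Lagrangians for $R_{p,1}$ via $p$ banded-together parallel copies of the disk, and then prove $\rho(M(R_{p,1}),\phi_{Q'})=\rho(M(R),\phi_Q)$ using a cobordism $V$ between the two zero-surgeries. Two remarks. First, a small directional slip: there is no degree-one ``collapsing'' map $E(R_{p,1})\to E(R)$ inducing $t^p\mapsto t$; the map the argument actually uses is the inclusion $E(R)\hookrightarrow E(R_{p,1})$ (sending $t\mapsto s^p$), which by a Mayer--Vietoris comparison with the unknotted case is an isomorphism on first homology with $\Q[s^{\pm1}]$-coefficients, and the submodule correspondence is $Q\mapsto Q\otimes\Q[s^{\pm1}]$, not a preimage under a collapse. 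Second, the one substantive step you flag but do not carry out — that the correction term $\sigma^{(2)}(V,\psi)-\sigma(V)$ vanishes — is exactly where the work lies, and it is not ``because $U$ is slice.'' The paper builds $V$ explicitly by attaching a $1$-handle and a $0$-framed $2$-handle to $M(R)\times[0,1]$ (so $\sigma(V)=0$ since $H_2(M(R))\to H_2(V)$ is onto), checks that the coefficient system $\Gamma=\Z\ltimes\A(V)/Q'$ is PTFA and that the representations of $\pi_1(M(R))$ and $\pi_1(M(R_{p,1}))$ both extend compatibly over $V$ (an injectivity check via the five lemma), and then kills $\sigma^{(2)}(V,\psi)$ by the bound $|\sigma^{(2)}|\le\rk_{\K(\Gamma)}H_2(V;\K(\Gamma))$ together with a computation, using Cochran's results on homology with Ore-localized coefficients and $\chi(V)=0$, that $H_2(V;\K(\Gamma))=0$. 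So your fallback (``explicit $L^2$-signature computation via the trace of the cabling operation'') is in fact the primary and only route; to complete the proof you would need to supply that PTFA/skew-field homology computation.
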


We postpone the proof until the end of the section. For now, we obtain the following corollary. 

\begin{corollary}~\label{cor:robust2} 
For $k\geq 1$, $p\ge 1$, and any knot $J$ with $-\rho_0(J)\notin \mathcal{FOS}(R^{k,U})$, the doubling operator $\left(R_{p,1}^{k,J} \right)_{\alpha'_k}$ is robust.

For $k\ge 3$ and $p\ge 1$, the doubling operator $\left(Q^{k}_{p,1}\right)_{\alpha_k'}$ is robust.
\end{corollary}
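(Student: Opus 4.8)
The plan is to deduce Corollary~\ref{cor:robust2} by applying Theorem~\ref{prop:cablerobust} twice, once to each family of robust doubling operators supplied by Propositions~\ref{prop:robust-with-ambiguity} and~\ref{prop:robust}. Recall that both $R^{k,J}$ and $Q^k$ have Alexander polynomial $\Delta(t)=\delta_k(t)\delta_k(t^{-1})$ with $\delta_k(t)=kt-(k+1)$. By Theorem~\ref{prop:cablerobust}, once we know $R^{k,J}_{\alpha_k}$ (resp.\ $Q^k_{\alpha_k}$) is robust, the cabled operator $\left(R^{k,J}_{p,1}\right)_{\alpha_k'}$ (resp.\ $\left(Q^k_{p,1}\right)_{\alpha_k'}$) is robust as soon as $\delta_k(t^p)$ is prime in $\Q[t,t^{-1}]$. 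So the entire corollary reduces to the single polynomial fact: $\delta_k(t^p) = kt^p-(k+1)$ is prime in $\Q[t,t^{-1}]$ for all $k\geq 1$ and all $p\geq 1$.

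The key step, then, is this irreducibility statement, and I expect it to be the only real content of the proof. First I would clear denominators: $kt^p-(k+1)$ is a non-monomial integer polynomial, so by Remark~\ref{rem:primality} it suffices to show $kt^p-(k+1)$ is irreducible in $\Z[t]$, equivalently (being primitive, since $\gcd(k,k+1)=1$) irreducible in $\Q[t]$. The natural tool is a rational-root / Newton-polygon or Eisenstein-type argument: a binomial $at^p - b$ with $\gcd(a,b)=1$ is irreducible over $\Q$ precisely when $b/a$ is not a $q$-th power in $\Q$ for any prime $q \mid p$ (and, in the $4\mid p$ case, $-4b/a$ is not a fourth power); here $b/a = (k+1)/k$ in lowest terms with $k$ and $k+1$ coprime and both not perfect powers simultaneously — indeed if $(k+1)/k = (c/d)^q$ in lowest terms then $c^q = k+1$ and $d^q = k$, forcing consecutive perfect $q$-th powers $d^q$ and $d^q+1$, which is impossible for $q\geq 2$ and $k\geq 1$. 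The $4 \mid p$ exceptional case is likewise handled: $-4(k+1)/k$ a fourth power would need $k = e^4$ and $-4(k+1) = f^4 \le 0$, impossible. Alternatively, and perhaps more cleanly, one can cite the standard criterion for irreducibility of $X^p - c$ over a field (e.g.\ \cite[Theorem~VI.9.1]{Lang:algebra} or \cite[Section~14.3]{DummitFoote}): after the substitution $t\mapsto t$ and dividing by $k$, irreducibility of $kt^p-(k+1)$ over $\Q$ is equivalent to irreducibility of $t^p - (k+1)/k$ over $\Q$, which holds iff $(k+1)/k$ is not a $q$-th power for any prime $q\mid p$ and not $-4\times(\text{fourth power})$ when $4\mid p$.

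I anticipate the main (and essentially only) obstacle is bookkeeping in this number-theoretic criterion, specifically verifying the "not a perfect power" condition for the ratio of consecutive integers; the topological input is entirely packaged in Theorem~\ref{prop:cablerobust} and the two robustness propositions, so no further geometry is needed. Once the polynomial lemma is in hand, the proof is a two-line invocation of Theorem~\ref{prop:cablerobust}: apply it with $R = R^{k,J}$, which is robust by Proposition~\ref{prop:robust-with-ambiguity} under the hypothesis $-\rho_0(J)\notin\mathcal{FOS}(R^{k,U})$, to conclude $\left(R^{k,J}_{p,1}\right)_{\alpha_k'}$ is robust; and with $R=Q^k$, which is robust by Proposition~\ref{prop:robust} for $k\geq 3$, to conclude $\left(Q^k_{p,1}\right)_{\alpha_k'}$ is robust. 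One small point I would double-check is that the curve denoted $\alpha'$ in Theorem~\ref{prop:cablerobust} matches the curve denoted $\alpha_k'$ in Figure~\ref{fig:robust-with-ambiguity} and in the statement of the corollary — this is immediate from the construction of $(R_{p,1})_{\alpha'}$ given just before Theorem~\ref{prop:cablerobust}, since $\alpha_k'$ is by definition the image of $\alpha_k$ in $S^3\setminus R^{k,J}_{p,1}$ (resp.\ $S^3\setminus Q^k_{p,1}$).
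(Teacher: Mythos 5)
Your proposal is correct, and the topological part (reduce to the primality of $\delta_k(t^p)=kt^p-(k+1)$, then invoke Propositions~\ref{prop:robust-with-ambiguity} and~\ref{prop:robust} together with Theorem~\ref{prop:cablerobust}) is exactly the paper's reduction. Where you genuinely diverge is in how you prove that $kt^p-(k+1)$ is irreducible. The paper splits into two cases: for $k=8$ it applies the generalized Eisenstein criterion of Proposition~\ref{prop:Bonciocat} with the primes $2$ and $3$, and for $k\neq 8$ it argues by contradiction via the strong-primality criterion of Proposition~\ref{prop:BDcriterion}, which forces $k+1=x^a$ and $k=y^b$ with \emph{possibly different} exponents $a,b>1$; ruling this out requires the full Catalan equation $x^a-y^b=1$ and hence Mih\u{a}ilescu's theorem. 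You instead invoke the classical criterion for irreducibility of a binomial $X^p-c$ over a field, applied to $c=(k+1)/k$. Because $(k+1)/k$ is already in lowest terms, writing it as a $q$-th power forces $k=d^q$ and $k+1=c^q$ with the \emph{same} prime exponent $q$, and consecutive $q$-th powers with equal exponent $q\geq 2$ are ruled out by the elementary estimate $(d+1)^q-d^q\geq 1+qd>1$; the exceptional $-4\cdot(\text{fourth power})$ case when $4\mid p$ is vacuous by a sign check. So your route is more elementary and uniform in $k$ (no case split, no deep number theory), at the cost of not reusing the strong-primality machinery the paper has already set up and which it leans on elsewhere (strong coprimality enters Theorem~\ref{cor:lin-ind}). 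The one point you should spell out rather than assert is the impossibility of consecutive equal-exponent powers, but that is a one-line computation; and your identification of $\alpha_k'$ with the curve $\alpha'$ of Theorem~\ref{prop:cablerobust} is indeed immediate from the construction preceding that theorem.
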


Before giving the proof, we recall the following results. The first is a generalization of Eisenstein's criterion for irreducibility of polynomials in $\Z[t]$, and the second is a result of the first author and Bullock concerning strong primality of polynomials in $\Z[t]$. 

\begin{proposition}[{\cite[Theorem~1.1]{Bonciocat}}]\label{prop:Bonciocat} Let $f(t)=a_0+a_1t+\cdots +a_dt^d\in \Z[t]$, $a_0a_d\neq 0$, and let $q_1$ and $q_2$ be distinct prime integers. Let $r^i_j \in\N\cup\{\infty\}$ be the maximum such that~$q_i^{r^i_j}$ divides $a_j$, where by convention $r^i_j = \infty$ if $a_j = 0$. 

Suppose for each $i=1,2$, 
\[
r^i_j\geq \frac{d-j}{d}r^i_0 +\frac{j}{d}r^i_d
\]
for all $j=1,\dots, d-1$, and where exactly one of $r^i_0$ and $r^i_d$ is non-zero; let the non-zero one of the two be denoted by $\alpha_i$. If $\gcd(\alpha_1,d)$ and $\gcd(\alpha_2,d)$ are relatively prime, then $f(t)$ is irreducible in $\Z[t]$. 

Consequently, if $f(t)$ is not a monomial, it is prime in~$\Q[t,t^{-1}]$. 
\end{proposition}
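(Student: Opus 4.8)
The statement is a Newton-polygon strengthening of Eisenstein's criterion, and the plan is to run a $q_i$-adic Newton polygon argument for $i=1,2$ and then combine the two divisibility conditions that emerge. For a fixed prime $q_i$ let $v=v_{q_i}$ be the $q_i$-adic valuation on $\Q$ (so $v(q_i)=1$), and recall that the Newton polygon $N_i$ of $f$ with respect to $q_i$ is the lower convex hull of the points $\{(j,r^i_j):0\le j\le d\}$ (omitting $(j,\infty)$ when $a_j=0$). The first step is to observe that the hypotheses — that each $(j,r^i_j)$ lies on or above the segment from $(0,r^i_0)$ to $(d,r^i_d)$, that these two endpoints themselves occur among the $(j,r^i_j)$, and that exactly one of $r^i_0,r^i_d$ is non-zero — say precisely that $N_i$ is a \emph{single} line segment, of slope $-\alpha_i/d$ or $+\alpha_i/d$ according to which of $r^i_0,r^i_d$ equals $\alpha_i$.

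Next I would suppose, for contradiction, that $f$ factors in $\Q[t]$ as a product of two factors of positive degree; by Gauss's Lemma we may take the factors to be $g,h\in\Z[t]$ of degrees $e$ and $d-e$ with $0<e<d$. The key structural input here is the multiplicativity of Newton polygons: $N_i$ is the slope-ordered concatenation of the Newton polygons of $g$ and of $h$, so since $N_i$ has a single slope, $N(g)$ and $N(h)$ are single segments of that same slope. Reading off the slope of $N(g)$, and using that the leading and constant coefficients of $g$ divide those of $f$, one of which is a $v$-unit by hypothesis, one finds that $e\cdot\alpha_i/d$ is a non-negative integer, hence $\tfrac{d}{\gcd(\alpha_i,d)}\mid e$. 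Doing this for both $i=1,2$ gives $\operatorname{lcm}\!\bigl(\tfrac{d}{\gcd(\alpha_1,d)},\tfrac{d}{\gcd(\alpha_2,d)}\bigr)\mid e$, and since $\gcd(\alpha_1,d)$ and $\gcd(\alpha_2,d)$ are coprime this lcm equals $d$ — one checks this prime-by-prime in $d$: at most one of the two gcd's can absorb a given prime power of $d$, so one of the two quotients retains the full power. Thus $d\mid e$, contradicting $0<e<d$, so $f$ has no factorization into two positive-degree factors. Since $f$ is not a monomial it has positive degree, hence is irreducible in $\Q[t]$ (constants being units there), and by Remark~\ref{rem:primality} it is prime in $\Q[t,t^{-1}]$; irreducibility in $\Z[t]$ follows on also noting that the hypothesis forces $q_1,q_2\nmid\gcd(a_0,\dots,a_d)$.

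The main obstacle I anticipate is not the combinatorics but making the two structural facts about Newton polygons fully rigorous in the writeup: that the stated numerical inequalities genuinely force a single segment, and that the Newton polygon of a product is the slope-ordered concatenation of those of its factors. Both are standard, and the cleanest route is probably to phrase everything in terms of $v_{q_i}$-valuations of roots of $f$ in $\overline{\Q_{q_i}}$ — a single-slope polygon means all $d$ roots have $v$-value $\pm\alpha_i/d$, and this valuation data adds under factorization — which recovers the classical proof of Eisenstein's criterion as the special case $\alpha_i=1$, $\gcd(\alpha_i,d)=1$.
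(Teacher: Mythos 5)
The paper offers no proof of this proposition to compare against: it is quoted from Bonciocat, and the only part argued internally is the final sentence, which the paper derives from Remark~\ref{rem:primality} (Gauss's Lemma). Your Newton-polygon argument is the standard (Dumas-type) proof of the quoted criterion and is correct in substance. The hypotheses do say precisely that the $q_i$-adic Newton polygon of $f$ is the single segment from $(0,r^i_0)$ to $(d,r^i_d)$ (both endpoints are finite since $a_0a_d\neq 0$), of slope $\pm\alpha_i/d$; Dumas's multiplicativity theorem then forces any integer factor $g$ of degree $e$ with $0<e<d$ to have a single-segment polygon of the same slope with integer-valued endpoints $(0,v(g_0))$ and $(e,v(g_e))$ (note $g_0\neq 0$ because $a_0\neq 0$), whence $d\mid e\alpha_i$ and $\tfrac{d}{\gcd(\alpha_i,d)}\mid e$; and your prime-by-prime verification that the two coprimality conditions make the relevant lcm equal to $d$ is right. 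The one genuine dependency you must either cite or prove is Dumas's theorem itself (or the equivalent root-valuation formulation you sketch); that is standard.

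One caveat on your last clause: observing that $q_1,q_2\nmid\gcd(a_0,\dots,a_d)$ does not bridge the gap from $\Q[t]$-irreducibility to $\Z[t]$-irreducibility, since a third prime can divide the content. For example, $f(t)=21+42t+14t^2=7(3+6t+2t^2)$ satisfies every hypothesis with $q_1=2$, $q_2=3$ (here $\alpha_1=\alpha_2=1$), yet is reducible in $\Z[t]$ in the strict sense. This is really a defect of the statement as transcribed (the conclusion should be read as irreducibility over $\Q$, or primitivity should be assumed), and it is immaterial for the paper: only the final sentence --- primality of non-monomial $f$ in $\Q[t,t^{-1}]$ --- is used, and your argument does establish irreducibility in $\Q[t]$, which suffices for that.
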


\begin{proposition}[{\cite[Corollary~4.4]{Bullock-Davis:2012-1}}]\label{prop:BDcriterion} Let $f(t)=a_0+a_1t+\cdots +a_dt^d\in \Z[t]$, where~$a_1$ and $a_0$ are relatively prime non-zero integers. If $f(t)$ is prime in $\Z[t]$ and $a_0 \neq \pm x^c$ for any integer $x$ and natural number $c > 1$, then $f(t)$ is strongly prime in $\Z[t]$. 

Consequently, if $f(t)$ is not a monomial, it is strongly prime in $\Q[t,t^{-1}]$. 
\end{proposition}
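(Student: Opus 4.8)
The plan is to prove the one substantive assertion --- that $f(t^k)$ is irreducible, equivalently (as $\Z[t]$ is a UFD) prime, in $\Z[t]$ for every integer $k\ge 1$, which is exactly what it means for $f$ to be strongly prime in $\Z[t]$ --- and then read off the statement about $\Q[t,t^{-1}]$. For the deduction, note that $f$, being irreducible of positive degree in $\Z[t]$ (we have $a_1\ne 0$, so $\deg f\ge 1$), is primitive; hence each $f(t^k)$ is primitive, and it is not a monomial since it contains the two distinct nonzero terms $a_0$ and $a_dt^{kd}$. So once $f(t^k)$ is known irreducible in $\Z[t]$, Remark~\ref{rem:primality} gives that it is prime in $\Q[t,t^{-1}]$, covering $k\ge 1$. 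For negative $k$, writing $m=-k$ and granting that $f(t^m)$ is irreducible (proved below), one checks that $t^{md}f(t^{-m})$ is the reversed polynomial of $f(t^m)$, hence primitive and irreducible over $\Q$ (reversal preserves irreducibility when the constant term is nonzero), so $f(t^{-m})$ --- a unit multiple of it in $\Q[t,t^{-1}]$ --- is prime there as well. Thus everything reduces to proving $f(t^k)$ irreducible in $\Z[t]$, equivalently (by primitivity and Gauss's Lemma) over $\Q$, for all $k\ge 1$.

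Fix $k\ge 1$, let $\alpha$ be a root of $f$, and set $L=\Q(\alpha)$. The first step is Capelli's theorem, which reduces irreducibility of $f(t^k)$ over $\Q$ to two conditions: that $f$ is irreducible over $\Q$ --- true, as $f$ is prime in $\Z[t]$ and primitive --- and that the binomial $x^k-\alpha$ is irreducible over $L$. The second step invokes the classical criterion for irreducibility of binomials: $x^k-\alpha$ is irreducible over $L$ unless either $\alpha=\gamma^p$ for some prime $p\mid k$ and $\gamma\in L$, or $4\mid k$ and $\alpha=-4\gamma^4$ for some $\gamma\in L$. So the real content is to rule out these two possibilities.

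The engine for that --- and the step I expect to be the main obstacle --- is a $q$-adic computation. The hypothesis that $a_0$ is not $\pm$ a proper power forces $|a_0|\ge 2$ and $\gcd\{v_q(a_0):q\text{ prime},\,q\mid a_0\}=1$, where $v_q$ denotes the $q$-adic valuation. Fix a prime $q\mid a_0$; since $\gcd(a_0,a_1)=1$ we have $v_q(a_1)=0$, so the leftmost edge of the $q$-adic Newton polygon of $f$ joins $(0,v_q(a_0))$ to $(1,0)$ and has horizontal length $1$. By the Newton polygon factorization over the complete field $\Q_q$, this produces a root $\alpha_0\in\Q_q$ of $f$ with $v_q(\alpha_0)=v_q(a_0)$; one must note this is unaffected if $q$ divides the leading coefficient $a_d$, since dividing $f$ by $a_d$ merely translates the Newton polygon vertically, changing neither slopes nor edge lengths. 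As $f$ is, up to its leading coefficient, the minimal polynomial of $\alpha$ over $\Q$, the assignment $\alpha\mapsto\alpha_0$ defines an embedding $L\hookrightarrow\Q_q$ under which the image of $\alpha$ has valuation exactly $v_q(a_0)$. Hence if $\alpha=\gamma^p$ in $L$, comparing $q$-adic valuations of the images in $\Q_q$ gives $p\mid v_q(a_0)$; letting $q$ range over all primes dividing $a_0$ forces $p\mid\gcd\{v_q(a_0)\}=1$, which is absurd since $p\ge 2$. Similarly, if $4\mid k$ and $\alpha=-4\gamma^4$, then $4\mid v_q(a_0)$ for odd $q\mid a_0$ and $v_2(a_0)\equiv 2\pmod 4$ when $2\mid a_0$, so in every case $v_q(a_0)$ is even, again contradicting $\gcd\{v_q(a_0)\}=1$. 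Therefore $x^k-\alpha$ is irreducible over $L$, $f(t^k)$ is irreducible, and the proof is complete.

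In short, the ingredients are Capelli's theorem, the classical binomial irreducibility criterion, and the Newton polygon over $\Q_q$; the sole delicate point is using $\gcd(a_0,a_1)=1$ to exhibit, for each prime $q\mid a_0$, a $\Q_q$-rational root of $f$ of valuation exactly $v_q(a_0)$, after which both failure modes of binomial irreducibility collide head-on with the hypothesis that $a_0$ is not a proper power.
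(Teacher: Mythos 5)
Your argument is correct, but note that the paper itself does not prove this proposition: it is quoted verbatim from Bullock--Davis \cite[Corollary~4.4]{Bullock-Davis:2012-1}, and the only in-paper content is the observation that the final sentence follows from Remark~\ref{rem:primality}. So what you have written is an independent proof of a black-boxed input rather than a variant of an argument in this paper. On its own terms the proof is sound: the reduction of irreducibility of $f(t^k)$ over $\Q$ to irreducibility of $x^k-\alpha$ over $\Q(\alpha)$ is exactly Capelli's theorem, the two failure modes you must exclude are exactly those of the Vahlen--Capelli binomial criterion (valid here since $\Q(\alpha)$ has characteristic $0$), and your $q$-adic step is correct: for $q\mid a_0$ the coprimality $\gcd(a_0,a_1)=1$ forces $v_q(a_1)=0$, so the lower Newton polygon has first edge from $(0,v_q(a_0))$ to $(1,0)$ (the slope to any $(j,v_q(a_j))$ with $j\ge 2$ is strictly larger), giving a linear factor over $\Q_q$ and hence an embedding $\Q(\alpha)\hookrightarrow \Q_q$ with $v_q(\alpha)=v_q(a_0)$; since $a_0\neq\pm x^c$ rules out $a_0=\pm1$ and forces $\gcd_q v_q(a_0)=1$, both the $\alpha=\gamma^p$ case and the $\alpha=-4\gamma^4$ case collapse, and the passage back to $\Z[t]$ via primitivity and Gauss, as well as the negative exponents via reversal (or the automorphism $t\mapsto t^{-1}$ of $\Q[t,t^{-1}]$), is handled correctly. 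The only caveats are presentational: you lean on three quoted classical theorems (Capelli, the binomial criterion, Newton polygon factorization over a complete field), which is fine provided they are cited, and you could spell out in one line why $(1,0)$ is actually a vertex of the polygon (the slope comparison above), but neither is a gap.
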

\noindent Note that in each of the above propositions, the last statement follows from Remark~\ref{rem:primality}.

\begin{proof}[Proof of Corollary \ref{cor:robust2}]Recall that
\[
\Delta_{R^{k,J}}(t)=\Delta_{Q^k}(t)=\delta_k(t)\delta_k(t^{-1}),
\]
where $\delta_k(t) = kt-(k+1)$ as before. We will now check that $\delta_k(t^p)$ is prime for all $p\geq 1$; this will complete the proof by Propositions~\ref{prop:robust-with-ambiguity} and~\ref{prop:robust} and Theorem~\ref{prop:cablerobust}. 

First let $k=8$. Then $\delta_k(t^p)=8t^p-9$. In Proposition~\ref{prop:Bonciocat}, let $q_1=2$ and $q_2=3$. Then for all $j=1,\dots, p-1$, $r^i_j=\infty$, $\alpha_1=r^1_p=3$ and $\alpha_2=r^2_0=2$. Observe that $\gcd(\alpha_1,p)$ is $1$ if $p\neq 3$ and is $3$ if $p=3$. Similarly, $\gcd(\alpha_2,p)=1$ if $p\neq 2$ and $2$ if $p=2$. In all cases, $\gcd(\gcd(\alpha_1,p),\gcd(\alpha_2,p))=1$ and so $\delta_k(t^p)$ is prime in $\Q[t,t^{-1}]$.  Thus we conclude that~$\delta_k(t)=8t^p-9$ is strongly prime in $\Q[t,t^{-1}]$ when $k=8$.  

Now let $k\neq 8$ and suppose for the sake of contradiction that there is some $p$ for which~$\delta_k(t^p)$ is not prime. We know from before that $\delta_k(t)$ is prime. As a result, $\delta_k(t^{-1})$ is prime, $\delta_k(t^{-p})$ is not, and $p\neq \pm 1$. In other words, $\delta_k(t)$ and $\delta_k(t^{-1})$ are both prime but not strongly prime. Then, by Proposition~\ref{prop:BDcriterion}, there exist positive integers $x, y$ and natural numbers $a,b >1$ such that $k+1 = x^a$ and $k = y^b$. In particular, $x^a - y^b = 1$. The solutions to this equation form the subject of the famous Catalan conjecture from 1844, which was proven in 2004 by Mih\u{a}ilescu in \cite{Mihailescu:2004-1} who showed that the only solution is~$x = 3, a = 2, y = 2, b = 3$. This implies that $k = 8$ which is a contradiction. In conclusion, $\delta_k(t)$ is strongly prime in $\Q[t,t^{-1}]$ for all~$k$, which completes the proof. \end{proof}

\noindent We end the section with the promised proof of Theorem~\ref{prop:cablerobust}.

\begin{proof}[Proof of Theorem~\ref{prop:cablerobust}]
We have already seen that $(R_{p,1})_{\alpha'}$ is a doubling operator and so we only need to show that it is robust. The first half of Definition \ref{def:robust} will follow from the fact that the Alexander module of $R_{p,1}$ is a ``tensored up'' version of the Alexander module of~$R$. This is a well-known fact; we give a proof since we will need the details in the second half of the proof. Our strategy is similar to \cite{Seifert:1950-1, Livingston-Melvin:1985-1}. 

Let $\Delta$ be a ribbon disk for $R$. Let $N(\Delta)$ be a tubular neighborhood of $\Delta$ restricting to~$N(R)$ a tubular neighborhood of $R$. Construct $R_{p,1}$ within $N(R)$ and construct a ribbon disk $\Delta_{p,1}$ for $R_{p,1}$ by taking $p$ parallel copies of $\Delta$ within $N(\Delta)$ and banding them together while introducing only index one critical points. Let $N(R_{p,1})$ and $N(\Delta_{p,1})$ be tubular neighborhoods of $R_{p,1}$ and $\Delta_{p,1}$ contained within $N(R)$ and $N(\Delta)$ respectively. Let $E(R)$, $E(R_{p,1})$, $E(\Delta)$, and $E(\Delta_{p,1})$ be the exteriors of the tubular neighborhoods of $R$, $R_{p,1}$, $\Delta$, and $\Delta_{p,1}$ constructed above. 

\begin{figure}[htb]
\centering
\includegraphics{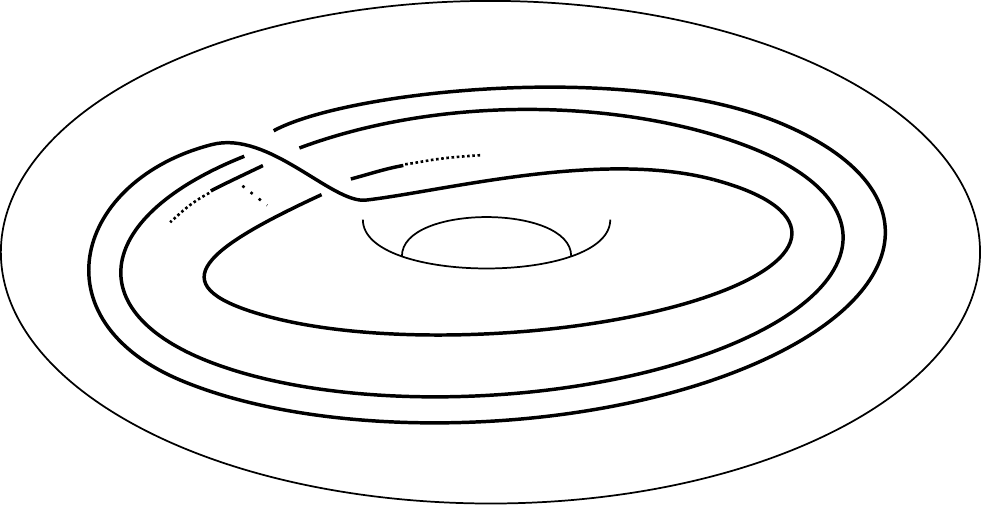}
\caption{The complement of the knot shown above is the space $X$. The pictured knot wraps $p$ times around the longitude of the solid torus and only once around the meridian.}\label{fig:cableexterior}
\end{figure}
Let $X = \overline{N(R)}\setminus N(R_{p,1})$ be the exterior of the knot in a solid torus depicted in Figure~\ref{fig:cableexterior}. Let $Y$ be the space $\overline{N(\Delta)}\setminus N(\Delta_{p,1})$. From our construction, we see that 
\[E(R_{p,1})=E(R)\cup X
\]
and 
\[E(\Delta_{p,1})=E(\Delta) \cup Y.
\]
Additionally, note that $H_1(E(R_{p,1}))\cong H_1(E(\Delta_{p,1}))\cong \Z$ is generated by a meridian of~$R_{p,1}$ while $H_1(E(R))\cong H_1(E(\Delta))\cong \Z$ is generated by a meridian of $R$. Moreover, within~$E(R_{p,1})$, the meridian of $R$ is homologous to $p$ times the meridian of $R_{p,1}$. 

Let $\pi_1(E(\Delta_{p,1}))\to \Z=\langle s\rangle$ and $\pi_1(E(\Delta))\to \Z=\langle t\rangle$ be the abelianization maps. The~$\Q[s,s^{-1}]$-modules $H_1(E(R_{p,1}); \Q[s,s^{-1}])$ and  $H_1(E(\Delta_{p,1}); \Q[s,s^{-1}])$ are by definition the rational Alexander modules $\A(R_{p,1})$ and $\A(\Delta_{p,1})$ of $R_{p,1}$ and $\Delta_{p,1}$ respectively.   Similarly, the $\Q[t,t^{-1}]$-modules $H_1(E(R); \Q[t,t^{-1}])$ and  $H_1(E(\Delta); \Q[t,t^{-1}])$ are the rational Alexander modules $\A(R)$ and $\A(\Delta)$ of $R$ and $\Delta$ respectively. Define a $\Q[t,t^{-1}]$-module structure on $\Q[s,s^{-1}]$ by letting $t$ act by multiplication by $s^p$. Then $\Q[s,s^{-1}]$ is free as a module over $\Q[t,t^{-1}]$ with basis $1,s,\dots, s^{p-1}$. As a result, $\Q[s,s^{-1}]$ is flat as a module over $\Q[t,t^{-1}]$ and we see that 
\[
H_1(E(R);\Q[s,s^{-1}]) \cong \A(R)\otimes_{\Q[t,t^{-1}]} \Q[s,s^{-1}]\]
and 
\[H_1(E(\Delta);\Q[s,s^{-1}])\cong \A(\Delta)\otimes_{\Q[t,t^{-1}]} \Q[s,s^{-1}].
\]
  
Let $T$ be the torus $E(R)\cap X$ and $S$ be the solid torus $E(\Delta)\cap Y$. Consider the following diagram of homology groups with coefficients in $\Q[s,s^{-1}]$, where the rows come from the Mayer-Vietoris exact sequence, and the vertical arrows are induced by inclusion.
  \begin{equation}\label{eqn:MVScable}
  \begin{diagram}
  \node{H_d(T)}\arrow{e,t}{i_*\oplus j_*}\arrow{s} \node{H_d(E(R))\oplus H_d(X)} \arrow{e}\arrow{s} \node{H_d(E(R_{p,1}))}\arrow{e,t}{}\arrow{s}\node{H_{d-1}(T)}\arrow{s}\\
  \node{H_d(S)}\arrow{e,t}{k_*\oplus \ell_*} \node{H_d(E(\Delta))\oplus H_d(Y)} \arrow{e}\node{H_d(E(\Delta_{p,1}))}\arrow{e,t}{}\node{H_{d-1}(S)}
  \end{diagram}
  \end{equation}
Above, the maps $j_*$ and $\ell_*$ are induced by the inclusions $T\hookrightarrow X$ and $S\hookrightarrow Y$ respectively.  These maps are independent of $R$ and $\Delta$ and thus, we may use any choice of~$R$ and~$\Delta$ to investigate them. Consider the case when $R$ is the unknot and $\Delta$ is obtained by taking a disk bounded by $R$ in $S^3$ and pushing the interior into $B^4$. In this case,~$R_{p,1}$ is also unknotted and~$\Delta_{p,1}$ is a pushed in copy of a disk in $S^3$. We see that for all~$d>0$, $H_d(E(R))\cong H_d(E(\Delta))\cong H_d(E(R_{p,1}))\cong H_d(E(\Delta_{p,1}))=0$ (recall that we are using~$\Q[s,s^{-1}]$ coefficients). Thus, the diagram above shows that $j_*$ and $\ell_*$ are isomorphisms when $d\neq 0$ and are monomorphisms for $d=0$. Returning to the general case, we see immediately that 
\[
H_d(E(R);\Q[s,s^{-1}]) \rightarrow H_d(E(R_{p,1});\Q[s,s^{-1}])
\]
and 
\[H_d(E(\Delta);\Q[s,s^{-1}]) \to H_d(E(\Delta_{p,1});\Q[s,s^{-1}])
\]
are isomorphisms for all $d > 0$. In particular, we have now established that the inclusion map $E(R)\to E(R_{p,1})$ induces an isomorphism
\[
\A(R_{p,1}) \cong \A(R)\otimes \Q[s,s^{-1}].
\]
In particular, we see that
\[
\A(R_{p,1}) \cong \dfrac{\Q[t,t^{-1}]}{\langle\delta(t)\cdot\delta(t^{-1})\rangle}\otimes \Q[s,s^{-1}] \cong\dfrac{\Q[s,s^{-1}]}{\langle\delta(s^p)\cdot\delta(s^{-p})\rangle}
\]
is generated by $\alpha'$, the image of $\alpha$ in $\A(R_{p,1})$. By hypothesis $\delta(s^p)$ is prime in~$\Q[s,s^{-1}]$ and thus, $\left(R_{p,1}\right)_{\alpha'}$ satisfies the first condition for being a robust doubling operator.

It remains to verify the last condition in the definition of a robust doubling operator. That is, we show that for every isotropic submodule $P$ of $\A(R_{p,1})$, either $P$ corresponds to a ribbon disk for~$R_{p,1}$ or~$\rho(M(R_{p,1}),\phi_P)\neq 0$.  From Remark~\ref{rem:3submodules}, we know that there are only three proper submodules of $\A(R_{p,1})$, namely $\langle 0\rangle$, $\langle \delta(s^p)\cdot [\alpha']\rangle$, and $\langle\delta(s^{-p})\cdot [\alpha']\rangle$. Observe that each of these is realized as $Q\otimes \Q[s,s^{-1}]$ for some submodule $Q$ of $\A(R)$. More precisely, 
\begin{align*}
\langle 0\rangle &= \langle 0\rangle \otimes_{\Q[t,t^{-1}]} \Q[s,s^{-1}]\\
\langle \delta(s^p)\cdot [\alpha']\rangle &= \langle \delta(t)\cdot [\alpha]\rangle \otimes_{\Q[t,t^{-1}]} \Q[s,s^{-1}]\\
\langle \delta(s^{-p})\cdot [\alpha']\rangle &= \langle \delta(t^{-1})\cdot [\alpha]\rangle \otimes_{\Q[t,t^{-1}]} \Q[s,s^{-1}]
\end{align*}
Let $Q$ be an isotropic submodule of $\A(R)$ corresponding to a ribbon disk $\Delta$ for $R$, that is,~$Q = \ker(\A(R)\to \A(\Delta))$. We claim that $Q\otimes\Q[s,s^{-1}]$ corresponds to $\Delta_{p,1}$.  Indeed, using the identification of $\A(R_{p,1})$ with $\A(R)\otimes \Q[s,s^{-1}]$ and applying~\eqref{eqn:MVScable}, we observe that 
\begin{align*}
\ker(\A(R_{p,1})\to \A(\Delta_{p,1})) 
&= \ker(\A(R)\otimes \Q[s,s^{-1}]\to \A(\Delta)\otimes\Q[s,s^{-1}])\\
&= \ker(\A(R)\to \A(\Delta))\otimes\Q[s,s^{-1}]\\
&= Q\otimes \Q[s,s^{-1}].
\end{align*}
where the second equality uses the fact that $\Q[s,s^{-1}]$ is flat. Thus,  $Q\otimes \Q[s,s^{-1}]$ corresponds to a ribbon disk for $R_{p,1}$ whenever $Q$ corresponds to a ribbon disk for $R$.  

Let $Q\otimes \Q[s,s^{-1}]$ be an isotropic (and thus, proper) submodule of $\A(R_{p,1})$. By Remark~\ref{rem:3submodules}, we see that $Q$ is an isotropic submodule of $\A(R)$.   Suppose that $Q\otimes \Q[s,s^{-1}]$ does not correspond to a ribbon disk for $R_{p,1}$. As a result, we see that $Q$ does not correspond to a ribbon disk for $R$. The proof will be complete if we can show that the corresponding first-order signature, $\rho(M(R_{p,1}), \phi_{Q\otimes \Q[s,s^{-1}]})$ is not zero. Since $R_\alpha$ is a robust doubling operator, $\rho(M(R), \phi_Q)\neq 0$. We will establish that $\rho(M(R_{p,1}), \phi_{Q\otimes \Q[s,s^{-1}]})\neq 0$ by showing that 
\begin{equation}
\label{eqn:rho and cable}\rho(M(R_{p,1}), \phi_{Q\otimes \Q[s,s^{-1}]}) = \rho(M(R), \phi_Q).
\end{equation}

Construct a cobordism from $M(R)$ to $M(R_{p,1})$ as follows. Start with $M(R)\times [0,1]$ and attach a $1$-handle to $M(R)\times \{1\}$. The top boundary component is described by the surgery diagram given in Figure~\ref{fig:kirby1}(a). Attach a $0$-framed $2$-handle to the top boundary as shown in Figure~\ref{fig:kirby1}(b). The new boundary is diffeomorphic to $M(R_{p,1})$, as demonstrated in Figure~\ref{fig:kirby2}, via a handleslide followed by a slam dunk move. Let $V$ denote the resulting cobordism with $\partial V= -M(R)\sqcup M(R_{p,1})$. From our construction, it is clear that the meridian of $R$ in $M(R)$ is homologous in $V$ to $p$ times the meridian of $R_{p,1}$ in $M(R_{p,1})$. 
\begin{figure}[htb]
\labellist
\small\hair 2pt
\pinlabel $0$ at 105 20
\pinlabel $0$ at 210 0
\pinlabel $R$ at 255 55

\pinlabel $0$ at 425 20
\pinlabel $0$ at 523 0
\pinlabel $R$ at 572 55
\pinlabel $0$ at 475 40

\pinlabel $(a)$ at 150 -30
\pinlabel $(b)$ at 475 -30
\endlabellist
\centering
\includegraphics[width=\textwidth]{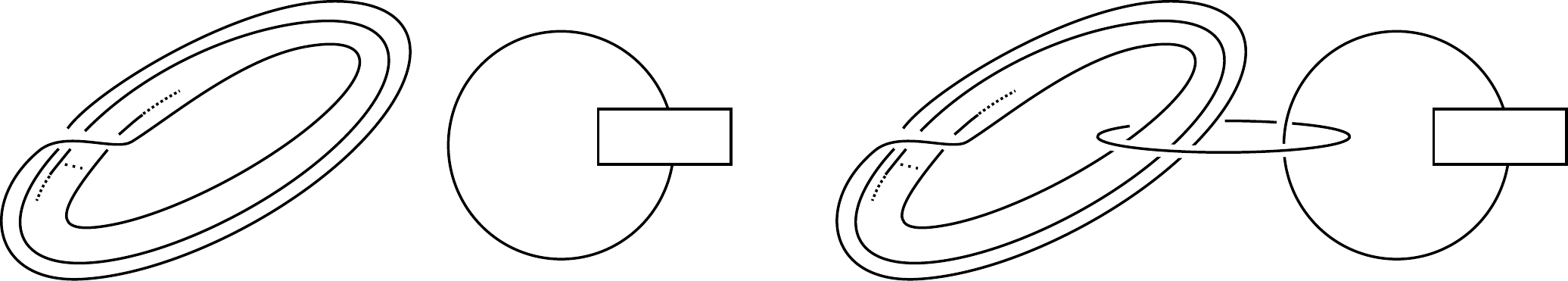}
\vspace*{5mm}
\caption{(a) The top boundary of the $4$-manifold obtained by attaching a $1$-handle to $M(R)\times [0,1]$. (b) Attach a $0$-framed $2$-handle to the manifold depicted in (a). In both figures, the leftmost knot is the same as in Figure~\ref{fig:cableexterior}. }\label{fig:kirby1}
\end{figure}

\begin{figure}[htb]
\labellist
\small\hair 2pt
\pinlabel $0$ at 105 50
\pinlabel $0$ at 210 35
\pinlabel $R$ at 255 90

\pinlabel $0$ at 377 70
\pinlabel $0$ at 370 0
\pinlabel $R$ at 415 80
\pinlabel $0$ at 400 105

\pinlabel $0$ at 520 0
\pinlabel $R$ at 565 80

\pinlabel $(a)$ at 150 -30
\pinlabel $(b)$ at 350 -30
\pinlabel $(c)$ at 500 -30
\endlabellist
\centering
\includegraphics[width=\textwidth]{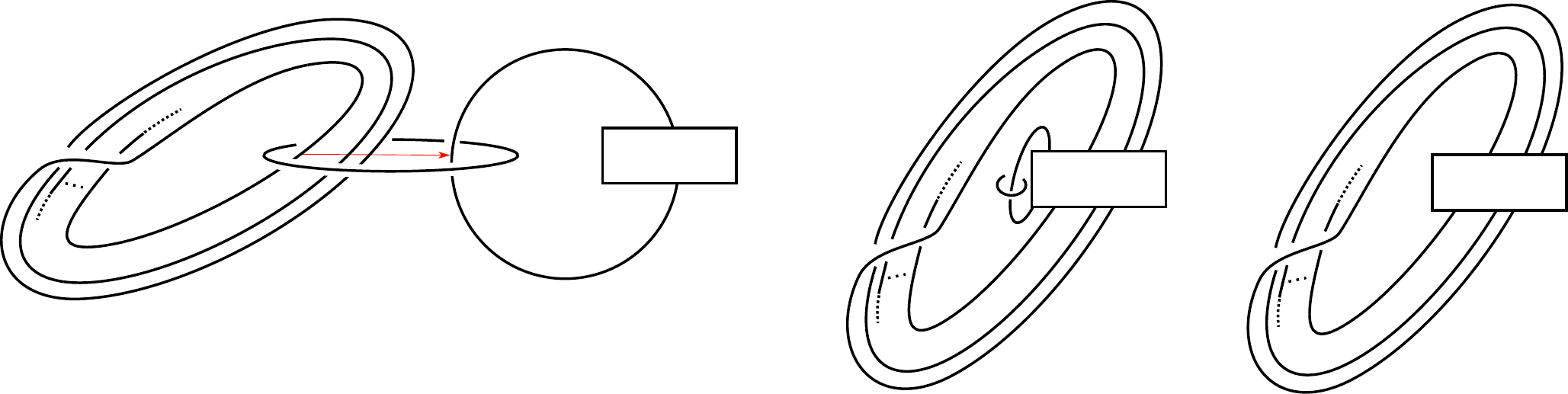}
\vspace*{5mm}
\caption{In (a), sliding the leftmost $2$-handle over the rightmost $2$-handle repeatedly, as indicated, yields the $3$-manifold shown in (b). Now perform a slam dunk move to obtain the manifold in (c). Observe that this is the manifold $M(R_{p,1})$ as desired.}\label{fig:kirby2}
\end{figure}

Note that the handles added to $M(R)$ to produce $V$ live within a neighborhood of $R$. That is, there is a copy of $E(R)\times[0,1]$ embedded in $V$ so that $E(R)\times \{0\}$ is sent to~$E(R)\subset M(R)$ and $E(R)\times\{1\}$ is mapped to the copy of $E(R)\subset E(R_{p,1})\subset M(R_{p,1})$ corresponding to $R_{p,1}$ being a satellite of $R$.   

Turning the handle structure of $V$ upside down, we see that $V$ can be built from~$M(R_{p,1})\times[0,1]$, by adding a $0$-framed $2$-handle along the curve $\ell$ of Figure~\ref{fig:kirby3}(b), performing handleslides over this new curve to produce a $0$-framed unknot, and then adding a $3$-handle to cancel this unknotted surgery curve; see Figure~\ref{fig:kirby3}. It follows immediately that $H_1(V)\cong \Z$ is generated by the meridian of $R_{p,1}$. Moreover, observe that $\ell$ bounds a surface in $M(R_{p,1})$ (given by a Seifert surface for $R$) which is disjoint from a Seifert surface for $R_{p,1}$. Thus, $\ell$ is a double commutator in $\pi_1(M(R_{p,1}))$, and as a result, it lifts to a nullhomologous curve in the infinite cyclic cover of $M(R_{p,1})$. In other words, $[\ell]=0$ in $H_1(M(R_{p,1});\Q[s,s^{-1}])$. This implies that the inclusion-induced map $H_1(M(R_{p,1});\Q[s,s^{-1}])\to H_1(V; \Q[s,s^{-1}])$ is an isomorphism.
\begin{figure}[htb]
\labellist
\small\hair 2pt
\pinlabel $0$ at 55 150
\pinlabel $R$ at 95 225
\pinlabel $(a)$ at 50 120

\pinlabel $0$ at 220 150
\pinlabel $R$ at 263 225
\pinlabel $\ell$ at 227 205
\pinlabel $0$ at 228 235
\pinlabel $(b)$ at 220 120

\pinlabel $0$ at 385 150
\pinlabel $R$ at 528 208
\pinlabel $0$ at 528 235
\pinlabel $(c)$ at 425 120

\pinlabel $0$ at 150 0
\pinlabel $0$ at 252 0
\pinlabel $R$ at 270 47
\pinlabel $(d)$ at 175 -30

\pinlabel $0$ at 445 0
\pinlabel $R$ at 457 47
\pinlabel $(e)$ at 415 -30
\endlabellist
\centering
\includegraphics[width=\textwidth]{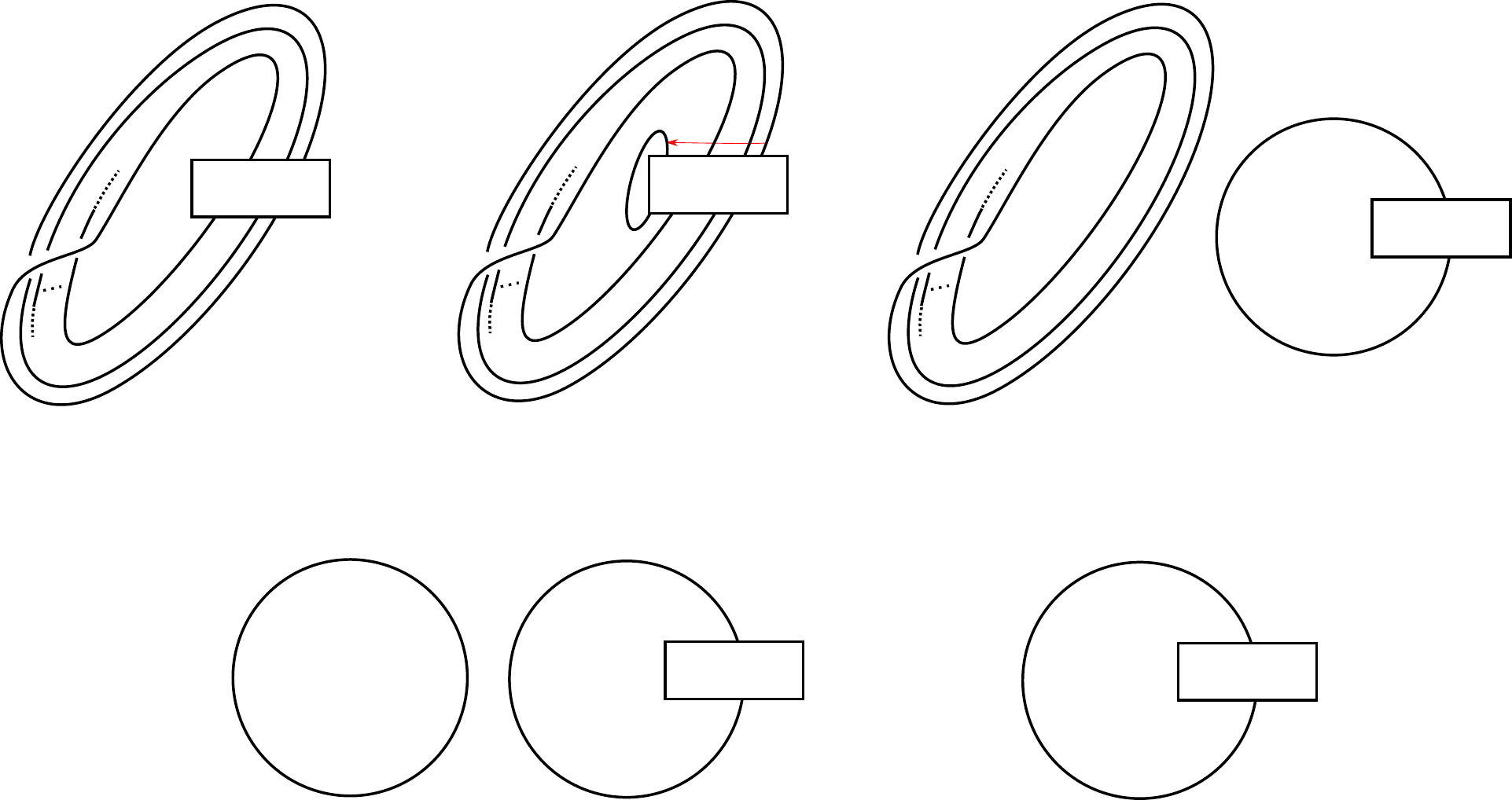}
\vspace*{5mm}
\caption{(a) The $3$-manifold $M(R_{p,1})$. (b) Attach a $0$-framed $2$-handle along the curve $\ell$ then repeatedly slide the other $2$-handle over it. (c) The result of the handle slides in (b). (d) The result of an isotopy on the diagram in (c). Attach a $3$-handle to cancel the $0$-framed unknot on the left. (e) The result of canceling the $2$-handle on the left of (d) and the attached $3$-handle.}\label{fig:kirby3}
\end{figure}

Since $\Q[s,s^{-1}]$ is flat as a $\Q[t,t^{-1}]$ module, and $H_1(E(R);\Q[t,t^{-1}])\rightarrow H_1(M(R);\Q[t,t^{-1}])$ is an isomorphism, the map $H_1(E(R);\Q[s,s^{-1}])\rightarrow H_1(M(R);\Q[s,s^{-1}])$ is also an isomorphism. Moreover, recall from earlier that $H_1(E(R);\Q[s,s^{-1}]) \rightarrow H_1(E(R_{p,1});\Q[s,s^{-1}])$ is an isomorphism. Combining our observations from the last few paragraphs, we obtain a sequence of inclusion-induced isomorphisms with $\Q[s,s^{-1}]$-coefficients:
\[
H_1(M(R))\cong H_1(E(R)) \cong H_1(E(R_{p,1}))\cong H_1(M(R_{p,1})) \cong H_1(V).
\]
Moreover, let $Q\subseteq \A(R)$ be any submodule and let (via an abuse of notation) $Q'$ denote the image of $Q\otimes \Q[s,s^{-1}]$ in the isomorphic modules $\A(V) := H_1(V;\Q[s,s^{-1}])$ and $\A(R_{p,1}):=H_1(E(R_{p,1};\Q[s,s^{-1}]))$. 
Define $\overline{\phi_{Q'}}\colon \pi_1(V) \to \Z\ltimes \frac{\A(V)}{Q'}$ analogously to the definition prior to Remark~\ref{rem:alex-module-defn}, that is,
\[
\overline{\phi_{Q'}}\colon \pi_1(V)\to \pi_1(V)/\pi_1(V)^{(2)}\cong \Z\ltimes \pi_1(V)^{(1)}/\pi_1(V)^{(2)}\to \Z\ltimes \A(V)\to  \Z\ltimes \A(V)/Q'.
\]
Here we are using the fact that $H_1(V)\cong \Z$. 
Then the following diagram commutes.
$$\begin{diagram}
\node{\pi_1(M(R))}\arrow{e,t}{\phi_Q}\arrow{s}\node{\langle t\rangle \ltimes \frac{\A(R)}{Q}}\arrow{s,J}
\\
\node{\pi_1(V)}\arrow{e,t}{\overline{\phi_{Q'}}}\node{\langle s\rangle \ltimes \frac{\A(V)}{Q'}}
\\
\node{\pi_1(M(R_{p,1}))}\arrow{n}\arrow{e,t}{\phi_{Q'}}\node{\langle s\rangle \ltimes \frac{\A(R_{p,1})}{Q'}}\arrow{n,l}{\cong}
\end{diagram}$$
To see that the map on the top right is injective, consider the following commutative diagram, whose rows are exact.
\[
\begin{diagram}
\node{0} \arrow{e}\node{\A(R)/Q} \arrow{e}\arrow{s,t}{x\mapsto x\otimes 1} \node{\langle t\rangle \ltimes \A(R)/Q} \arrow{e}\arrow{s} \node{\langle t \rangle} \arrow{e}\arrow{s,t}{t\mapsto  s^p} \node{0}\\
\node{0} \arrow{e}\node{\A(V)/Q'} \arrow{e} \node{\langle s\rangle \ltimes \A(V)/Q'} \arrow{e} \node{\langle s \rangle} \arrow{e} \node{0}
\end{diagram}
\]
Since the adjoining maps are injective, the middle map is injective by the five lemma. 
By the properties of von Neumann $\rho$-invariants listed in Section~\ref{sec:L2-signature-invariants}, we see that 
\[
-\rho(M(R), \phi_Q) + \rho(M(R_{p,1}), \phi_{Q'}) = \sigma^{(2)}(V, \phi_{Q'}) - \sigma(V)
\]
Recall that $V$ is built from $M(R)$ by adding a $1$-handle and then a $2$-handle along a homologically nontrivial curve. Thus, $H_2(M(R))\to H_2(V)$ is an epimorphism and so~$\sigma(V) = 0$. 

We have now reduced the proof to showing that $\sigma^{(2)}(V,\phi_{Q'})=0$. As a preliminary, we first show that $\Gamma$ is \emph{PTFA (poly-torsion-free-abelian)}. Recall that a group $G$ is said to be PTFA if it admits a descending series of normal subgroups 
\[
0=G_n\lhd G_{n-1}\lhd \dots \lhd G_0=G
\]
such that $G_{k}/G_{k+1}$ is torsion-free and abelian~\cite[Definition 2.5]{Cochran:2002-1}. In our case,  $\Gamma = \langle s\rangle \ltimes \frac{\A(V)}{Q'}$, and we have the series $0\lhd G_1 \lhd \Gamma$ where $G_1=\frac{\A(V)}{Q'}\le \Gamma$. Indeed, $G_1 = \frac{\A(V)}{Q'}$ is a vector space over $\Q$ and so is torsion-free and abelian, while $\Gamma/G_1 \cong \langle s\rangle\cong \Z$.

There is a bound on the $L^2$-signature of a $4$-manifold in terms of the rank of a homology group with twisted coefficients. 
We briefly summarize where this bound comes from (see also \cite[Section 2.3]{Davis:2012-1}). For any $4$-manifold $X$ and group $\Gamma$, the $L^2$-signature is defined in terms of the $L^2$-homology $H_2^{(2)}(X;\ell^2(\Gamma))$. Indeed, by \cite[Corollary 1.10]{Lueck-Schick:2003-1}, $\sigma^{(2)}(X,\psi)$ is the difference in $L^2$-dimensions of the maximal subspace on which the intersection form is positive definite and the maximal subspace where the intersection form is negative definite. (For a summary of $L^2$-homology and $L^2$-dimension, see \cite{Lueck:2002-1}.)  The monotonicity of the $L^2$-dimension \cite[Theorem 1.12 (2)]{Lueck:2002-1} immediately implies that
$$
\lvert \sigma^{(2)}(X,\psi)\rvert \leq \dim^{(2)}\left({H^{(2)}_2(X;\ell^2(\Gamma))} \right).
$$
According to \cite[Proposition 3.2]{Cochran:2002-1}, since $\Gamma$ is PTFA, $\Q[\Gamma]$ is an Ore domain and so embeds into its skew field of fractions $\K(\Gamma)$. In this case, by~\cite[Lemma~2.4]{Cha:L2} the $L^2$-dimension agrees with rank over $\K(\Gamma)$, and thus, 
\begin{equation}\label{eq:L2-bound}
\lvert \sigma^{(2)}(X,\psi)\rvert \leq \rk_{\K(\Gamma)}\left({H_2(X;\K(\Gamma))} \right)
\end{equation}

Finally, we complete the proof by computing $H_2(X;\K(\Gamma))$. Note that $\overline \phi_{Q'}$ is nontrivial as it sends a meridian of $R_{p,1}$ to $s\in \langle s\rangle \ltimes \frac{\A(V)}{Q'}$. By~\cite[Proposition~3.7]{Cochran:2002-1}, since $V$ is a connected CW complex, it follows that $H_0(V;\K(\Gamma))=0$. Then, by~\cite[Proposition~3.10 and Remark~3.6(1)]{Cochran:2002-1}, since $\pi_1(V)$ is finitely generated, $\phi_{Q'}$ is nontrivial, and $\beta_1(V)=1$, it follows that $H_1(V;\K(\Gamma))=0$. Notice that $\overline\phi_{Q'}$ sends the meridian of $R_{p,1}$ to $s$ and the meridian of $R$ to $s^p$, both of which are nontrivial in $\Gamma$. Thus, $\overline\phi_{Q'}$ is nontrivial even when restricted to any boundary component of $V$.  
The same argument as we used on $V$ implies that $H_1(M(R);\K(\Gamma))=H_1(M(R_{p,1});\K(\Gamma))=0$. Since $V$ is a $4$-manifold with boundary,~$H_4(V; \K(\Gamma)) = 0$. Recall from our construction of $V$ that $(V,M(R))$ has no relative $3$-handles. Thus, $H_3(M(R); \K(\Gamma)) \to H_3(V; \K(\Gamma))$ is onto. However, 
\[
H_3(M(R);\K(\Gamma))\cong H^1(M(R);\K(\Gamma))\cong \Hom(H_1(M(R);\K(\Gamma)),\K(\Gamma)) = \Hom(0,\K(\Gamma))=0.\]
Thus, $H_3(V;\K(\Gamma))=0$. Finally, since $\chi(V) = 0$, \cite[Page 357, Fact 3]{Cochran:2002-1} implies that~$H_2(V; \K(\Gamma))=0$.  By the bound~\eqref{eq:L2-bound}, we finally conclude that $\sigma^{(2)}(V, \ \phi_{Q'})=0$, which completes the proof. \end{proof}

\section{Proof of Theorem~\ref{thm:cables-indep}}\label{sec:proofofA}

Cochran-Harvey-Leidy give a very general construction of knots in $\F_n$ which are linearly independent modulo $\F_{n.5}$ \cite[Theorem~7.7]{Cochran-Harvey-Leidy:2011-1}. Moreover, as we mention in Remark~\ref{rem:rho-obstructions}, in \cite{Cochran-Harvey-Horn:2013-1} Cochran-Harvey-Horn demonstrate that the tools used in \cite{Cochran-Harvey-Leidy:2011-1} also obstruct a knot being in $\B_{n+1}$, so that the following is a consequence.  

\begin{theorem}~\label{cor:lin-ind} Let $\mathcal{R}^i = \left(R^{i,1}_{\alpha_{i,1}},\ldots,R^{i,n}_{\alpha_{i,n}}\right)$, for $i \geq 1$, be sequences of robust doubling operators and $\mathcal{Q}^i=\left(\Delta_{R^{i,n}}(t), \ldots,\Delta_{R^{i,1}}(t)\right)$ be the sequences of Alexander polynomials of~$\mathcal{R}^i$.
Let $\{K^{m,i}\}_{m,i \geq 1}$ be an infinite set of knots.
Suppose that 
\begin{enumerate}
\item \label{item:coprime}$\mathcal{Q}^i$ is strongly coprime to $\mathcal{Q}^{i'}$ for all $i\neq i'$, and
\item \label{item:rho} for each fixed $i$, no nontrivial  rational linear combination of elements of the set $\{\rho_0(K^{m,i})\}_{m\geq 1}$ produces an element in the rational span of $\mathcal{FOS}(R^{i,1})$.
\end{enumerate}
Then $\{R^{i,n}_{\alpha_{i,n}}\circ \cdots \circ R^{i,1}_{\alpha_{i,1}}(K^{m,i})\}_{m,i\geq 1}$ is linearly independent in $\C/(\mathcal{F}_{n.5}+\B_{n+1})$.
\end{theorem}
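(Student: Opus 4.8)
The plan is to obtain this as a consequence of the main technical theorem of Cochran--Harvey--Leidy together with the observation recorded in Remark~\ref{rem:rho-obstructions}. Write $J_{m,i} := R^{i,n}_{\alpha_{i,n}}\circ\cdots\circ R^{i,1}_{\alpha_{i,1}}(K^{m,i})$, and suppose toward a contradiction that $\sum_{m,i} a_{m,i}[J_{m,i}] = 0$ in $\C/(\F_{n.5}+\B_{n+1})$ for some integers $a_{m,i}$, not all zero; that is, the connected sum $J := \#_{m,i}\, a_{m,i} J_{m,i}$ lies in $\F_{n.5}+\B_{n+1}$. By Remark~\ref{rem:rho-obstructions} we have $\B_{n+1}\subseteq \F^{\Q}_{n+1}\subseteq \F^{\Q}_{n.5}$, while $\F^{\Q}_{n.5}$ is closed under connected sum and contains $\F_{n.5}$; hence $J\in\F^{\Q}_{n.5}$. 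It therefore suffices to prove $J\notin\F^{\Q}_{n.5}$. This is exactly the conclusion of \cite[Theorem~7.7]{Cochran-Harvey-Leidy:2011-1} with $\F_{n.5}$ replaced by $\F^{\Q}_{n.5}$ — a replacement justified by the last sentence of Remark~\ref{rem:rho-obstructions}, since the obstruction produced in the proof of that theorem is a von Neumann $\rho$-invariant obstruction, and every such obstruction to membership in $\F_{n.5}$ is equally an obstruction to membership in $\F^{\Q}_{n.5}$.

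For orientation I recall the shape of that $\rho$-invariant argument. If $J$ bounded an $\F^{\Q}_{n.5}$-solution $W$, then working up the infection tower one level at a time, and using the Blanchfield form of each $R^{i,\ell}$ to pin down the isotropic submodule determined by $W$ at that stage, one constructs a coefficient system $\phi\colon \pi_1(M(J))\to\Gamma$ into a PTFA group $\Gamma$ of derived length at most $n+1$ that extends over $W$. The $L^2$-signature vanishing theorem for half-integer solutions uses only rational homology and so applies to $\F^{\Q}_{n.5}$-solutions; together with property~(3) listed in Section~\ref{sec:L2-signature-invariants} it gives $\rho(M(J),\phi) = \sigma^{(2)}(W,\phi)-\sigma(W)=0$.

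On the other hand, over a standard connected-sum cobordism and using additivity of $\rho$ (property~(1)), $\rho(M(J),\phi) = \sum_{m,i} a_{m,i}\,\rho(M(J_{m,i}),\phi_{m,i})$, and $n$ applications of Proposition~\ref{prop:rho-infection}, one per infection level, rewrite each summand as a sum of first-order signatures $\rho(M(R^{i,\ell}),\phi_P)$ plus a term $\rho_0(K^{m,i})$ that is present precisely when the infection curve $[\alpha_{i,\ell}]$ is not in the submodule $P$ picked out above. Robustness of each $R^{i,\ell}_{\alpha_{i,\ell}}$ (Definition~\ref{def:robust}) guarantees that each first-order signature occurring is either nonzero or corresponds to a ribbon disk, and hence vanishes. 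The strong-coprimality hypothesis~\eqref{item:coprime} is what keeps the higher-order Alexander modules nonzero as one ascends distinct towers, forcing $\phi$ to genuinely detect each outermost infection curve and ruling out cancellation between different $i$ (or different levels); hypothesis~\eqref{item:rho} then says that the surviving $\rho_0(K^{m,i})$ contributions, taken together with the rational span of $\mathcal{FOS}(R^{i,1})$, cannot sum to zero for any nontrivial integer combination. Hence $\rho(M(J),\phi)\neq 0$, contradicting the previous paragraph, so no such $W$ exists.

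The reduction in the first paragraph is formal; I expect the genuine obstacle to be entirely internal to \cite[Theorem~7.7]{Cochran-Harvey-Leidy:2011-1}: engineering $\phi$ so that it simultaneously extends over $W$ and remains large enough to detect every infection curve — which is where the localization and non-vanishing arguments driven by strong coprimality live — and keeping the bookkeeping of the robustness dichotomy (nonzero first-order signature versus ribbon disk) compatible with the equation $\rho(M(J),\phi)=0$. Since all of this is already in place in the literature, in practice I would present only the reduction above and cite \cite[Theorem~7.7]{Cochran-Harvey-Leidy:2011-1} along with \cite[Corollary~5.6 and Section~7]{Cochran-Harvey-Horn:2013-1}.
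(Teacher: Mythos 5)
Your proposal is correct and takes essentially the same route as the paper: the paper likewise obtains Theorem~\ref{cor:lin-ind} as a direct consequence of \cite[Theorem~7.7]{Cochran-Harvey-Leidy:2011-1} together with the observation of Remark~\ref{rem:rho-obstructions} that the von Neumann $\rho$-invariant obstructions to membership in $\mathcal{F}_{n.5}$ equally obstruct membership in $\mathcal{F}_{n.5}^{\mathbb{Q}}$ and hence in $\mathcal{B}_{n+1}$, which is exactly your reduction $\mathcal{F}_{n.5}+\mathcal{B}_{n+1}\subseteq \mathcal{F}_{n.5}^{\mathbb{Q}}$. Your reading of hypothesis~(2) in the sketch is also the intended one; note only that, as the paper points out in the remark following the theorem, this hypothesis is a deliberate strengthening of the one literally stated in \cite[Theorem~7.7]{Cochran-Harvey-Leidy:2011-1}, so the citation should be understood as invoking that theorem's proof with the corrected hypothesis rather than its statement verbatim.
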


\begin{remark}
The reader will notice that item (\ref{item:rho}) above differs slightly from the wording in \cite{Cochran-Harvey-Leidy:2011-1}.  In \cite[Theorem~7.7]{Cochran-Harvey-Leidy:2011-1} the spans of the two sets are merely assumed to have trivial intersection, but this may produce examples for which the theorem fails.  For example, if every $K^{m,i}$ is the unknot, then each $R^{i,n}_{\alpha_{i,n}}\circ \cdots \circ R^{i,1}_{\alpha_{i,1}}(K^{m,i})$ will be slice, while the span of $\{\rho_0(K^{m,i})\}_{m\geq 1} = \{0\}$ is the trivial subspace of $\mathbb R$, which has trivial intersection with every subspace.  
\end{remark}

\noindent Combining the above with Theorem~\ref{prop:cablerobust}, we obtain the following corollary. The reader will see that we have lost generality by taking $\mathcal{R}^i = \mathcal{R}$ to be the same for all $i$.  The only reason to do so is to avoid an excess of notation.  

\begin{corollary} Let $\mathcal{R}= \left(R^{1}_{\alpha_{1}},\ldots,R^{n}_{\alpha_{n}}\right)$ be a sequence of robust doubling operators and let~$\Delta_{R^{n}}(t)$ be the Alexander polynomial of $R^n$.
  Let $\{K^{m}\}_{m \geq 1}$ be an infinite family of knots. 
Suppose that
\begin{enumerate}
\item for all $p\in \N$, $\Delta_{R^n}(t^p) = \delta(t^p)\cdot \delta(t^{-p})$ where $\delta(t^p)$ is prime in $\Q[t,t^{-1}]$, and
\item no nontrivial  rational linear combination of elements of $\{\rho_0(K^{m})\}_{m\geq 1}$ produces an element in the rational span of $\mathcal{FOS}(R^{1})$.
\end{enumerate}
Then $\{J^m_{p,1}\}_{m,p\geq 1}$ is linearly independent in $\C/(\mathcal{F}_{n.5}+\B_{n+1})$, where $J^m$ denotes the knot~$R^n_{\alpha_n}\circ \cdots \circ R^1_{\alpha_1}(K^m)$.
\end{corollary}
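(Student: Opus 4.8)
The plan is to deduce the corollary from Theorem~\ref{cor:lin-ind} by letting the cabling parameter $p$ play the role of the sequence index $i$ appearing there. The key preliminary observation is that, for any knot $X$, the $(p,1)$ cable of $R^n_{\alpha_n}(X)$ is isotopic to $(R^n_{p,1})_{\alpha_n'}(X)$, as recorded just before Theorem~\ref{prop:cablerobust}; since $R^n_{\alpha_n}$ is the outermost operator defining $J^m$, this yields
\[
J^m_{p,1} = (R^n_{p,1})_{\alpha_n'}\circ R^{n-1}_{\alpha_{n-1}}\circ\cdots\circ R^1_{\alpha_1}(K^m).
\]
Accordingly, for each $p\ge 1$ I would set $\mathcal{R}^p := \bigl(R^1_{\alpha_1},\dots,R^{n-1}_{\alpha_{n-1}},(R^n_{p,1})_{\alpha_n'}\bigr)$ and take the knot family $K^{m,p}:=K^m$. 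By hypothesis~(1) the polynomial $\delta(t^p)$ is prime in $\Q[t,t^{-1}]$, so Theorem~\ref{prop:cablerobust} shows that $(R^n_{p,1})_{\alpha_n'}$ is robust, while the remaining entries of $\mathcal{R}^p$ are robust by hypothesis; hence each $\mathcal{R}^p$ is a sequence of robust doubling operators whose associated Alexander polynomial sequence is $\mathcal{Q}^p=\bigl(\delta(t^p)\delta(t^{-p}),\,\Delta_{R^{n-1}}(t),\dots,\Delta_{R^1}(t)\bigr)$, where I have used the identity $\Delta_{R^n_{p,1}}(t)=\delta(t^p)\delta(t^{-p})$ established inside the proof of Theorem~\ref{prop:cablerobust}. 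It then remains to verify the two hypotheses of Theorem~\ref{cor:lin-ind} for the family $\{\mathcal{R}^p\}_{p\ge 1}$ and the knots $\{K^{m,p}\}$.

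The $\rho_0$-hypothesis is automatic: the innermost operator of $\mathcal{R}^p$ is $R^1_{\alpha_1}$, independent of $p$, so the requirement that no nontrivial rational linear combination of $\{\rho_0(K^{m,p})\}_m=\{\rho_0(K^m)\}_m$ lie in the rational span of $\mathcal{FOS}(R^1)$ is literally hypothesis~(2). The substance is the strong coprimality hypothesis, that $\mathcal{Q}^p$ be strongly coprime to $\mathcal{Q}^{p'}$ whenever $p\ne p'$. For this I would first note that the entries in positions $2,\dots,n$ of $\mathcal{Q}^p$ and of $\mathcal{Q}^{p'}$ coincide and are nonzero non-units, and that no nonzero non-unit of $\Q[t,t^{-1}]$ is strongly coprime to itself; hence the alternative ``$\widetilde{(p_i,q_i)}=1$ for some $i>1$'' in the definition of strong coprimality can never hold here, and one is forced to prove that the first entries are coprime, i.e.\ $\bigl(\delta(t^p)\delta(t^{-p}),\ \delta(t^{p'})\delta(t^{-p'})\bigr)=1$. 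I expect this coprimality statement to be the crux of the argument, and the one place where hypothesis~(1) together with the structure of robust operators genuinely enters.

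To establish it I would argue by Laurent degree. Robustness of $R^n_{\alpha_n}$ forces $\delta(t)$ to be prime, hence a non-unit, hence of Laurent degree $d\ge 1$; consequently each of $\delta(t^q)$ and $\delta(t^{-q})$ has Laurent degree $qd$. By hypothesis~(1), and since the automorphism $t\mapsto t^{-1}$ of $\Q[t,t^{-1}]$ preserves primality, all four polynomials $\delta(t^{p}),\delta(t^{-p}),\delta(t^{p'}),\delta(t^{-p'})$ are prime. Since $d\ge 1$ and $p\ne p'$, the two factors arising from $p$ have Laurent degree $pd$ whereas the two arising from $p'$ have Laurent degree $p'd\ne pd$; thus no factor from $p$ is an associate of a factor from $p'$, and as associateness is the only obstruction to coprimality of two primes in the Euclidean domain $\Q[t,t^{-1}]$, the products $\delta(t^p)\delta(t^{-p})$ and $\delta(t^{p'})\delta(t^{-p'})$ are coprime. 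This verifies the strong coprimality hypothesis.

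With both hypotheses verified, Theorem~\ref{cor:lin-ind} yields that the set $\{(R^n_{p,1})_{\alpha_n'}\circ R^{n-1}_{\alpha_{n-1}}\circ\cdots\circ R^1_{\alpha_1}(K^m)\}_{m,p\ge 1}$ is linearly independent in $\C/(\Fnpointfive+\B_{n+1})$, and by the opening observation this set equals $\{J^m_{p,1}\}_{m,p\ge 1}$, which would complete the proof.
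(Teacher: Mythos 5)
Your overall route is the same as the paper's: form $\mathcal{R}^p=\bigl(R^1_{\alpha_1},\dots,R^{n-1}_{\alpha_{n-1}},(R^n_{p,1})_{\alpha_n'}\bigr)$, invoke Theorem~\ref{prop:cablerobust} for robustness of the cabled operator, and feed the family into Theorem~\ref{cor:lin-ind} with $p$ playing the role of the index $i$. Your verification of strong coprimality is correct and actually more careful than the paper's one-line assertion: the Laurent-degree count showing that no factor $\delta(t^{\pm p})$ can be an associate of a factor $\delta(t^{\pm p'})$ is exactly the right justification, and your remark that the repeated later entries force one into the first-entry alternative of the definition is accurate (though not logically needed, since the definition only asks for one of the alternatives to hold).

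There is, however, a gap in your verification of hypothesis~(2) of Theorem~\ref{cor:lin-ind}. You assert that ``the innermost operator of $\mathcal{R}^p$ is $R^1_{\alpha_1}$, independent of $p$,'' but this is only true when $n\ge 2$. When $n=1$ the sequence $\mathcal{R}^p$ consists of the single operator $(R^1_{p,1})_{\alpha_1'}$, which does depend on $p$, and Theorem~\ref{cor:lin-ind} then demands that no nontrivial rational linear combination of $\{\rho_0(K^m)\}_{m\ge1}$ lie in the rational span of $\mathcal{FOS}(R^1_{p,1})$ --- whereas hypothesis~(2) of the corollary only controls the span of $\mathcal{FOS}(R^1)$. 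The paper closes this case by noting that $\mathcal{FOS}(R_{p,1})=\mathcal{FOS}(R)$ for any ribbon knot $R$ and $p\ge1$: the isotropic submodules of $\mathcal{A}(R_{p,1})$ are precisely the modules $Q\otimes\Q[s,s^{-1}]$ for $Q$ isotropic in $\mathcal{A}(R)$, and equation~\eqref{eqn:rho and cable} from the proof of Theorem~\ref{prop:cablerobust} gives $\rho(M(R_{p,1}),\phi_{Q\otimes\Q[s,s^{-1}]})=\rho(M(R),\phi_Q)$. You should either add this observation or explicitly restrict your argument to $n\ge2$; as written, the $n=1$ case of the corollary (which is used later, e.g.\ in the proof of Theorem~\ref{thm:cables-indep} with $n=1$) is not covered.
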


\begin{proof} 
Let $\mathcal{R}^p = \left(R^{1}_{\alpha_{1}},\ldots,R^{n-1}_{\alpha_{n-1}},(R^{n}_{p,1})_{\alpha_n'}\right)$ for each $p\geq 1$, where as before, $(R^{n}_{p,1})_{\alpha_n'}$ denotes the doubling operator obtained as the $(p,1)$ cable of $R^{n}$.  By Theorem~\ref{prop:cablerobust}, $(R^{n}_{p,1})_{\alpha_n'}$ is robust. Let $\mathcal{Q}^p= \left(\Delta_{R^n}(t^p),\Delta_{R^{n-1}}(t), \ldots,\Delta_{R^1}(t)\right)$ be the sequence of Alexander polynomials of $\mathcal{R}^p$. Consider any $p\neq p'\in \N$.  Since $\delta(t^p)\neq \delta(t^{p'})$ are both prime, it follows that~$\Delta_{R^n}(t^p)$ and $\Delta_{R^n}(t^{p'})$ are coprime and so $\mathcal{Q}^p$ is strongly coprime to $\mathcal{Q}^{p'}$. Thus, assumption~\eqref{item:coprime} of Theorem~\ref{cor:lin-ind} is satisfied. 

Assumption~\eqref{item:rho} of Theorem~\ref{cor:lin-ind} follows immediately from the hypothesis when $n>1$.  When $n=1$, it follows from from the fact that $\mathcal{FOS}(R_{p,1}) = \mathcal{FOS}(R)$, for any $p\geq 1$ and any ribbon knot $R$, by equation~\eqref{eqn:rho and cable} from the proof of Theorem~\ref{prop:cablerobust}. The claimed result then follows from the observation that $J^m_{p,1}$ is isotopic to  $(R^{n}_{p,1})_{\alpha_{n}'}\circ R^{n-1}_{\alpha_{n-1}}\circ \cdots \circ R^{1}_{\alpha_{1}}(K^m)$.
\end{proof}

\noindent In the simplest case, where the sequence $\mathcal{R}$ is constant, we get the following corollary. 

\begin{corollary}~\label{cor:subgroupgen}
Let $R_\alpha$ be a robust doubling operator and $\Delta_{R}(t)$ be the Alexander polynomial of $R$.  
Let $\{K^{m}\}_{m \geq 1}$ be an infinite family of knots. 
Suppose that 
\begin{enumerate}
\item for all $p\in \N$, $\Delta_{R}(t^p) = \delta(t^p)\delta(t^{-p})$ where $\delta(t^p)$ is prime in $\Q[t,t^{-1}]$, and
\item no nontrivial  rational linear combination of elements of $\{\rho_0(K^{m})\}_{m\geq 1}$ produces an element in the rational span of $\mathcal{FOS}(R)$.
\end{enumerate}
Then $\{J^m_{p,1}\}_{m,p\geq 1}$ is linearly independent in $\C/(\mathcal{F}_{n.5}+\B_{n+1})$. where $J^m$ denotes the knot $\left(R_\alpha\right)^n(K^m)$.
\end{corollary}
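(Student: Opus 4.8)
The plan is to deduce Corollary~\ref{cor:subgroupgen} from the immediately preceding corollary by specializing the sequence $\mathcal{R}$ to be constant. First I would set $R^j_{\alpha_j} := R_\alpha$ for each $j=1,\dots,n$, so that $\mathcal{R} = \left(R_\alpha,\dots,R_\alpha\right)$ is a sequence of $n$ doubling operators; since $R_\alpha$ is robust by hypothesis, every entry of $\mathcal{R}$ is robust, as required. With this choice, $R^n = R^1 = R$, so hypothesis (1) here---that $\Delta_R(t^p) = \delta(t^p)\delta(t^{-p})$ with $\delta(t^p)$ prime in $\Q[t,t^{-1}]$ for every $p\in\N$---is verbatim hypothesis (1) of the preceding corollary applied to $R^n$, and hypothesis (2) here---that no nontrivial rational linear combination of $\{\rho_0(K^m)\}_{m\geq1}$ lies in the rational span of $\mathcal{FOS}(R)$---is verbatim hypothesis (2) of the preceding corollary applied to $R^1$.

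Next I would match up the knots: in the preceding corollary one sets $J^m = R^n_{\alpha_n}\circ\cdots\circ R^1_{\alpha_1}(K^m)$, which under the constant choice of $\mathcal{R}$ is precisely $\left(R_\alpha\right)^n(K^m)$, exactly the knot named in the statement here. Invoking the preceding corollary then immediately yields that $\{J^m_{p,1}\}_{m,p\geq1}$ is linearly independent in $\C/(\mathcal{F}_{n.5}+\B_{n+1})$, which is the desired conclusion.

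I do not expect any genuine obstacle: the content of Corollary~\ref{cor:subgroupgen} is entirely subsumed by the preceding corollary, which in turn rests on Theorem~\ref{cor:lin-ind} and, crucially, on Theorem~\ref{prop:cablerobust} (ensuring that the cabled operators $(R^n_{p,1})_{\alpha_n'}$ remain robust). The only thing to verify is that the hypotheses translate word for word under the constant-sequence specialization, and they do. In particular, the strong coprimality of the sequences $\mathcal{Q}^p = \left(\Delta_R(t^p),\Delta_R(t),\dots,\Delta_R(t)\right)$ for distinct $p$, needed to apply Theorem~\ref{cor:lin-ind}, is already established inside the proof of the preceding corollary, using that $\delta(t^p)$ and $\delta(t^{p'})$ are distinct primes when $p\neq p'$.
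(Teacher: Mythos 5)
Your proposal is correct and is exactly how the paper obtains this corollary: the paper states it without further proof as the special case of the preceding corollary in which the sequence $\mathcal{R}$ is constant, and your verification that the hypotheses and the definition of $J^m$ match verbatim under that specialization is all that is needed.
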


Consider the twist knots $T_j$ with a positive clasp and $j$ full left-handed twists, as shown in Figure~\ref{fig:twist_knot}. Such knots have vanishing $\Arf$ invariant when the number of twists is even. In~\cite[Proposition~2.6, Section~5]{Cochran-Orr-Teichner:2004-1}, Cochran-Orr-Teichner showed that there exists an infinite collection of twist knots $\K = \{T_{2j_m}\}_{m\geq 1}$ with vanishing Arf invariant for which~$\{\rho_0(T_{2j_m})\}$ is linearly independent over $\Q$. 
\begin{figure}[h]
  \includegraphics[width=0.4\textwidth]{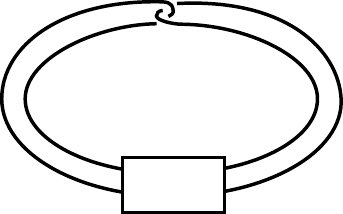}
  \put(-89,10){\LARGE $j$}
\caption{The twist knot $T_j$. The solid box containing $j$ denotes $j$ full \emph{left-handed} twists. Note that these result in positive crossings when $j$ is positive.}\label{fig:twist_knot}
\end{figure}

\begin{figure}[t]
  \includegraphics[width=0.9\textwidth]{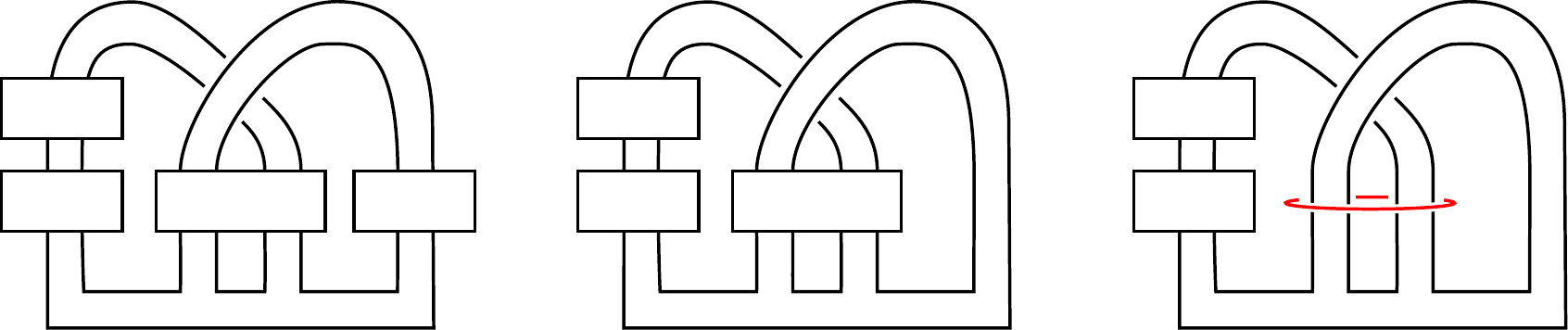}
  \put(-383,52){$T_{2j_m}$}
    \put(-380,28.3){$-k$}
        \put(-335,28.3){$J_k$}
                \put(-297,28.3){$k+1$}
    \put(-240,52){$T_{2j_m}$}
    \put(-237,28.3){$-k$}
        \put(-192,28.3){$J_k$}
    \put(-355,-16){$R^{k,J_k}_{\alpha_k}(T_{2j_m})$}
    \put(-190,-16){$R'$}
        \put(-102,52){$T_{2j_m}$}
    \put(-99,28.3){$-k$}
        \put(-27,30){$\eta$}
    \put(-55,-16){$U_\eta$}
\caption{Left: The result of infection $R^{k,J_k}_{\alpha_k}(T_{2j_m})$. Center: Changing~$k+1$ negative crossings gives a new knot $R'$. Right: A doubling operator~$U_\eta$ with unknotted pattern for which $U_\eta(J_k)=R'$.}\label{fig:bipolar}
\end{figure}

\noindent We recall Theorem~\ref{thm:cables-indep} and give the proof. 

\begin{thm:cables-indep}
For any $n\ge1$, there exists an infinite family of knots $\{K^i\}_{i\geq 1}\subset \F_n\cap \B_{n-1}$, such that the set of cables $\{K^i_{p,1}\}_{i,p\geq 1}$ is linearly independent in $\Fn/\Fnpointfive$ and in $\B_{n-1}/\B_{n+1}$.
\end{thm:cables-indep}

\begin{proof}

For any $k\in \N$, let $R^{k,J_k}_{\alpha_k}$ be the robust doubling operator of Proposition \ref{prop:robust-with-ambiguity} where $-\rho_0(J_k)\notin \mathcal{FOS}(R^{k,U})$ and $J_k\in \mathcal{N}_0$. This can be arranged, for example, by taking $J_k$ to be a connected sum of sufficiently many copies of the left-handed trefoil. Let $\{T_{2j_m}\}_{m\geq 1}$ be the collection of twist knots given above.  As $\{\rho_0( T_{2j_m})\}_{m\geq 1}$ is an infinite linearly independent set and $\mathcal{FOS}(R^{k,U})$ contains at most two non-zero elements, there exist some $m_1$, $m_2$ so that no nontrivial linear combination of $\{\rho_0(T_{2j_m})\mid m \neq m_1,m_2\}$ is in the rational span of~$\mathcal{FOS}(R^{k,J_k})$.  

We now show that the set $\{(R^{k,J_k}_{\alpha_k})^n(T_{2j_m})\mid m\neq m_1,m_2\}$ has the desired properties. First, since $\operatorname{Arf}(T_{2j_m})=0$, we know that $T_{2j_m}\in \F_0$. By Proposition~\ref{prop:infection}, the knot~$(R^{k,J_k}_{\alpha_k})^n(T_{2j_m})\in \F_n$. Next we show that $(R^{k,J_k}_{\alpha_k})^n(T_{2j_m})\in \B_{n-1}$. Since these twist knots can be unknotted by undoing a single positive crossing at the clasp, we see that~$T_{2j_m}\in \mathcal{P}_0$ by Proposition \ref{prop:postive}.  Thus, by Proposition~\ref{prop:infection}, the knot $R^{k,J_k}_{\alpha_k}(T_{2j_m})\in \mathcal{P}_1\subset \mathcal{P}_0$. As depicted in Figure~\ref{fig:bipolar}, changing $k+1$ negative crossings transforms $R^{k,J_k}_{\alpha_k}(T_{2j_m})$ to the satellite knot $U_\eta(J_k)$, where $U$ is the unknot.  By assumption,~$J_k\in \mathcal{N}_0$ and so by Proposition~\ref{prop:infection}, we see that $U_\eta(J_k)\in \mathcal{N}_1\subset \mathcal{N}_0$. Thus, $R^{k,J_k}_{\alpha_k}(T_{2j_m})\in \mathcal{N}_0$ by Proposition~\ref{prop:postive}.  We now conclude that $(R^{k,J_k}_{\alpha_k})(T_{2j_m})\in \mathcal{B}_{0}$ and so by Proposition~\ref{prop:infection}, we see that~$(R^{k,J_k}_{\alpha_k})^n(T_{2j_m})\in \mathcal{B}_{n-1}$. To summarize, we have now shown that~$(R^{k,J_k}_{\alpha_k})^n(T_{2j_m})\in \Fn\cap \mathcal{B}_{n-1}$.

It remains to verify the linear independence claim, for which we use Corollary~\ref{cor:subgroupgen}. Recall that $\Delta_{R^{k,J_k}}(t)=\delta_k(t)\delta_k(t^{-1})$, where $\delta_k(t)=kt-(k+1)$. As we checked in the proof of Corollary \ref{cor:robust2}, $kt^p-(k+1)$ is prime for every $p$. This immediately implies condition (1) of  Corollary~\ref{cor:subgroupgen}.  We have already explicitly arranged that condition (2) holds by restricting to $m\neq m_1,m_2$.  Thus, we have that 
$\{(R^{k,J_k}_{\alpha_k})^n(T_{2j_m})_{p,1} \mid  p\ge 1,  m\neq m_1,m_2\}
$
is a linearly independent set in $\F_{n}/\F_{n.5}$ and in $\B_{n-1}/\B_{n+1}$.  Letting $\{K_i\}$ be an enumeration of the countable set $\{(R^{k,J_k}_{\alpha_k})^n(T_{2j_m}) \mid m\neq m_1,m_2\}$ completes the proof.
 \end{proof}

\section{Proof of Theorem \ref{thm:summand}}\label{sec:proofofB}

We briefly review Legendrian knots; see ~\cite{Etnyre:2005-1} for further details. An embedding $\mathcal{LK}$ of a knot $K$ in $S^3$ is said to be \emph{Legendrian} if $\mathcal{LK}$ is tangent to the $2$-planes of the standard contact structure of $S^3$. Up to an isotopy we assume that $K$ misses the point at infinity and so lies in  $S^3-\{\infty\} = \mathbb R^3$.
 The \emph{front projection} of a Legendrian knot is obtained by then projecting to the $xz$-plane (e.g.\ the middle panel in Figure~\ref{fig:leg_doubling}). The classical invariants of Legendrian knots, namely the \emph{Thurston-Bennequin number}, denoted by $\tb(\cdot)$, and the \emph{rotation number}, denoted by $\rot(\cdot)$, may be computed from the front projection as follows. Let $\Pi(\mathcal{LK})$ be a front projection of a Legendrian knot $\mathcal{LK}$. Then 
\[
\tb(\mathcal{LK}) = \text{writhe $\left(\Pi(\mathcal{LK})\right) - \frac{1}{2} \#$ cusps  $\left(\Pi(\mathcal{LK})\right)$}
\]
and
\[
\rot(\mathcal{LK}) = \frac{1}{2} \#\text{ downward-moving cups$\left(\Pi(\mathcal{LK})\right) - \frac{1}{2} \#$ upward-moving cusps  $\left(\Pi(\mathcal{LK})\right)$}.
\]
Given any Legendrian knot $\mathcal{LK}$ representing a knot $K$, we can perform a \emph{positive stabilization} which preserves the topological knot type but produces a new Legendrian knot $\mathcal{LK}'$ such that $\tb(\mathcal{LK}')=\tb(\mathcal{LK})-1$ and $\rot(\mathcal{LK}')=\rot(\mathcal{LK})+1$. 

Let $\tau(\cdot)$ denote the concordance invariant of Ozsv\'ath-Szab\'o \cite{Ozsvath-Szabo:2004-0} from Heegaard-Floer homology. The following inequality was proved by Plamenevskaya. 

\begin{theorem}[{\cite[Theorem~1]{Plamenevskaya:2004-1}}]~\label{thm:tbineq} Let $\mathcal{LK}$ be a Legendrian embedding of a knot $K$ in~$S^3$. Then 
$$\tb(\mathcal{LK}) + |\rot(\mathcal{LK})| \leq 2\tau (K) -1.$$
\end{theorem}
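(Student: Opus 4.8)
The plan is to reduce the Legendrian inequality to a statement about transverse knots and then prove the latter by comparing with a positive braid closure. First I would recall that the positive transverse push-off of $\mathcal{LK}$ realizes self-linking number $\tb(\mathcal{LK})-\rot(\mathcal{LK})$, while the positive push-off of the orientation-reversed Legendrian knot $\mathcal{LK}^{r}$ realizes $\tb(\mathcal{LK})+\rot(\mathcal{LK})$; the underlying knot of $\mathcal{LK}^{r}$ is $K$ with its orientation reversed, and $\tau$ does not change under reversing the orientation of a knot. Hence it suffices to prove that every transverse representative $\mathcal{T}$ of $K$ satisfies $\SL(\mathcal{T})\le 2\tau(K)-1$; applying this to both push-offs then gives $\tb(\mathcal{LK})+|\rot(\mathcal{LK})|\le 2\tau(K)-1$.

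Next, by Bennequin's theorem I would write $\mathcal{T}$ as the closure $\widehat{\beta}$ of a braid $\beta$ on $n$ strands with $c_{+}$ positive and $c_{-}$ negative crossings, so that $\SL(\mathcal{T})=(c_{+}-c_{-})-n$. Changing each of the $c_{-}$ negative crossings to a positive crossing produces a positive braid $\beta'$ on $n$ strands with $c_{+}+c_{-}$ crossings; since a crossing change does not alter the underlying permutation, $\widehat{\beta'}$ is again a knot, and its Bennequin surface is a fiber surface of genus $g$ with $2g=c_{+}+c_{-}-n+1$. I would then invoke two facts: (i) if $K_{+}$ is obtained from $K_{-}$ by changing a negative crossing to a positive one, then $\tau(K_{-})\le\tau(K_{+})\le\tau(K_{-})+1$; and (ii) $\tau$ of a positive braid closure equals its Seifert genus. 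Applying (i) along the $c_{-}$ crossing changes carrying $\widehat{\beta}$ to $\widehat{\beta'}$ gives $\tau(\widehat{\beta'})\le\tau(\widehat{\beta})+c_{-}$, hence using (ii),
\[
2\tau(K)\ \ge\ 2\tau(\widehat{\beta'})-2c_{-}\ =\ (c_{+}+c_{-}-n+1)-2c_{-}\ =\ (c_{+}-c_{-})-n+1,
\]
which rearranges to $2\tau(K)-1\ge \SL(\mathcal{T})$, as desired.

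The main obstacle is ingredient (ii): the inequality $\tau(K)\ge g(K)$ for positive braid closures is genuinely a Heegaard-Floer input and is not a formal consequence of the four-genus bound $|\tau|\le g_{4}$. I would obtain it from the fact that a positive braid closure is fibered and strongly quasipositive, so the associated open book supports the tight contact structure $\xi_{\mathrm{std}}$ on $S^{3}$; non-triviality of the Ozsv\'ath--Szab\'o contact class then forces $\tau(K)=g(K)$. (Alternatively one can bootstrap from the torus-knot computation $\tau(T_{p,q})=(p-1)(q-1)/2$ together with additivity of both $\tau$ and the Seifert genus under Murasugi sum, since every positive braid closure is obtained from positive torus links by plumbing.) Ingredient (i) is standard and follows by analyzing the knot Floer complex under the skein triangle, i.e. a single band move. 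Finally, I would note an alternative route bypassing braids: perform Legendrian (contact $(-1)$-)surgery along $\mathcal{LK}$, so the surgered contact manifold has non-vanishing contact invariant and is therefore tight; combining tightness of this $(\tb-1)$-framed surgery with the surgery-exact-triangle description of $\tau$ yields the same bound.
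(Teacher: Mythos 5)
The paper does not prove this statement --- it is imported verbatim from Plamenevskaya's paper --- so there is no internal argument to compare against. Your proof (pass to the transverse push-offs of $\mathcal{LK}$ and its reverse, braid the transverse knot, convert to a positive braid by changing the $c_-$ negative crossings, and combine the crossing-change inequality $\tau(K_-)\le\tau(K_+)\le\tau(K_-)+1$ with $\tau=g$ for positive braid closures) is exactly Plamenevskaya's original argument, built on Livingston's computations, and the bookkeeping $2\tau(K)\ge (c_+-c_-)-n+1$ is correct. The one caveat: your parenthetical alternative for ingredient (ii) via ``additivity of $\tau$ under Murasugi sum'' is not an established fact and should be dropped in favor of the contact-class argument you give first (or Livingston's original proof via the refined four-genus bound for surfaces in blown-up $4$-manifolds).
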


The pattern knot $R$ corresponding to any doubling operator $R_\alpha$ can be considered as a knot within $S^1 \times \mathbb{R}^2=\Int(S^3\setminus N(\alpha))$. A doubling operator $R_\alpha$ is said to be a \emph{Legendrian doubling operator} if the embedding $\mathcal{LR}_\alpha$ of $R$ in $S^1 \times \mathbb{R}^2$ is tangent to the $2$-planes of the standard contact structure on $S^1 \times \mathbb{R}^2$. We also obtain front projections as before on the~$xz$-plane, where the $x$-direction is understood to be periodic. We can also draw a single periodic domain as in the first panel in Figure~\ref{fig:leg_doubling}. Using front projections, we can define the Thurston-Bennequin number and the rotation number of a Legendrian doubling operator using the same combinatorical formulae as for Legendrian knots given above.

\begin{figure}[h]
  \includegraphics[width=\textwidth]{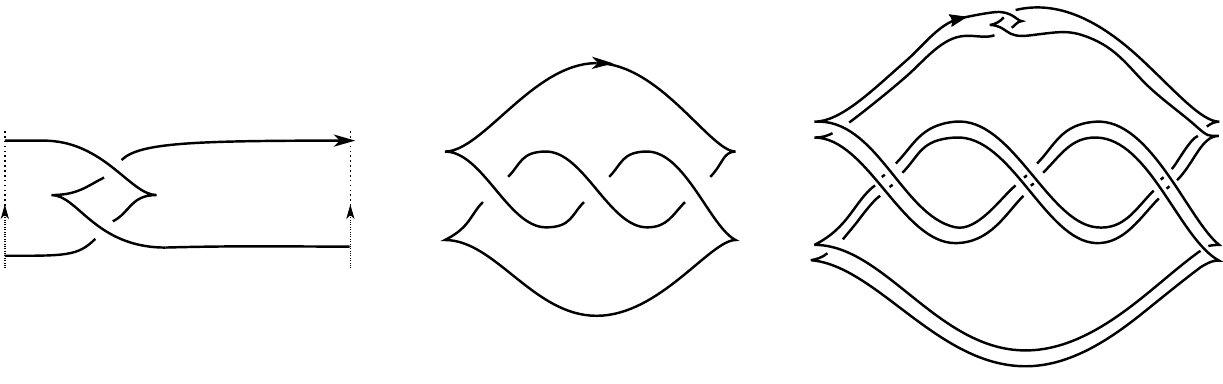}
  \put(-375,-20){$\mathcal{LR}_\alpha$}
  \put(-230,-20){$\mathcal{LK}$}
  \put(-85,-20){$\mathcal{LR}_\alpha(\mathcal{LK})$}
  \put(-395,-35){$\tb(\mathcal{LR}_\alpha)=1$}
  \put(-398,-50){$\rot(\mathcal{LR}_\alpha)=0$}
  \put(-250,-35){$\tb(\mathcal{LK})=1$}
  \put(-253,-50){$\rot(\mathcal{LK})=0$}
  \put(-105,-35){$\tb(\mathcal{LR}_\alpha(\mathcal{LK}))=1$}
  \put(-108,-50){$\rot(\mathcal{LR}_\alpha(\mathcal{LK}))=0$}
\caption{The Legendrian doubling operation.}\label{fig:leg_doubling}
\end{figure}

Let $\mathcal{LR}_\alpha$ be a Legendrian doubling operator in $S^1 \times \mathbb{R}^2$ with $2n$ end points, and $\mathcal{LK}$ be a Legendrian knot in $S^3$. Let $\mathcal{LR}_\alpha(\mathcal{LK})$ be the Legendrian knot obtained by taking $n$ vertical parallel copies of $\mathcal{LK}$ and inserting $\mathcal{LR}_\alpha$ in an appropriately oriented strand of $\mathcal{LK}$ (see the third panel in Figure~\ref{fig:leg_doubling} for an example). This is called the \emph{Legendrian doubling operation} or the \emph{Legendrian satellite operation}; see also \cite{Ng:2001-1,Ng-Traynor:2004-1,Ray:2015-1,Park-Ray:2018-1}. Observe that when $\tb(\mathcal{LK})=0$, $\mathcal{LR}_\alpha(\mathcal{LK})$ is a Legendrian representative of $R_\alpha(K)$. The following proposition is useful to compute the Thurston-Bennequin number and the rotation number of $\mathcal{LR}_\alpha(\mathcal{LK})$ (see also \cite[Remark~2.4]{Ng:2001-1}).

\begin{proposition}~\label{prop:tbdoubling} For a Legendrian doubling operator $\mathcal{LR}_\alpha$ and a Legendrian knot $\mathcal{LK}$, $$\tb \left( \mathcal{LR}_\alpha(\mathcal{LK})\right) =\tb\left( \mathcal{LR}_\alpha\right) \text{ and }\rot\left( \mathcal{LR}_\alpha(\mathcal{LK})\right) =\rot\left( \mathcal{LR}_\alpha\right).$$
\end{proposition}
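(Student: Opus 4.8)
The plan is to read $\tb$ and $\rot$ off an explicit front projection of $\mathcal{LR}_\alpha(\mathcal{LK})$, as in the rightmost panel of Figure~\ref{fig:leg_doubling}, using the combinatorial formulae recalled above. First I would pin down this front precisely. Fix a front projection $\Pi(\mathcal{LK})$ of the companion and a front projection $\Pi(\mathcal{LR}_\alpha)$ of the pattern inside one fundamental domain of the periodic strip, and choose a short sub-arc of $\mathcal{LK}$ missing all crossings and cusps of $\Pi(\mathcal{LK})$. The satellite front is then obtained by replacing $\Pi(\mathcal{LK})$ with its Legendrian parallel push-off on $N$ strands — the $N$ vertically stacked copies of $\Pi(\mathcal{LK})$, kept nested as they pass through each cusp — and then, over the chosen sub-arc, cutting open the resulting trivial bundle of $N$ strands and splicing in $\Pi(\mathcal{LR}_\alpha)$. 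Here $N$ is the number of parallel copies of $\mathcal{LK}$ appearing in the Legendrian satellite construction recalled above. The key structural observation is that, since $R_\alpha$ is a doubling operator, the algebraic count of these strands — the winding number of $R$ in the solid torus $S^3\setminus N(\alpha)$ — equals $\lk(R,\alpha)=0$; writing $\varepsilon_j=\pm 1$ for whether the $j$th copy of $\mathcal{LK}$ runs with or against the orientation of $\mathcal{LK}$, we thus have $\sum_j\varepsilon_j=\lk(R,\alpha)=0$.

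Next, since writhe and cusp count are tallied crossing-by-crossing and cusp-by-cusp, and the splicing only alters the diagram over the trivial $N$-strand region (which contributed nothing), both $\tb$ and $\rot$ of the satellite front equal the corresponding invariant of the full $N$-copy of $\Pi(\mathcal{LK})$ plus that of $\Pi(\mathcal{LR}_\alpha)$. The $\Pi(\mathcal{LR}_\alpha)$ summand is exactly $\tb(\mathcal{LR}_\alpha)$ and $\rot(\mathcal{LR}_\alpha)$, by the definition of these invariants for a doubling operator. The other summand is the total Thurston--Bennequin, resp.\ rotation, number of the Legendrian link formed by the $N$ parallel push-offs of $\mathcal{LK}$. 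Now each such push-off is $\mathcal{LK}$ up to orientation, so it has $\tb=\tb(\mathcal{LK})$ (invariant under reversing orientation) and rotation number $\varepsilon_j\,\rot(\mathcal{LK})$; and, being a \emph{Legendrian} (contact-framed) parallel, any two of the copies have pairwise linking number $\varepsilon_i\varepsilon_j\,\tb(\mathcal{LK})$. Using $\tb(L_1\cup\cdots\cup L_N)=\sum_j\tb(L_j)+2\sum_{i<j}\lk(L_i,L_j)$ and $\rot(L_1\cup\cdots\cup L_N)=\sum_j\rot(L_j)$, the total $\tb$ of the $N$-copy is $\big(\sum_j\varepsilon_j\big)^2\tb(\mathcal{LK})=\lk(R,\alpha)^2\,\tb(\mathcal{LK})$ and its total rotation number is $\big(\sum_j\varepsilon_j\big)\rot(\mathcal{LK})=\lk(R,\alpha)\,\rot(\mathcal{LK})$. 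Since $\lk(R,\alpha)=0$, this summand vanishes in both cases, giving $\tb(\mathcal{LR}_\alpha(\mathcal{LK}))=\tb(\mathcal{LR}_\alpha)$ and $\rot(\mathcal{LR}_\alpha(\mathcal{LK}))=\rot(\mathcal{LR}_\alpha)$, as claimed.

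The step I expect to require the most care is the first: fixing the front-projection model of the Legendrian satellite precisely enough that the additivity in the second paragraph is literal, and in particular checking that the $N$-strand cable really is the \emph{Legendrian} parallel push-off of $\mathcal{LK}$ — this is exactly what makes the pairwise linking numbers equal to $\pm\tb(\mathcal{LK})$ rather than, say, the blackboard framing, and it is the only place where the value $\tb(\mathcal{LK})$ (and hence the hypothesis $\lk(R,\alpha)=0$) really enters. If one prefers, this bookkeeping can be avoided altogether by quoting the standard formula for the classical invariants of a Legendrian satellite in terms of the winding number of the pattern (see, e.g., \cite[Remark~2.4]{Ng:2001-1}) and specializing to winding number $\lk(R,\alpha)=0$. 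Either route yields the proposition.
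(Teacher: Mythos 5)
Your proposal is correct. The paper gives no proof of this proposition at all---it simply points to \cite[Remark~2.4]{Ng:2001-1}---and your front-diagram bookkeeping (writhe and cusp counts split into the $N$-copy part and the pattern part, with the $N$-copy contribution equal to $w^2\tb(\mathcal{LK})$ and $w\rot(\mathcal{LK})$ for winding number $w=\lk(R,\alpha)=0$) is exactly the standard argument behind that remark, so it correctly fills in what the paper leaves to the reference.
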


In what follows we shall restrict ourselves to the doubling operators $Q_{\alpha_k}^{k}$, which we showed to be robust in Proposition~\ref{prop:robust} when $k\ge 3$.  

\begin{proposition}~\label{prop:taucomp} Let $K$ be a knot with a Legendrian representative with non-negative Thurston-Bennequin number. Then $\tau\left(\left(Q_{\alpha_k}^{k}\right)^n(K)\right) = 1$ for all $k, n\geq 1$. 
\end{proposition}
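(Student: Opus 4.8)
The plan is to bound $\tau\left(\left(Q_{\alpha_k}^k\right)^n(K)\right)$ from above and below by $1$, using the Legendrian machinery just developed together with the structure of the robust operator $Q_{\alpha_k}^k$.

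For the lower bound, I would first produce an explicit Legendrian representative of $Q_{\alpha_k}^k$ as a Legendrian doubling operator $\mathcal{LQ}^k$ in $S^1\times\mathbb R^2$ with $\tb\left(\mathcal{LQ}^k\right)=1$ and $\rot\left(\mathcal{LQ}^k\right)=0$; this is the Legendrian analogue of Figure~\ref{fig:robust} and should be a routine front-projection computation using the combinatorial formulae for $\tb$ and $\rot$ (the $k$ full twists in the dashed band contribute writhe and cusps that cancel appropriately). Given a Legendrian representative $\mathcal{LK}$ of $K$ with $\tb(\mathcal{LK})\ge 0$, stabilize positively if necessary so that $\tb(\mathcal{LK})=0$; then $\mathcal{LQ}^k(\mathcal{LK})$ is a Legendrian representative of $Q_{\alpha_k}^k(K)$ and, by Proposition~\ref{prop:tbdoubling}, has $\tb=1$ and $\rot=0$. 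Iterating the Legendrian doubling operation $n$ times (at each stage the incoming Legendrian knot has $\tb=1$, so we positively stabilize once to bring it to $\tb=0$ before applying the operator, which leaves the topological type unchanged) yields a Legendrian representative of $\left(Q_{\alpha_k}^k\right)^n(K)$ with $\tb=1$. Plamenevskaya's inequality (Theorem~\ref{thm:tbineq}) then gives $1 = \tb + |\rot| \le 2\tau\left(\left(Q_{\alpha_k}^k\right)^n(K)\right)-1$, hence $\tau\left(\left(Q_{\alpha_k}^k\right)^n(K)\right)\ge 1$.

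For the upper bound $\tau\le 1$, I would exploit the fact that $Q^k$ is a ribbon knot with a genus one Seifert surface visible in Figure~\ref{fig:robust}, together with the general behavior of $\tau$ under satellite operations. The cleanest route: for any knot $K$, $\left(Q_{\alpha_k}^k\right)^n(K)$ bounds a genus one Seifert surface obtained by iterated infection on the genus one Seifert surface for $Q^k$ (infection along a curve $\alpha_k$ in the complement of the surface does not change the genus), so $\left(Q_{\alpha_k}^k\right)^n(K)$ has Seifert genus one. Since $|\tau|\le g_4 \le g_3$, we get $\tau\left(\left(Q_{\alpha_k}^k\right)^n(K)\right)\le 1$. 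Combining the two bounds forces $\tau\left(\left(Q_{\alpha_k}^k\right)^n(K)\right)=1$ for all $k,n\ge 1$.

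The main obstacle I anticipate is the bookkeeping in the lower-bound argument: one must verify carefully that $\mathcal{LQ}^k$ can be drawn with $\tb=1$ and $\rot=0$ (the $k$ right-handed twists are concentrated in the bands rather than all strands, so the cusp and writhe count needs care), and that the stabilization needed to feed a $\tb=1$ Legendrian knot into the next operator does not disturb the running count — here Proposition~\ref{prop:tbdoubling} is exactly what makes the induction go through, since it says $\tb$ and $\rot$ of the output depend only on the operator and not on the companion. The upper bound is comparatively soft, relying only on the genus bound for $\tau$, provided one checks that iterated infection along $\alpha_k$ preserves the genus one Seifert surface.
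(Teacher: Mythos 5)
Your proposal follows essentially the same route as the paper's proof: exhibit a Legendrian representative of the doubling operator with $\tb+\lvert\rot\rvert=1$, feed in a $\tb=0$ representative of the companion, apply Proposition~\ref{prop:tbdoubling} and Plamenevskaya's inequality for the lower bound $\tau\ge 1$, use the genus-one Seifert surface and $\tau\le g_3$ for the upper bound, and iterate. The one discrepancy is your claimed values $\tb(\mathcal{LQ}_{\alpha_k}^{k})=1$ and $\rot(\mathcal{LQ}_{\alpha_k}^{k})=0$: the paper's explicit front projection (Figure~\ref{fig:robustleg}) gives $\tb=0$ and $\rot=1$, which still yields $\tb+\lvert\rot\rvert=1$ and in fact makes the iteration cleaner (the output already has $\tb=0$, so no stabilization is needed between stages); this does not affect the validity of your argument, but the front-projection computation you defer is the real content of the lemma and should be carried out.
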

Notice that any twist knot with $j$ full left-handed twists has a Legendrian representative with non-negative Thurston-Bennequin number. This is given in Figure~\ref{fig:legtwistknots}.
\begin{figure}[htb]
\labellist
\small\hair 2pt
\pinlabel $j-1$ at 22 35
\endlabellist
\centering
\includegraphics[height=3.5cm]{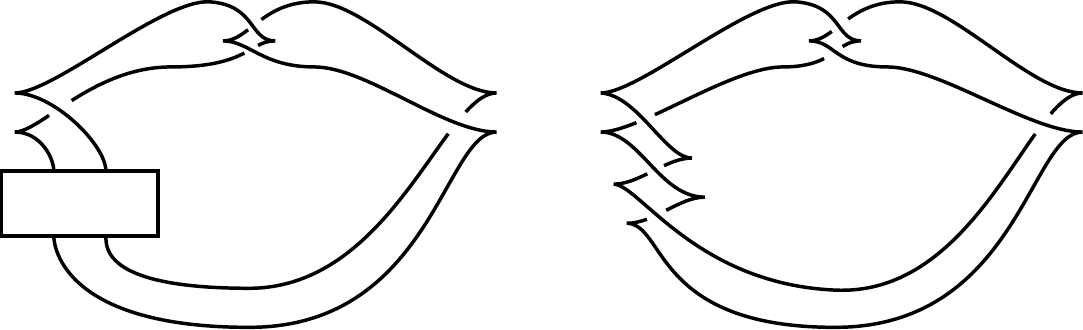}
\caption{Left: A Legendrian diagram $\mathcal{LT}_j$ for the twist knot $T_{j}$ with $j$ full negative twists. The box containing $j-1$ denotes $j-1$ full left-handed twists. We compute that $\tb\left(\mathcal{LT}_j\right)=1$ and  $\rot\left(\mathcal{LT}_j\right)=0$ for all $j\geq 1$. Right: The specific Legendrian diagram $\mathcal{LT}_3$ for $j=2$.}\label{fig:legtwistknots}
\end{figure}

\begin{proof} Observe that $\mathcal{LQ}_{\alpha_k}^{k}$, shown in Figure~\ref{fig:robustleg}, 
\begin{figure}[htb]
\labellist
\small\hair 2pt
\pinlabel $k$ at 40 150
\pinlabel $\alpha_k$ at 25 117
\pinlabel $(a)$ at 48 75

\pinlabel $k$ at 182 150
\pinlabel $\alpha_k$ at 167 117
\pinlabel $(b)$ at 185 75

\pinlabel $k$ at 332 150
\pinlabel $(c)$ at 330 75

\pinlabel $(d)$ at 185 0

\endlabellist
\centering
\includegraphics{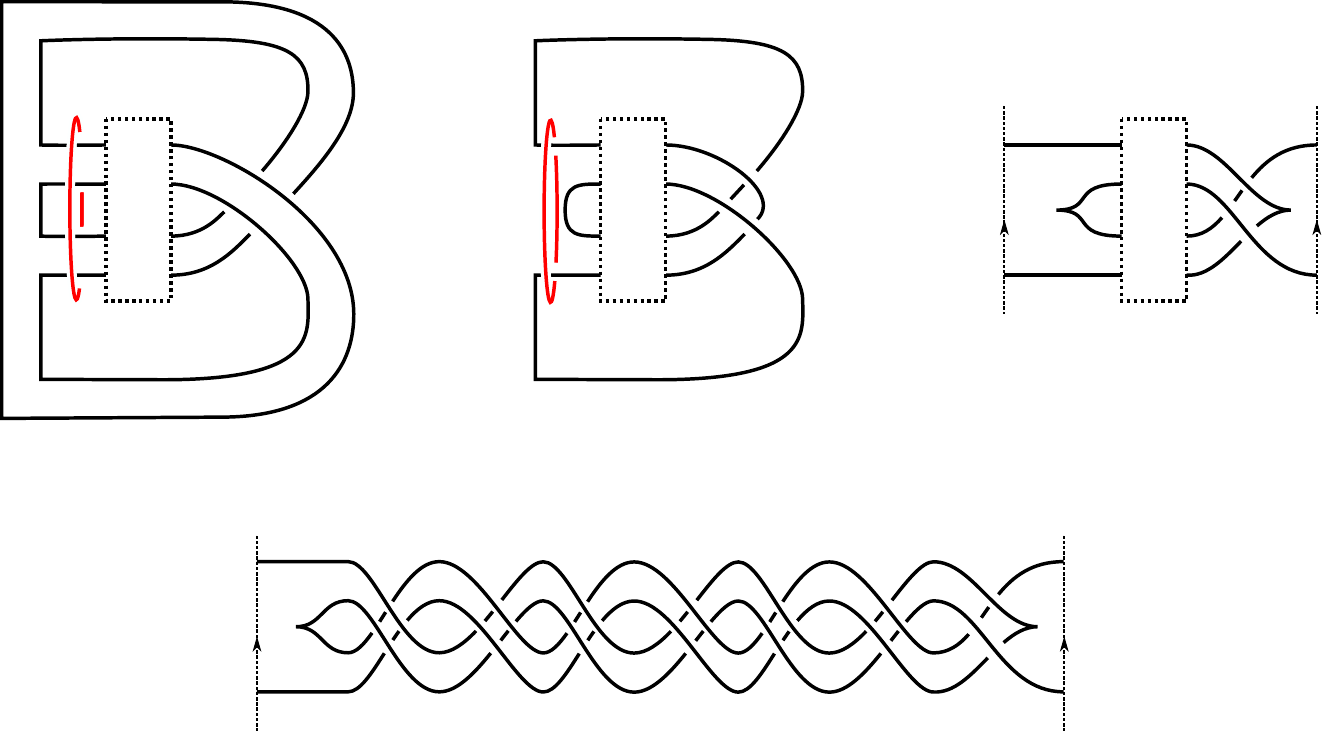}
\caption{(a) The robust doubling operator $Q^k_{\alpha_k}$. (b) The result of an isotopy. (c) A Legendrian diagram $\mathcal{LQ}_{\alpha_k}^{k}$ for the robust doubling operator $Q_{\alpha_k}^{k}$. We compute that $\tb(\mathcal{LQ}_{\alpha_k}^{k})=0$ and  $\rot(\mathcal{LQ}_{\alpha_k}^{k})=1$. (d) The specific Legendrian diagram $\mathcal{LQ}_{\alpha_3}^{3}$ for $Q^3_{\alpha_3}$.}\label{fig:robustleg}
\end{figure}
is a Legendrian representative of $Q^{k}$. Given a Legendrian representative  for $K$ with non-negative Thurston-Bennequin number, stabilize to obtain a Legendrian representative $\mathcal{LK}$ such that $\tb(\mathcal{LK})=0$. Then apply the formulae from Proposition~\ref{prop:tbdoubling} to conclude that $$\tb \left( \mathcal{LQ}_{\alpha_k}^{k}(\mathcal{LK})\right) =\tb\left(\mathcal{LQ}_{\alpha_k}^{k}\right) =0 \text{ and } \rot \left( \mathcal{LQ}_{\alpha_k}^{k}(\mathcal{LK})\right) =\rot \left(\mathcal{LQ}_{\alpha_k}^{k}\right) =1.$$ Recall that the $\tau$ invariant gives a lower bound for the Seifert genus of a knot \cite{Ozsvath-Szabo:2003-1}. Since~ $Q_{\alpha_k}^{k}(K)$ has genus one, we conclude that $\tau\left(Q_{\alpha_k}^{k}(K)\right) = 1$ by Theorem~\ref{thm:tbineq}. 

Note that $Q_{\alpha_k}^{k}(K)$ has a Legendrian representative $\mathcal{LQ}_{\alpha_k}^{k}(\mathcal{LK})$ with zero Thurston-Bennequin number. This allows us to repeat this process and conclude that 
\[
\tau\left(\left(Q_{\alpha_k}^{k}\right)^n(K)\right) = 1 \text{ for all } k, n\geq 1.\qedhere
\] 
\end{proof}

The following theorem of Feller and the second and third authors is the last ingredient needed for our proof of Theorem~\ref{thm:summand}. 

\begin{theorem}[{\cite[Theorem~5.21]{Feller-Park-Ray:2016-1}}]~\label{thm:Upsilon}For any genus one knot $K$ with $\tau(K)=1$, the knots $\{K_{2^j,1}\}_{j \geq 0}$ spans an infinite rank summand of $\mathcal{C}$.
\end{theorem}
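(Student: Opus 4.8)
The plan is to use the Ozsv\'ath--Stipsicz--Szab\'o invariant $\Upsilon$, which is a homomorphism $\Upsilon\colon\C\to\mathrm{PL}([0,2])$ into the group of piecewise-linear functions $[0,2]\to\mathbb{R}$; recall that each $\Upsilon_L$ is continuous, has only finitely many singular (non-smooth) points in $(0,2)$, has integer slopes bounded in absolute value by the Seifert genus of $L$, is symmetric about $t=1$, and satisfies $\Upsilon_L'(0^+)=-\tau(L)$. First I would record the effect of $(p,1)$-cabling on $\Upsilon$, using the cabling formula of \cite{Chen:2016-1} together with Hedden's description \cite{Hedden:2009-1} of the knot Floer homology of a cable near its top Alexander grading: for a genus one knot $K$ with $\tau(K)=g(K)=1$ and $p\geq 1$, the function $\Upsilon_{K_{p,1}}$ has slope $-p$ on a neighbourhood $[0,t_p)$ of $0$, where $t_p>0$ is strictly decreasing in $p$ with $t_p\to 0$, and --- crucially --- for $p\geq 2$ the slope of $\Upsilon_{K_{p,1}}$ jumps by exactly $+1$ at $t_p$. (Also $\tau(K_{p,1})=p$, by Hom's cabling formula, using $\varepsilon(K)=1$, which holds since $\tau(K)=g(K)>0$.)

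From this the basic linear algebra begins. For $j\geq 1$ define the homomorphism $\phi_j\colon\C\to\mathbb{Z}$ by letting $\phi_j(L)$ be the slope jump $\Upsilon_L'(t_{2^j}^{+})-\Upsilon_L'(t_{2^j}^{-})$ of $\Upsilon_L$ at $t_{2^j}$; this is integer-valued because $\Upsilon_L$ has integer slopes, and $\phi_j(L)=0$ for all but finitely many $j$ because $t_{2^j}\to 0$ while $\Upsilon_L$ has only finitely many singular points. By the first step, $\phi_j(K_{2^i,1})=0$ whenever $0\leq i<j$ --- since then $t_{2^j}<t_{2^i}$ lies in the region on which $\Upsilon_{K_{2^i,1}}$ is still linear --- while $\phi_j(K_{2^j,1})=+1$. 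Thus the (infinite, column-finite) matrix $A:=\bigl(\phi_j(K_{2^i,1})\bigr)_{i,j\geq 1}$ is upper unitriangular; in particular $\{K_{2^j,1}\}_{j\geq 1}$ is linearly independent in $\C$, but I want the stronger summand statement.

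To that end I would produce a homomorphism $\Theta\colon\C\to\bigoplus_{j\geq 0}\mathbb{Z}$ carrying each $K_{2^j,1}$ to the $j$th standard basis vector; any such $\Theta$ splits the inclusion $\langle K_{2^j,1}\rangle\hookrightarrow\C$ and hence exhibits $\langle K_{2^j,1}:j\geq 0\rangle$ as a free summand of $\C$ with basis $\{K_{2^j,1}\}$. Since $A$ is upper unitriangular and column-finite it has an upper unitriangular integral inverse $B=A^{-1}$, computed entrywise by the (termwise finite) series $B=\sum_{k\geq 0}(I-A)^k$. For $j\geq 1$ set $\psi_j:=\sum_{i\geq j}B_{ji}\,\phi_i\colon\C\to\mathbb{Z}$, which is well defined because only finitely many $\phi_i$ are nonzero on any given knot, and which satisfies $\psi_j(K_{2^i,1})=(BA)_{ji}=\delta_{ij}$ for all $i\geq 0$. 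Finally set $\psi_0:=\tau-\sum_{j\geq 1}2^j\psi_j$ (again a finite sum on each knot); using $\tau(K_{2^i,1})=2^i$ we get $\psi_0(K_{2^i,1})=2^i-\sum_{j\geq 1}2^j\delta_{ij}=\delta_{i0}$. Then $\Theta:=(\psi_j)_{j\geq 0}$ has the required property, and $\C=\langle K_{2^j,1}\rangle\oplus\ker\Theta$ with $\{K_{2^j,1}\}_{j\geq 0}$ a basis of the infinite rank summand, as desired.

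The hard part will be the cabling-formula input in the first paragraph: one must show that the first singular point of $\Upsilon_{K_{p,1}}$ occurs at a location strictly decreasing in $p$, and that there the slope jumps by exactly $1$ when $p\geq 2$. The strict monotonicity is what makes $A$ triangular (hence gives linear independence), and the unit jump size is what makes $A$ unimodular over $\mathbb{Z}$ (hence gives a direct summand rather than merely a free subgroup); if the jump were some $d\geq 2$ then $A^{-1}$ would fail to be integral and the argument would only yield linear independence. Establishing this unit-jump phenomenon is exactly where the genus one and $\tau=g$ hypotheses are used, via the structure of $\widehat{HFK}$ of an iterated $(2,1)$-cable in its two highest Alexander gradings; everything downstream of it is formal $\mathbb{Z}$-linear algebra.
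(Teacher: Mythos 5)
There is no proof of this statement in the paper itself: Theorem~\ref{thm:Upsilon} is imported verbatim from \cite{Feller-Park-Ray:2016-1}, where the argument is also run through the $\Upsilon$-invariant, so the only meaningful comparison is with that source. Your second and third paragraphs are fine and are essentially the standard Ozsv\'ath--Stipsicz--Szab\'o summand mechanism: the slope-jump maps $\phi_j$ at fixed parameters are integer-valued homomorphisms on $\C$, they vanish on all but finitely many indices for each fixed knot, a unitriangular column-finite integral matrix inverts over $\mathbb{Z}$, and your $\Theta=(\psi_j)_{j\ge 0}$ (with the $\tau$-correction in the $j=0$ slot, using $\tau(K_{2^i,1})=2^i$) does split the inclusion and exhibit $\{K_{2^j,1}\}_{j\ge 0}$ as a basis of a summand. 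That part I have no quarrel with.

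The genuine gap is your first paragraph, and it is not a small one: the assertion that for every genus one knot $K$ with $\tau(K)=1$ and every $p\ge 2$ the function $\Upsilon_{K_{p,1}}$ is linear of slope $-p$ on $[0,t_p)$ with $t_p$ strictly decreasing to $0$, \emph{and} that the slope jumps by exactly $+1$ at $t_p$, is precisely the analytic content of the theorem, and you do not prove it; you gesture at \cite{Chen:2016-1} and \cite{Hedden:2009-1}, but neither reference hands you this statement. Chen's cabling results are inequalities comparing $\Upsilon_{K_{p,q}}$ with $\Upsilon_K(pt)$ and $\Upsilon_{T_{p,q}}$ on restricted parameter ranges, and Hedden's computations of knot Floer homology of cables do not by themselves locate the first singularity of $\Upsilon$ of a $(p,1)$-cable of an arbitrary (not necessarily fibered, not necessarily thin) genus one knot with $\tau=1$, let alone pin the jump there to be $1$. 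Note that soft constraints cannot do this: the cable satisfies $\tau=g_4=g=p$ and has integer slopes bounded by $p$, but so does $\#^p T_{2,3}$, whose $\Upsilon$ is $-pt$ on all of $[0,1]$ with first singularity at $t=1$; so the claimed early singularity, its strict monotonicity in $p$, and especially the unit jump require a genuine analysis of the cable's complex near its top Alexander gradings, which is exactly the computation carried out in \cite{Feller-Park-Ray:2016-1} and exactly what your write-up defers. As you yourself observe, if the jump were any integer $d\ge 2$ your matrix would not invert over $\mathbb{Z}$ and the argument would degrade to linear independence only; so until that input is established (or replaced by whatever exact formula for $\Upsilon_{K_{p,1}}$ one can actually prove), the proposal proves only the formal half of the theorem.
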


\noindent We recall Theorem~\ref{thm:summand} and give the proof.

\begin{thm:summand} For any $n\geq 0$, there exists an infinite family of knots $\{K^i\}_{i\ge 1}\subset  \F_n$ such that for each fixed $i$ the set $\{K^i_{2^j,1}\}_{j\geq 0}$ is a basis for an infinite rank summand of $\Fn$ and for which $\{K^i_{p,1}\}_{i, p\geq 1}$ is linearly independent in $\mathcal{F}_{n}/\mathcal{F}_{n.5}$.\end{thm:summand}

\begin{proof} For $n=0$, this follows from classical results~\cite{Levine:1969-1, Litherland:1979-1} along with the facts that a knot $K$ is $0$-solvable if and only if $\Arf(K)=0$ and that a knot $K$ is $0.5$-solvable if and only if $K$ is algebraically slice~\cite[Remark 1.3.2, Theorem 1.1]{Cochran-Orr-Teichner:2003-1}. 
Now, fix a positive integer $n$ and consider the robust doubling operators $Q^k_{\alpha_k}$ for $k\ge 3$.  Let $\{T_{2j_m}\}$ be the set of twist knots for which~$\{\rho_0(T_{2j_m})\}$ is linearly independent over $\Q$ from the proof of Theorem~\ref{thm:cables-indep}.  Notice that $\mathcal{FOS}(Q^k)$ has exactly one nonzero element, so that just as in the proof of Theorem \ref{thm:cables-indep}, there is some $m_1\in \N$ so that $\{(Q^k_{\alpha_k})^n(T_{2j_m})_{p,1}\mid m\neq m_1, p\ge 1\}$ is a linearly independent subset of $\F_n/\F_{n.5}$.  

Now fix $k\ge 3$ and $m\neq m_1$.  We now show that $\{(Q^k_{\alpha_k})^n(T_{2j_m})_{2^j,1}\}_{j\geq 0}$ generates an infinite rank summand of $\F_n$.  By Proposition~\ref{prop:taucomp}, $\tau((Q^k_{\alpha_k})^n(T_{2j_m})) = 1$ and by construction~$(Q^k_{\alpha_k})^n(T_{2j_m})$ has genus one. By Theorem~\ref{thm:Upsilon}, we conclude that $\{(Q^k_{\alpha_k})^n(T_{2j_m})_{2^j,1}\}_{j\geq 0}$ spans an infinite rank summand of $\C$ and therefore, of~$\mathcal{F}_{n}$. As in the proof of Theorem \ref{thm:cables-indep}, letting $\{K_i\}$ be an enumeration of $\{(Q^k_{\alpha_k})^n(T_{2j_m})\mid m\neq m_1\}$ completes the proof.
\end{proof}

\begin{remark}
If one is not interested in cables, one could construct an infinite rank summand of any $\mathcal{F}_{n}$ whose image in $\mathcal{F}_{n}/\mathcal{F}_{n.5}$ is any fixed infinite rank subgroup via an abstract linear algebra argument using the fact that $\C$ has an infinite rank summand contained in~$\bigcap \Fn$. More explicitly, we can do the following construction. Fix $n\geq 0$. Let~$\{J^m\}_{m\geq 1}$ be a basis for an infinite rank summand of the subgroup of $\C$ consisting of smooth concordance classes of topologically slice knots, produced by~\cite{Ozsvath-Stipsicz-Szabo:2017-1} (the first such summand was found by Hom in~\cite{Hom:2015-3} using her $\varepsilon$-invariant). In particular, the sequence of first singularities of $\{\Upsilon(J^m)\}_{m\geq 1}$ is monotone decreasing and converges to $0$, and the slope change at the first singularities is the least allowed. Now, let $K$ be any nontrivial knot in $\Fn/\Fnpointfive$ with a genus one Seifert surface. Then the first singularity of $\Upsilon(J^m\# K)$, as well as the slope change there,  coincides with those of $\Upsilon(J^m)$. This follows since $\Upsilon(K)$ is either the zero function or has a unique singularity at $1$ (see~\cite[Proposition~3.1]{Feller-Park-Ray:2016-1}). Let~$\{K^m\}_{m\geq 1}$ be a set of genus one knots which is linearly independent in $\Fn/\Fnpointfive$, produced by~\cite{Cochran-Harvey-Leidy:2009-1}. Then the sequence of first singularities of $\{\Upsilon(J^m\#K^m)\}_{m\geq 1}$ is monotone decreasing and converges to~$0$, and the slope change at each first singularity is the least allowed. Thus,~$\{J^m\# K^m\}_{m\geq 1}$ is a basis for an infinite rank summand of $\Fn$ and projects to the linearly independent set~$\{K^m\}_{m\geq 1}$ in $\Fn/\Fnpointfive$. Similar constructions suffice to show the existence of a basis for an infinite rank summand of $\mathcal{F}_n$ projecting to any choice of infinite rank subgroup of~$\Fn/\Fnpointfive$. In that case, one should use a subsequence of~$\{J^m\}_{m\geq 1}$ to ensure the condition on first singularities.
\end{remark}

\section{Proof of Theorems~\ref{thm:kauffman}~and~\ref{thm:kauffmanprime}}\label{sec:proofofC}

\noindent Recall the following theorem of Cochran and the first author. 

\begin{theorem}[{\cite[Theorem~1.5 and Proposition~5.2]{Cochran-Davis:2015-1}}]~\label{thm:kauffmanD} There exists a slice knot $K$ with a genus one Seifert surface and derivative curves $d$ and $d'$, such that $d$ and $d'$ have non-vanishing $\Arf$ invariant and Levine-Tristram signature function. 

More precisely for any knot $L$, there exists a slice knot $K_L$ with the genus one Seifert surface and derivative curves $d = L \# - (L_{2,1})$ and $d'$ described in Figure~\ref{fig:CDDeriv}.  \end{theorem}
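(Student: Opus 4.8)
The plan is to construct $K_L$ directly as the boundary of an explicit genus one Seifert surface $F_L$, built from a disk with two bands as in Figure~\ref{fig:CDDeriv}, with the knotting, clasping, and twisting of the two bands chosen so that three separate assertions hold: (a)~$K_L$ is ribbon, hence slice; (b)~the two derivatives of $F_L$ are $d = L\#-L_{2,1}$ and a knot $d'$ that can be read off the figure; and (c)~for a suitable choice of $L$ both $d$ and $d'$ have nonzero Arf invariant and nontrivial Levine--Tristram signature function. Since, once (a) holds, $K_L$ is algebraically slice, the elementary linear algebra quoted above guarantees that $F_L$ has exactly two derivatives up to orientation, so (b) only asks us to identify them.

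First I would record the Seifert form of $F_L$. In the standard symplectic basis $\{x_1,x_2\}$ of $H_1(F_L)$ dual to the two bands, the surface is of ``generalized twist knot'' type and the Seifert matrix has the shape $\left(\begin{smallmatrix} a & b+1 \\ b & c \end{smallmatrix}\right)$ for integers $a,b,c$ determined by the twist and clasp data and independent of $L$. Solving $V(v,v)=0$ for primitive $v$ produces the two isotropic lines, i.e.\ the two derivatives, and then the task is to identify the corresponding curves on $F_L$ as knots in $S^3$. One derivative runs once over the band tied into $L$ and through the clasp on that band; since a clasp is exactly a ``two strands, one twist'' tangle, following this curve through the clasp replaces the $L$--tied arc by its $(2,1)$--cable with a sign dictated by the clasp, and an orientation bookkeeping identifies the resulting knot as $L\#-L_{2,1}$. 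The other derivative traverses the second band and is isotopic to the fixed knot $d'$ drawn in Figure~\ref{fig:CDDeriv}.

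The second, and main, step is to prove $K_L$ is slice. This is the delicate point precisely because the derivatives of $F_L$ have nonzero Arf invariant, so the slice disk cannot be obtained by ambient surgery on $F_L$; it has to be produced by hand. I would do this by exhibiting an explicit ribbon disk for $K_L$ --- equivalently a fusion presentation of $\partial F_L$ as a fusion of an unlink --- verified by a direct sequence of band moves; alternatively one may recognize $K_L$ as the satellite $P(L)$ of $L$ by a fixed winding number zero pattern $P$ and check that $P$ is a \emph{slice pattern} (it bounds a smooth disk in $(S^1\times D^2)\times[0,1]$, trivial at the far end), which forces $P(L)$, hence $K_L$, to be slice for every $L$, with $P(U)=K_U$ a recognizable ribbon knot. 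The role of the pattern $L\#-L_{2,1}$ is exactly that the clasp sign and the twisting of the second band are rigged so that this ribbon band exists: the summands $L$ and $-L_{2,1}$ are arranged to fuse and cancel against one another. Getting this numerology right --- and checking carefully that neither derivative of $F_L$ accidentally becomes slice --- is where the real work lies, and I expect this to be the main obstacle.

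Finally, for (c): by additivity of the Arf invariant and the fact that $(2,1)$--cabling has even winding number, $\Arf(d)=\Arf(L)+\Arf(L_{2,1})=\Arf(L)+\Arf(T_{2,1})=\Arf(L)$, and by the signature cabling formula recalled in the introduction, $\sigma_\omega(d)=\sigma_\omega(L)-\sigma_{\omega^2}(L)$, which is nonzero at $\omega=-1$ when $L$ is the left-handed trefoil (note $-1$ is not a root of $\Delta_d$). Reading $d'$ off Figure~\ref{fig:CDDeriv} --- it can be taken to contain a trefoil summand and the appropriate twisting --- one checks directly that $\Arf(d')\neq 0$ and that its signature function is nontrivial. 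Taking $L$ to be the left-handed trefoil then yields the first assertion of the theorem.
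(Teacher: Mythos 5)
First, note that the paper you are reading does not prove this statement at all: it is imported wholesale from \cite{Cochran-Davis:2015-1} (Theorem~1.5 together with Propositions~5.1 and~5.2 there), where $K_L$ is a completely explicit knot and the figure caption of Figure~\ref{fig:CDDeriv} records that sliceness is the content of Proposition~5.1 of that paper. Measured against that, your write-up is a plan rather than a proof. Its central step --- that the specific knot $K_L$ is slice for \emph{every} knot $L$ --- is exactly the step you defer: you name two possible strategies (an explicit ribbon presentation verified by band moves, or exhibiting $K_L$ as $P(L)$ for a fixed winding number zero slice pattern $P$) but carry out neither, and you say yourself that ``this is where the real work lies.'' Likewise the identification of the two derivatives is only asserted: the Seifert matrix whose isotropic classes you propose to solve for is never computed, the twisting and clasp data of $F_L$ are never pinned down, and the bookkeeping that turns ``the curve runs through the clasp on the $L$-band'' into the precise knot $L\#-L_{2,1}$ (rather than, say, $L\# L_{2,1}$ or a $\pm1$-twisted cable) is exactly the kind of sign-sensitive verification that cannot be waved at, since the whole theorem would collapse if the derivative were accidentally slice. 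In the source, sliceness is not a matter of ``rigging the numerology of the clasp'': roughly, $K_L$ is obtained from a fixed ribbon knot by a pair of infections using $L$ and $-L$ along curves positioned so that the two infections cancel against a slice disk, which is your second (``slice pattern'') alternative, but actually verified. Without that verification there is a genuine gap.

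On the easier part (c), your computations are essentially correct and match how the present paper later uses the result: $\Arf(L_{2,1})=0$ because the $(2,1)$-cable has even winding number, so $\Arf(d)=\Arf(L)$, and $\sigma_\omega(d)=\sigma_\omega(L)-\sigma_{\omega^2}(L)$, which is nonzero for suitable $\omega$ when $L$ is a trefoil. One small imprecision: $d'$ is not literally a knot ``containing a trefoil summand''; in the actual construction it is obtained from a trefoil by winding number $\pm1$ satellite operations with companions $L$ and $-L$, which is why it has the Arf invariant and Levine--Tristram signatures of the trefoil for every $L$ (this is the computation reproduced in the proof of Theorem~\ref{thm:kauffman}). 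So your step (c) is fine modulo that adjustment, but steps (a) and (b) --- the existence of the slice disk and the identification $d=L\#-L_{2,1}$ --- remain unproven in your proposal.
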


\begin{figure}[htb]
\labellist
\small\hair 2pt
\pinlabel $-L$ at 160 173
\pinlabel $L$ at 150 265
\pinlabel $+3$ at 113 125
\pinlabel $(a)$ at 230 -25
\pinlabel $-L$ at 490 128
\pinlabel $L$ at 515 213
\pinlabel $(b)$ at 560 -25
\pinlabel $-L$ at 890 135
\pinlabel $L$ at 840 218
\pinlabel $(c)$ at 890 -25
\endlabellist
\centering
\includegraphics[width=.3\textwidth]{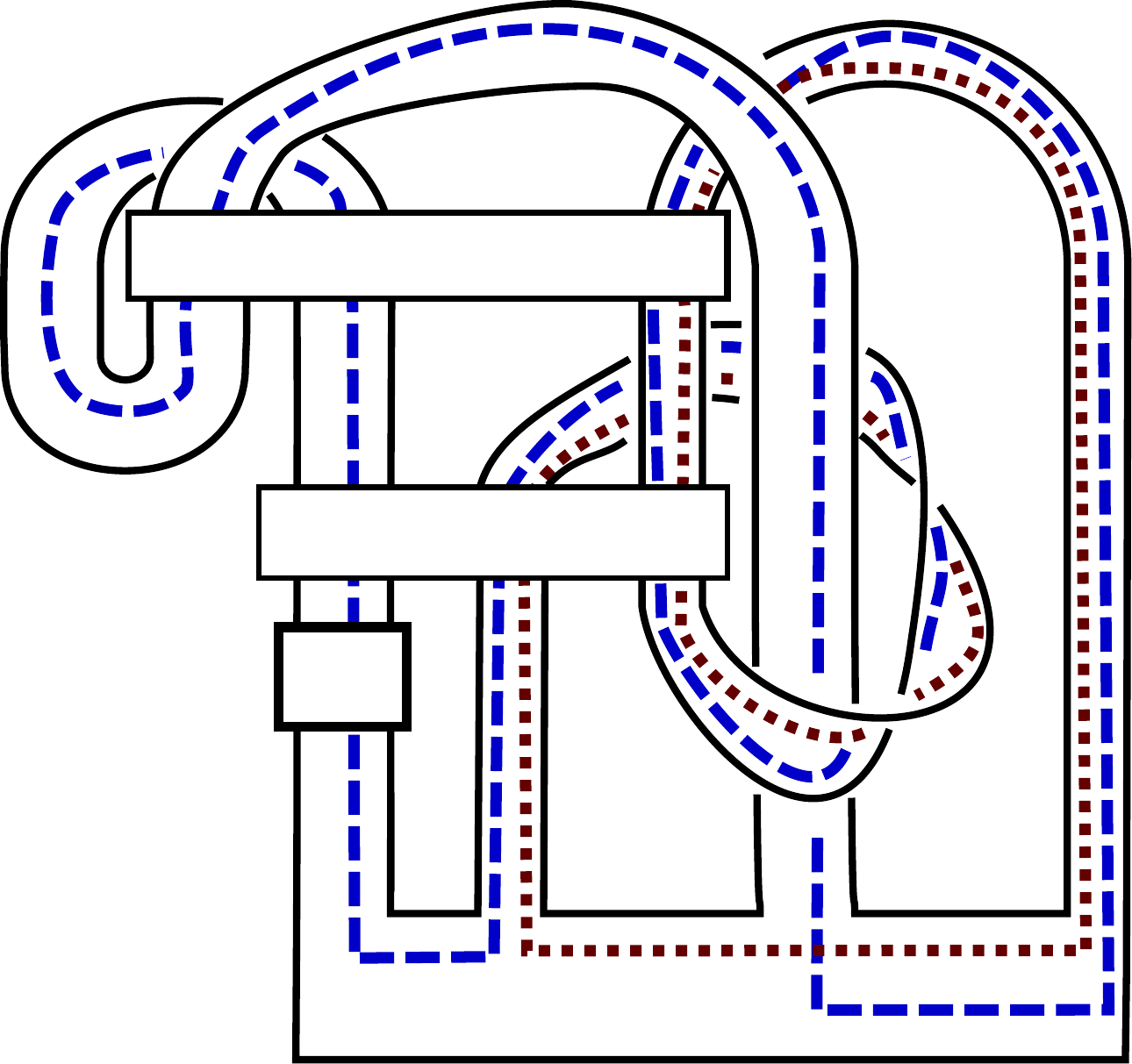}
\hspace{.05\textwidth}
\includegraphics[width=.15\textwidth]{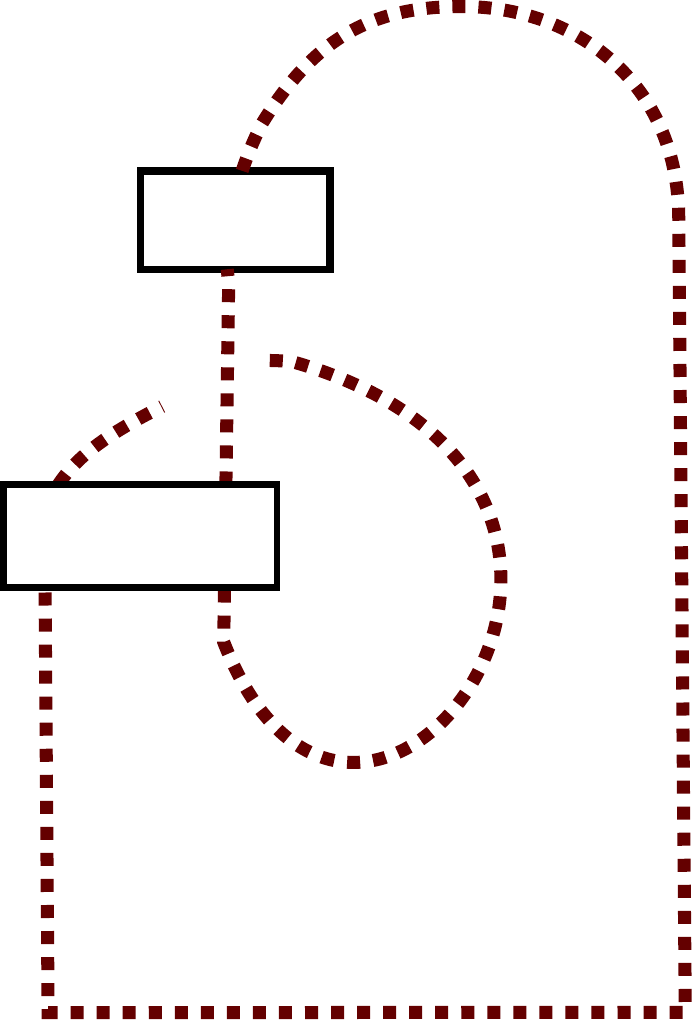}
\hspace{.05\textwidth}
\includegraphics[width=.25\textwidth]{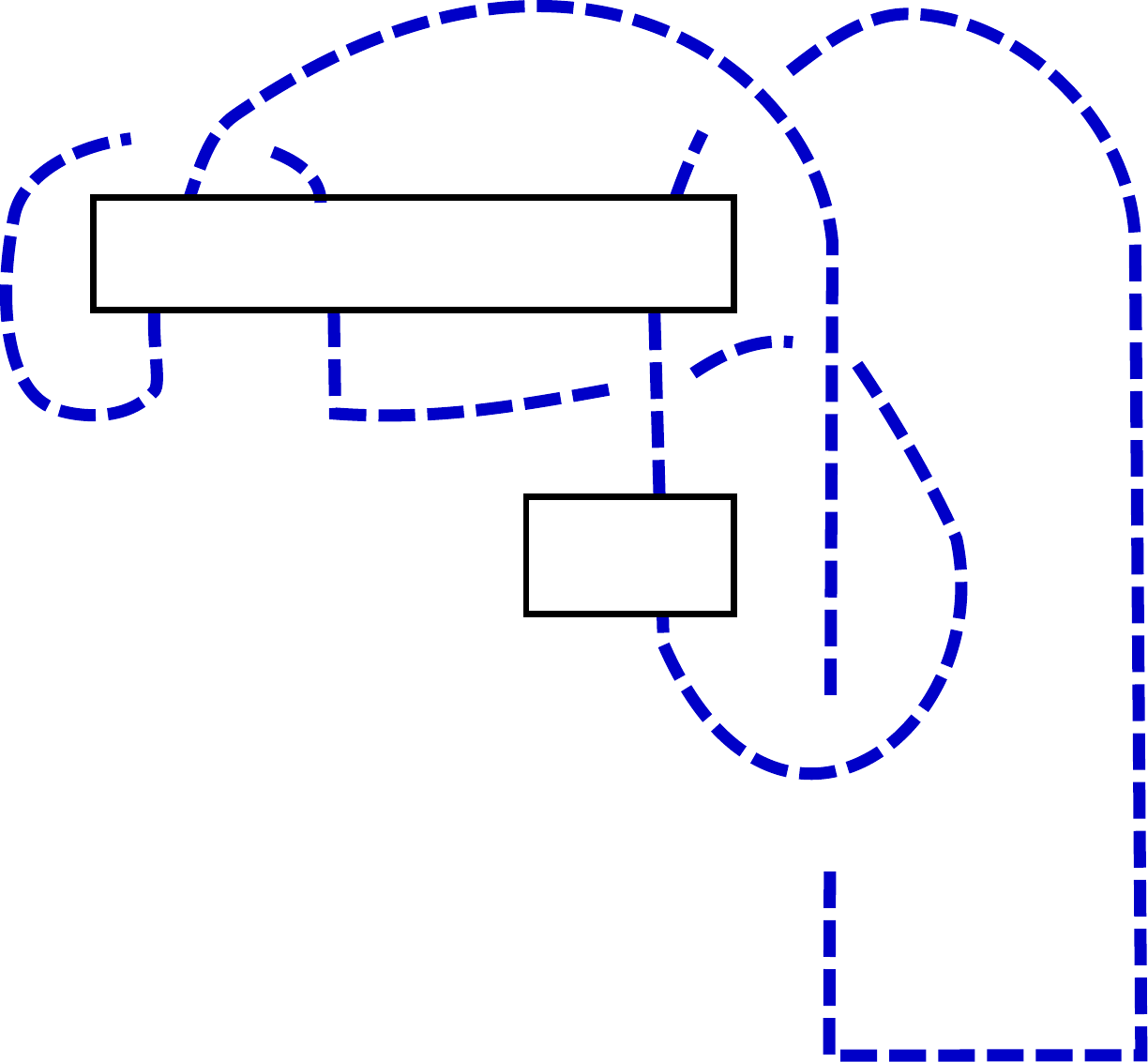}
\hspace{.05\textwidth}

\vspace{5mm}
\caption{In \cite[Proposition 5.1]{Cochran-Davis:2015-1}, the knot $K_L$, shown in (a), is proved to be slice for any choice of knot $L$. On the obvious Seifert surface one sees the derivatives $d=L\#-L_{2,1}$, shown in (b), and $d'$, shown in (c), with the same Arf invariant and Levine-Tristram signature as the left-handed trefoil knot.}\label{fig:CDDeriv}
\end{figure}
\noindent Next we recall and prove Theorems~\ref{thm:kauffman} and~\ref{thm:kauffmanprime}.

\begin{thm:kauffman} For any $n\geq 0$, there exists a slice knot $K$ bounding a genus one Seifert surface with derivative curves $d$ and $d'$ such that $\Arf(d')\neq 0$ and $d$ is nontrivial in each of the quotients $\Fn / \Fnpointfive$ and $\B_{n-1} / \B_{n+1}$.
\end{thm:kauffman}

\begin{proof} 
We will choose slice knots as in Theorem~\ref{thm:kauffmanD} and vary the choice of $L$. We first show that $d'$ has non-vanishing $\Arf$ invariant for any choice of $L$. Recall that $\Arf$ is a $\mathbb{Z}_2$-valued invariant and the effect of the satellite construction and connected sum on the $\Arf$ invariant is well understood (see for instance~ \cite[Corollary~2.3]{Cochran-Davis:2015-1}). Note that $d'$ is obtained from the right-handed trefoil $T$ by a pair of winding number $\pm 1$ satellite constructions with companions $L$ and $-L$. Thus, we have the following computation:
\begin{align*}
  \Arf(d') &= \Arf(T) + \Arf(L) + \Arf(-L)  \\
&= \Arf(T) = 1.
\end{align*}

Next, we need to show that $d$ is nontrivial in $\Fn/\Fnpointfive$ and $\mathcal{B}_{n-1}/\mathcal{B}_{n+1}$, for some choice of $L$. For $n=0$, the statement was proved by Cochran and Davis in~\cite[Section~5]{Cochran-Davis:2015-1} (see also \cite{Park:2018-1}) by choosing $L$ to be a knot with 
non-vanishing $\sigma_{\omega^2}(L) - \sigma_{\omega}(L)$ (e.g.\ the connected sum of two right-handed trefoils). Let~$n\geq 1$ and choose $L$ to be of the form~$K$ as in Theorem \ref{thm:cables-indep}. Since $\{K_{p,1}\}_{p\geq 1}$ is linearly independent in $\mathcal{F}_n/\mathcal{F}_{n.5}$ and $\mathcal{B}_{n-1}/\mathcal{B}_{n+1}$, we see that, in particular, $d =  K\# - K_{2,1}$ is nontrivial in $\mathcal{F}_n/ \mathcal{F}_{n.5}$ and in $\mathcal{B}_{n-1}/\mathcal{B}_{n+1}$, as needed. \end{proof}

\begin{thm:kauffmanprime}
There exists a slice knot $K$ with a genus one Seifert surface and derivative curves $d$ and $d'$, such that $d'$ has non-vanishing $\Arf$ invariant and $d$ is topologically slice but not $($smoothly$)$ slice.
\end{thm:kauffmanprime}

\begin{proof} Let $K$ be a slice knot from Theorem~\ref{thm:kauffmanD} where we choose $L$ to be the positive Whitehead double of the right-handed trefoil, denoted $D$. From the proof of Theorem~\ref{thm:kauffman}, we know that $d'$ has non-vanishing $\Arf$ invariant. The knot $d$ is not (smoothly) slice; this can be seen, for example, using the~$\tau$-invariant, as follows. Since $\tau(D)=1$~\cite{Livingston:2004-1}  (see also~\cite{Hedden:2007-1}), we compute that $\tau(D_{2,1})=2$ using Hom's cabling formula from~\cite[Theorem 1]{Hom:2014-2}. Since the $\tau$-invariant is additive under connected sum, $\tau(d) = \tau(D \# - D_{2,1}) = -1$. Lastly,~$d$ is topologically slice since it has trivial Alexander polynomial~\cite{Freedman:1982-2,Freedman-Quinn:1990-1,Garoufalidis-Teichner:2004-1}. \end{proof}

We end this section with an interesting corollary of Theorems~\ref{thm:kauffman} and~\ref{thm:kauffmanprime}. In a forthcoming paper we describe the knots for which condition (2) holds~\cite{Davis-Park-Ray:2018-1}.

\begin{corollary}\label{cor:handlebody}~
\begin{enumerate}
\item There exists a slice knot $K$ with a genus one Seifert surface $F$ such that there exists a topological slice disk $\Delta_{\text{top}}$ in $B^4$ for $K$ where $F\cup \Delta_{\text{top}}$ bounds a topological handlebody in $B^4$ but there is no $($smooth$)$ slice disk $\Delta$ in $B^4$ for $K$ where $F\cup \Delta$ bounds a smooth handlebody in $B^4$. 
\item For any $n\geq 0$, there exists a slice knot $K$ with a genus one Seifert surface $F$ such that there exists a smoothly embedded disk $\Delta_{n}$ in an $(n+1)$-solution $W$ for $K$ where $F\cup \Delta$ bounds a smooth handlebody in $W$ but there is no topological slice disk $\Delta$ in $B^4$ for $K$ where $F\cup \Delta$ bounds a topologically embedded handlebody in $B^4$.
\end{enumerate}
\end{corollary}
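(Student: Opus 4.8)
The plan is to obtain both parts of the corollary formally from Theorems~\ref{thm:kauffmanprime} and~\ref{thm:kauffman}, via the dictionary between derivative curves and handlebodies recalled in the introduction. That dictionary states: if $K$ has a genus one Seifert surface $F$ with derivative curves $d$ and $d'$ — and since the relevant $K$ is slice, hence algebraically slice, these are its only two derivatives up to orientation — then, in a fixed category (topological or smooth), there is a slice disk $\Delta$ for $K$ in that category with $F\cup\Delta$ bounding a handlebody of that category in $B^4$ \emph{if and only if} $d$ or $d'$ is slice in that category; the forward implication is the ambient-surgery construction recalled in the introduction, and the reverse is its converse, also recalled there. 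For part (1) I would take $K$ to be the slice knot of Theorem~\ref{thm:kauffmanprime}, so $F$ has derivatives $d,d'$ with $\Arf(d')\neq0$ and $d$ topologically but not smoothly slice. Since $d$ is topologically slice, the forward implication (topological category) yields a topological slice disk $\Delta_{\text{top}}$ for $K$ with $F\cup\Delta_{\text{top}}$ bounding a topological handlebody in $B^4$. On the other hand $\Arf(d')\neq0$ forces $d'$ to be non-slice even topologically, hence smoothly, and $d$ is non-smoothly-slice by hypothesis; so no derivative of $F$ is smoothly slice, and the reverse implication (smooth category) rules out any smooth slice disk $\Delta$ for $K$ with $F\cup\Delta$ bounding a smooth handlebody.

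For part (2) I would take $K$ to be the slice knot of Theorem~\ref{thm:kauffman} for the given $n$, so $F$ has derivatives $d,d'$ with $\Arf(d')\neq0$ and $d$ nontrivial in $\Fn/\Fnpointfive$; in particular $d\in\Fn\setminus\Fnpointfive$. Since every topologically slice knot lies in $\bigcap_m\F_m\subseteq\Fnpointfive$, the condition $d\notin\Fnpointfive$ shows $d$ is not topologically slice, and $\Arf(d')\neq0$ shows $d'$ is not topologically slice; so the reverse implication of the dictionary (topological category) rules out any topological slice disk $\Delta$ for $K$ with $F\cup\Delta$ bounding a topological handlebody in $B^4$. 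For the positive statement I would run the relative version of the ambient-surgery construction: since $d\in\Fn$, it bounds a smoothly embedded disk $\Delta_d$ in an $n$-solution $V$, and performing ambient surgery on $F$ along $d$ using $\Delta_d$ and $V$ in place of a slice disk in $B^4$ — the framings match because $d$ is a derivative — produces a compact smooth $4$-manifold $W$ with $\partial W=S^3$, together with a smoothly embedded disk $\Delta_n\subset W$ bounded by $K$; gluing a thickened copy of $\Delta_d$ to $F\times I$ exhibits $F\cup\Delta_n$ as the boundary of a smooth (genus one) handlebody in $W$, exactly as in the classical case. The remaining point is that $W$ is an $(n+1)$-solution for $K$: because $d$ is a derivative, a meridian of $d$ becomes a commutator in $\pi_1(W\setminus\Delta_n)$, so the surfaces witnessing $n$-solvability of $V$ have fundamental groups landing in $\pi_1(W\setminus\Delta_n)^{(n+1)}$, and the conditions on $H_1$, $H_2$, and the intersection form follow from those of $V$ together with $H_*(B^4)=0$ — this is exactly the computation underlying Proposition~\ref{prop:infection}.

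I expect the main obstacle to be this last step of part (2): verifying carefully that the $4$-manifold $W$ obtained by capping the derivative with an $n$-solution (rather than with a slice disk) genuinely satisfies every clause of Definition~\ref{defn:n-solvable} for $(n+1)$-solvability, and simultaneously that the handlebody bounded by $F\cup\Delta_n$ survives the construction. Both are routine variations on standard arguments (cf.\ \cite{Cochran-Orr-Teichner:2003-1} and the proof of Proposition~\ref{prop:infection}), but the bookkeeping with fundamental groups and the derived series requires care. Everything else — all of part (1), and the non-existence halves of both parts — is then a formal consequence of Theorems~\ref{thm:kauffmanprime} and~\ref{thm:kauffman} and the dictionary above, and this is carried out in the text following the corollary statement.
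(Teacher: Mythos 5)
Your proposal is correct and follows essentially the same route as the paper: both parts are deduced from Theorems~\ref{thm:kauffmanprime} and~\ref{thm:kauffman} via the derivative--handlebody dictionary, with the positive half of (1) given by ambient surgery along the topological slice disk for $d$ (gluing a thickened $\Delta_d$ to $F\times I$ after matching the $0$-framing with the surface framing), and the positive half of (2) by capping $d$ with a disk in an $n$-solution $W'$ and promoting $W=(S^3\times I)\cup W'$ to an $(n+1)$-solution using exactly the mechanism you identify — the meridian of $d$ generates $H_1(W'\setminus\Delta_d)$ and dies in $H_1(W\setminus\Delta)$, shifting the derived series conditions by one. The negative halves likewise match the paper's preliminary observation that a handlebody forces a slice derivative in the relevant category, combined with $\Arf(d')\neq 0$ and, for (2), the fact that $d\notin\Fnpointfive$ excludes topological sliceness.
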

We remark that results similar to $(2)$ hold for $n$-positons and $n$-negatons via an identical proof.  

\begin{proof} As a preliminary observation, observe that if $K$ is a slice knot with a genus one Seifert surface $F$ and a slice disk $\Delta$ such that $F\cup \Delta$ bounds a smooth handlebody in $B^4$, then there exists a derivative curve on $F$ which is itself a slice knot.  Indeed, a meridional disk for that handlebody is bounded by such a derivative. Thus, if $K$ has no such derivative, there is no slice disk $\Delta$ such that $F\cup \Delta$ bounds a smooth handlebody in $B^4$. By the same argument, if $K$ has no topologically slice derivative, there is no topological slice disk $\Delta_{\text{top}}$ for $K$ such that $F\cup \Delta_{\text{top}}$ bounds a topological handlebody in $B^4$.

\textit{Proof of $(1)$:} Choose a slice knot $K$ from Theorem~\ref{thm:kauffmanprime}; in particular $K$ has a genus one Seifert surface $F$ with derivative curves $d$ and $d'$ where $\Arf(d')\neq 0$ and $d$ is topologically slice but not smoothly slice.  Consider a decomposition of $B^4 = (S^3 \times I) \cup (B^4)'$ and a smooth embedding $i \colon F \times I \hookrightarrow S^3 \times I \subset B^4$. The image $S:=i(F\times I)$ is an embedded genus two handlebody with boundary consisting of the union of the Seifert surface $F=i(F\times \{0\})$, an annulus $i(K\times I)$, and a pushed in Seifert surface $-F=i(F\times \{1\})$. Since the derivative $d$ is topologically slice, there exists a topological slice disk $\Delta_{d}$ in $(B^4)'$. Using the fact that $\Delta_{d}$ is locally flat, we may find a topological normal bundle. The trivialization of this normal bundle restricts to give the $0$-framing on the knot $d$ in $S^3\times \{1\}$. Since $d$ is a derivative, the surface framing induced by $F$ is the same as the $0$-framing on $d$. Thus, an annular neighborhood of $d$ in $F\times \{1\}$ extends to a copy of $\Delta_{d} \times I$ in $B^4’$. 

Glue $S$ and $\Delta_{d} \times I$ along this annular neighborhood of $d$ to produce a genus one topological handlebody $S'$; observe that $\partial{S'} = F \cup A \cup \Delta'_{\text{top}}$ where $\Delta'_{\text{top}}$ is a locally flat embedded disk in $(B^4)'$. By taking $\Delta_{\text{top}} = A \cup \Delta'_{\text{top}}$ we get the desired result.

\textit{Proof of $(2)$:} Choose a slice knot $K$ from Theorem~\ref{thm:kauffman}; that is, $K$ has a genus one Seifert surface $F$ and derivative curves $d$ and $d'$ such that $\Arf(d')\neq 0$ and $d$ is nontrivial in~$\F_n / \F_{n.5}$. Let $W'$ be an $n$-solution for $d$ and let $\Delta_{d}$ be a smoothly embedded disk in~$W'$ that satisfies the conditions from Definition~\ref{defn:n-solvable}. Consider $W := (S^3 \times I) \cup W'$ and a smooth embedding~$i \colon F \times I \hookrightarrow S^3 \times I \subset W$. We will show that $W$ is an $(n+1)$-solution for $K$. First construct a smoothly embedded disk $\Delta'$ in~$W'$ as follows. Let $F'$ be the image $i(F\times \{1\})$ embedded in the boundary of $W'$. Since $d$ bounds the smoothly embedded disk $\Delta_{d}$, we can cut open $F'$ along $d$ and glue on two copies of $\Delta_{d}$ to get a disk $\Delta'$ in $W'$. Then we can find a smoothly embedded disk, denoted by $\Delta$, for $K$ in $W$ by gluing together the image $i(K\times I)$ and $\Delta'$.  Similarly to the proof of $(1)$ we see that $\Delta \cup F$ bounds the handlebody obtained by gluing together $i(F\times[0,1])$ and $\Delta_d\times[0,1]$.  We still must show that $\Delta\subset W$ satisfies the definition of an $n$-solution (Definition \ref{defn:n-solvable}).  

 By thickening $\Delta_{d}$ we see that there is a smooth embedding
\[
\left(W' \setminus \Delta_{d} \times I\right) \subset \left(W' \setminus \Delta_{d} \times \{0\} \sqcup \Delta_{d} \times \{1\}\right) \subset \left(W \setminus \Delta\right).
\]
We know that $H_2(W) = H_2(W')$ has a basis consisting of $2k$ embedded, connected, compact, oriented surfaces $L_1,\dots ,L_k,D_1,\dots,D_k$ in the exterior of the disk $\Delta_{d}$; hence these lie in the exterior of $\Delta$. These surfaces have trivial normal bundles and satisfy the conditions of Definition~\ref{defn:n-solvable}. It only remains to show that $\pi_1(L_i) \subset \pi_1(W\setminus \Delta)^{(n+1)}$ and~$\pi_1(D_i) \subset \pi_1(W\setminus \Delta)^{(n+1)}$ for all $i=1,\dots,k$.  Observe that $H_1(W'-\Delta_d)\cong \Z$ is generated by the meridian of $d$. As the meridian of $d$ is nullhomologous in the exterior of $K$, we see that~$H_1(W'-\Delta_d)\to H_1(W-\Delta)$ is the zero homomorphism, and~$\pi_1(W'-\Delta_d)\subset \pi_1(W-\Delta)^{(1)}$. The functoriality of the derived series implies that 
$$\pi_1(L_i) \subset \pi_1(W'\setminus \Delta_d)^{(n)} \subset \left(\pi_1(W\setminus \Delta)^{(1)}\right)^{(n)}\subset \pi_1(W\setminus \Delta)^{(n+1)}.$$  Similarly, $\pi_1(D_i) \subset \pi_1(W\setminus \Delta)^{(n+1)}$, completing the proof.
\end{proof}

\bibliographystyle{amsalpha}
\renewcommand{\MR}[1]{}
\bibliography{research}
\end{document}